\DeclarePairedDelimiter{\prt}{(}{)}
\newcommand \commentout[1] {}
\newcommand{\partialt}[1]{\dfrac{\partial#1}{\partial t}}
\newcommand{\partialx}[1]{\dfrac{\partial#1}{\partial x}}
\DeclareMathAlphabet{\mathup}{OT1}{\familydefault}{m}{n}
\newcommand{\dx}[1]{\mathop{}\!\mathup{d} #1}
\theoremstyle{plain}
\newtheorem{thm}{Theorem}[section]
\theoremstyle{remark}
\newcommand{\ie}{\emph{i.e.}}
\newcommand{\cf}{\emph{cf.}\;}
\newcommand{\sign}{\mathrm{sign}}
\newcommand{\grad}{\nabla}
\renewcommand{\div}{\nabla\cdot}
\newcommand{\R}{\mathbb{R}}
\title{
An asymptotic preserving scheme for a tumor growth model of porous medium type}
\author{Noemi David\thanks{Sorbonne Universit{\'e}, Inria, CNRS, Universit\'{e} de Paris, Laboratoire Jacques-Louis Lions UMR7598, F-75005 Paris. Email: noemi.david@sorbonne-universite.fr.} \thanks{Dipartimento di Matematica, Universit\'a di Bologna, 40126 Bologna, Italy.}
\and
Xinran Ruan\thanks{DISMA
Dipartimento di Scienze Matematiche
"Giuseppe Luigi Lagrange", Politecnico di Torino, 10129 Torino, Italy. Email: xinran.ruan@polito.it.}}
\begin{document}
\maketitle
\begin{abstract}
Mechanical models of tumor growth based on a porous medium approach have been attracting a lot of interest both analytically and numerically. In this paper, we study the stability properties of a finite difference scheme for a model where the density evolves down pressure gradients and the growth rate depends on the pressure and possibly nutrients. Based on the stability results, we prove the scheme to be asymptotic preserving (AP) in the incompressible limit.

Numerical simulations are performed in order to investigate the regularity of the pressure. We study the sharpness of the $L^4$-uniform bound of the gradient, the limiting case being a solution whose support contains a bubble which closes-up in finite time generating a singularity, the so-called focusing solution.
\end{abstract}\vskip .4cm
\begin{flushleft}
    \noindent{\makebox[1in]\hrulefill}
\end{flushleft}
	2010 \textit{Mathematics Subject Classification.} 35K57; 35K65; 35Q92; 65M06; 65M12; 
	\newline\textit{Keywords and phrases.} porous medium equation; finite difference method; incompressible limit; asymptotic preserving scheme; focusing solution; Hele-Shaw problem.\\[-2.em]
\begin{flushright}
    \noindent{\makebox[1in]\hrulefill}
\end{flushright}
\vskip 0.4cm
\section{Introduction}
We consider a model of tumor growth describing the evolution of the cell population density $n(x,t)$ through a porous medium equation with a source,
 \begin{equation}\label{eq: n}
             \partialt n - \div(n \nabla p) = n G(p), \quad \qquad x \in \R^d, t>0,
    \end{equation}
    where $p$ is the internal pressure of the tumor, defined by the law of state
  \begin{equation}\label{eq: pressure law}
    p= n^\gamma, \qquad  \gamma > 1.
\end{equation}   
The non-linearity and degeneracy of the diffusion term bring several difficulties to the numerical analysis of the model, and many numerical schemes have been proposed in the literature, \cf \cite{LTWZ, LTWZ19, Monsaingeon16}. 
In this paper, we investigate the properties of the solutions to Eq. \eqref{eq: n}, using the following upwind scheme
\begin{equation*}
    \frac{d}{dt}n_i=\frac{n_{i+1/2} q_{i+1/2} -n_{i-1/2} q_{i-1/2}}{\Delta x} +n_i G(p_i), \qquad
  \text{ with }\quad  q_{i+1/2}= \frac{p_{i+1}-p_i}{\Delta x}.
\end{equation*}
On the one hand, the simplicity of the scheme allows us to prove analytical properties which do not apply to more complex ones. We prove stability results and the asymptotics preserving (AP) property of the scheme as $\gamma\rightarrow\infty$. On the other hand, despite its simplicity, we perform numerical tests that show the good efficiency of the scheme for different reaction terms $G$ as well as for $\gamma\gg 1$.

We are also interested in analysing numerically the regularity of the so-called \textit{focusing solution} of Eq. \eqref{eq: n}, whose support is initially contained outside of a compact set, see for instance \cite{AG}. Due to the degeneracy of the diffusion, the inner hole closes up in finite time and singularities occur due to this topological change. In particular, we perform numerical tests to study the blow-up of the $L^p$-norms of the pressure gradient, which are uniformly (with respect to $\gamma$) bounded for $p\le 4$, as recently proved in \cite{DP}. This regularity is actually optimal, and the focusing solution represents the limiting case since the $L^p$-norms of its gradient blow up for $p>4$ as $\gamma\rightarrow\infty$. Our aim is to obtain a numerical verification of the study of the limiting exponents from \cite{DP}. 

\paragraph{Motivations.}
Models as Eq. \eqref{eq: n}, possibly including advection terms or coupled with a second equation, have been largely applied to the description of tissue and tumor growth. They are based on the  mechanical aspects that drive the cell motion and proliferation. Describing the fact that the cells move down pressure gradients, the flow velocity in Eq. \eqref{eq: n} is given by Darcy's law, namely $\Vec{v}=-\nabla p$. 

Besides driving the cells movement, the pressure also controls the cell proliferation through an inhibitory effect, since the division rate is lower at higher pressure values. Therefore, we make the following assumption on the growth rate $G$: there exist positive constants $\alpha$ and $p_H$ such that
\begin{equation}
    \label{eq: assumptions G}
   G'(p)\leq - \alpha, \qquad G(p_H)=0,
\end{equation}
where $p_H$ represents the so-called \textit{homeostatic pressure}, namely the lowest level of pressure that prevents cell multiplication due to contact inhibition. 

Later in the paper, we also consider an extension of the model where $G$ depends both on the pressure and the concentration of a nutrient (for instance, oxygen or glucose), denoted by $c(x,t)$. In this case, Eq. \eqref{eq: n} would be coupled with an equation on $c$ that depends both on the environmental conditions (\textit{in vitro} or \textit{in vivo}) and on the stage of the tumor development (\textit{avascular} or \textit{vascular}). We refer the reader to \cite{PTV} for the formulation of the Hele-Shaw problem with nutrient and its traveling wave solutions.
 
As mentioned above, the density actually satisfies a porous medium type equation, which can be directly recovered combining the pressure law, Eq. \eqref{eq: pressure law}, and Eq. \eqref{eq: n}, namely
\begin{equation*}
    \partialt n = \frac{\gamma}{\gamma+1} \Delta n^{\gamma+1} + n G(p).
\end{equation*}
As the solution of the classical Porous Medium Equation (PME), $n$ evolves with finite speed of propagation, since the diffusion term degenerates when $n=0$. Thus, if the initial data has compact support, the solution remains compactly supported at any time and exhibits a moving front, which is the interface that separates $\{n>0\}$ and $\{n=0\}$.

As shown in \cite{PQV}, as $\gamma \rightarrow \infty$ the solution of Eq. \eqref{eq: n} converges to a solution of the Hele-Shaw free boundary problem defined on the set $\Omega(t):=\{x, \ p_\infty(x,t)>0\}$, in which the limit pressure satisfies an elliptic equation.
The so-called \textit{incompressible limit} of Eq. \eqref{eq: n} has attracted a lot of interest in the last decades and a vast literature on the topic is now available, \cf \cite{PQV, DHV, KP17}. The Hele-Shaw limit has also been studied for several extension of the model at hand, we refer the reader to \cite{DP, activemotion, PV, BPPS, DebiecEtAl2021} for models including nutrients, viscosity, active motion or a second species of cells. 

The complete proof can be found in \cite{PQV, KP17}, while here we present a formal argument to explain the link between the compressible model and the free boundary formulation.
Upon multiplying Eq. \eqref{eq: n} by $\gamma n^{\gamma-1}$, we recover the equation satisfied by the pressure, which reads
\begin{equation}\label{eq: p} 
    \partialt p = \gamma p (\Delta p + G(p)) + |\nabla p|^2.
\end{equation}
Then passing formally to the limit $\gamma\rightarrow \infty$ we find the \textit{complementarity relation}
\begin{equation*}
    p_\infty (\Delta p_\infty + G(p_\infty))=0.
\end{equation*}
This implies that the limit pressure has to satisfy the elliptic equation $-\Delta p_\infty =G(p_\infty)$ in the tumor region $\Omega(t)$.

\paragraph{Our contribution.}
\mbox{}
\vskip0.2cm
\noindent{$\circ$ \textit{Asymptotic preserving property.}}
In this paper, we show that, as $\gamma\rightarrow\infty$, the aforementioned scheme is asymptotic preserving and the solution converges to a solution of the following finite difference equation
\begin{equation*}
       p_{i} (\delta_x^2p_{i} +G(p_{i}))=0.
\end{equation*} 
\vskip0.2cm
\noindent{$\circ$ \textit{Aronson-B\'enilan estimate.}}
The derivation of the complementarity relation in the continuous case is deeply related to a lower bound on the quantity $w:=\Delta p + G(p)$, namely $w \gtrsim -\frac{C}{\gamma t}$, \cf \cite{PQV}. This bound is an adaptation of the Aronson-B\'enilan (AB in short) estimate, which is a well-known and powerful tool in the theory of porous medium equations. 

It is our aim to recover a discrete version of this lower bound for our scheme. This purpose has been already addressed in the literature, in particular we refer the reader to \cite{Monsaingeon16} for a tracking front scheme for which the author proves the Aronson-B\'enilan estimate for the classical porous medium equation (namely, with no reaction terms), and for any $\gamma>1$. Unlike \cite{Monsaingeon16}, we keep a fixed grid and show that the AB estimate holds also for a restricted class of pressure-penalized growth rates $G=G(p)$, only in the cases $\gamma=1$ and $\gamma \approx \infty$ which is our interest for the Hele-Shaw limit. To the best of our knowledge, we are the first to prove the discrete version of the AB estimate for a nontrivial pressure-dependent reaction term in the porous medium equation. It is not the main goal of this paper to prove the convergence of the scheme as $\Delta x \rightarrow 0$, nevertheless, we want to point out that this estimate could be extremely useful in this direction.
\vskip0.2cm 
\noindent{$\circ$ \textit{Focusing solution.}}
The solutions of Eq. \eqref{eq: n} exhibit different kind of singularities in the incompressible limit $\gamma\rightarrow\infty$. For instance, the limit density $n_\infty$ shows jump discontinuities across the boundary of the tumor region $\partial \Omega(t)$, while the pressure $p_\infty$ can develop singularities in time. In fact, when a new saturated region is generated outside $\Omega(t)$, \textit{i.e.} when $n_\infty(\cdot,s)$ becomes 1 in a set of positive measure contained outside the original tumor region, for some $s>t$, the pressure instantaneously becomes positive in the same set, 
according to the relation $p_\infty(1-n_\infty)=0$.
Moreover, time discontinuities can also appear when the set $\Omega(t)$ undergoes certain topological changes, for instance when the support contains a hole which closes up at time $t=T^*$, which is called \textit{focusing time}. This particular solution is referred to as  \textit{focusing solution}. The hole filling problem has attracted a lot of attention since it represents the limiting case for several regularity results. For instance, in \cite{AG}, Aronson and Graveleau use the focusing solution to show that the H\"older continuity of the pressure gradient is optimal, for dimension $d\geq 2$. In fact, the pressure gradient blows up at the focusing time $T^*$.

In \cite{DP}, the authors prove that the $L^4$-norm of $\nabla p$ is uniformly bounded with respect to $\gamma$. Then, they show that this uniform estimate is optimal using the focusing solution as a counter-example. Through an asymptotic argument on a radial solution, they compute that 4 is the highest possible order of integrability for the gradient of the pressure of the Hele-Shaw problem. 
One of the main interests of this paper is to numerically investigate and confirm this property of the focusing solution. To this end, we perform 2-dimensional simulations with initial data given by the characteristic function of a spherical shell. The results obtained by computing the $L^p$-norms of the pressure gradient clearly show its singularity at the focusing time and confirm the worsening of the blow-up as the exponents becomes greater than 4. 
 At the best of our knowledge, there are no numerical inspections of this sharpness result in the literature, although the focusing solution has been deeply studied both analytically and numerically \cite{AG, AGV, ARONSON2016}.

\paragraph{Previous works.}  
The numerical simulation of the tumor growth model \eqref{eq: n} is challenging in two aspects, the lack of regularity of solutions near the free boundary, which is a common difficulty of porous medium equations, and the stiffness appearing in the Hele-Shaw limit $\gamma\to\infty$.

The numerical study of the porous medium equations lasts for decades and a variety of algorithms have been proposed. 
An early study of the finite difference method can be found in \cite{GraveleauJamet}. 
Further studies on the finite difference method include the interface tracking algorithm, \cf \cite{BenedettoHoff, Monsaingeon16}, which works perfectly in 1D by separating the computation of the free boundaries and the solutions inside the support, 
 a WENO scheme, \cite{LiuShuZhang}, which eliminates the oscillations around the free boundaries,  
 and so on. 
There is also an extensive study on the finite volume method, \cf \cite{BessemoulinFilbet, EymardGalloutHerbinMichel} and various finite element methods, including 
an early study of the convergence analysis, \cite{Rose}, 
the locally discontinuous Galerkin method, \cite{ZhangWu}, and the adaptive mesh, \cite{BainesHubbardJimackJones, BainesHubbardJimack, NgoHuang}. 
The relaxation scheme, which is originally designed for conservation laws, \cite{JinXin}, can be extended to porous medium equations successfully as well, \cite{CavalliNaldiPuppoSemplice, NaldiPareschiToscani}.
Besides the methods on Euler coordinates, there is an increasing interest in designing Lagrangian methods, see for example \cite{BuddCollinsHuangRussell, CarrilloRanetbauerWolfram, Carrillo2, Carrillo3, LiuWang}.  
Despite the extensive study of the numerical methods for porous medium equations, the algorithm preserving the free boundary limit is rarely studied. 
A fully implicit solver is generally needed. 
A recent work shows that one way to avoid a fully implicit scheme is to construct a semi-implicit scheme by combining the relaxation scheme with the prediction-correction formulation, \cf \cite{LTWZ}. 

\paragraph{Contents of the paper.} 
The semi-discrete scheme and the analysis of its properties are presented in Section \ref{sec: semi-discrete scheme}. We prove stability providing a priori estimates on the main quantities and their derivatives, Subsec.~\ref{sec: stability}. Let us point out that these estimates are uniform with respect to $\gamma$, and therefore stability holds for any $\gamma>1$. Then, we prove the asymptotic preserving property of the scheme, Subsec.~\ref{sec: AP} and recover a discrete version of the Aronson-B\'enilan estimate for a nontrivial reaction term,
Subsec.~\ref{sec: AB}. We introduce the implicit scheme in Section~\ref{sec: implicit}, and we extend the uniform a priori estimates previously derived on the semi-discrete scheme. The solvability of the scheme is proven in detail in Appendix~\ref{appendix_existence}. We report the results of several numerical simulations in Section~\ref{sec: numerical simulations}. We test the accuracy of the scheme using the explicit Barenblatt profile, and we compare the numerical solutions with $\gamma$ large to the exact solutions of the \textit{in vitro} and \textit{in vivo} model with nutrients. Moreover, we apply our scheme to a two-species model of tumor growth, where both populations evolve under a porous medium mechanics.
Finally, we report the results of the 2-dimensional simulations on the focusing solution which confirm the sharpness of the $L^4$-uniform bound of $\nabla p$. 
\section{The semi-discrete scheme}\label{sec: semi-discrete scheme} 
To better focus on the analysis of the upwind discretization in space, we start from the semi-discrete scheme. 
For simplicity, only the one dimensional problem is considered. The scheme for the multi-dimensional problem can be analyzed similarly.

We suppose the domain is a closed interval $\Omega = [-X, X]$.  We choose a uniform mesh with mesh size $\Delta x = \frac{2X}{M_x}$, where $M_x$ is the number of sub-intervals. 
Denote $n_i(t)$ and $p_i(t)$ to be the numerical approximations of $n(t, x_i)$ and $p(t,x_i)$, where $x_i=i\Delta x$ for $i\in I =\{-M_x, -M_x+1, \dots, M_x\}$.
Then the semi-discrete finite difference scheme for Eq.~\eqref{eq: n} is 
\begin{equation}\label{eq: dtni}
    \frac{d}{dt}n_i=\frac{n_{i+1/2} q_{i+1/2} -n_{i-1/2} q_{i-1/2}}{\Delta x} +n_i G_i, \quad i\in I,
\end{equation}
with 
\begin{equation}\label{eq: q term}
    q_{i+1/2}= \frac{p_{i+1}-p_i}{\Delta x}, \quad
    G_i=G(p_i).
\end{equation}
The Neumann boundary condition is applied so that $n_{\scaleto{-M_x-1}{5pt}} = n_{\scaleto{-M_x+1}{5pt}}$ and $n_{\scaleto{M_x+1}{5pt}} = n_{\scaleto{M_x-1}{5pt}}$.
We define $n_{i+\frac{1}{2}}$ in the upwind manner
\begin{equation}\label{eq: nionehalf}
    n_{i+1/2} =\begin{cases}
    n_i, \quad &\text{ if } \; q_{i+1/2}<0,\\
    n_{i+1}, \quad &\text{ if } \; q_{i+1/2}>0.
    \end{cases}
\end{equation}
Multiplying Eq.~\eqref{eq: dtni} by $\gamma n_i^{\gamma-1}$  we recover the finite difference equation on the pressure
\begin{equation*} 
    \frac{d}{dt}p_i=\gamma n_i^{\gamma -1}\left(\frac{n_{i+1/2}-n_i}{\Delta x}q_{i+1/2}+\frac{n_i-n_{i-1/2}}{\Delta x}q_{i-1/2}\right)+\gamma p_i \left(\delta_x^2p_i +G_i\right),
\end{equation*}
where
\begin{equation*}
    \delta_x^2 p_i := \frac{q_{i+1/2}-q_{i-1/2}}{\Delta x}.
\end{equation*}

\paragraph{Assumptions.}
We assume that there exists positive constants $C$ and $p_H$ (homeostatic pressure) such that
\begin{equation}\label{eq: assumptions initial data}\begin{split}
 0\leq p_i^0\leq p_H, \qquad \Delta x\sum_i |n_i^0|  \leq C \qquad \Delta x\sum_i |p_i^0|  \leq C,\\
 \Delta x\sum_i |n^0_{i+1}-n^0_i| \leq C, \qquad \Delta x\sum_i \left|\left(\frac{d}{dt}n_i\right)^0\right|\leq C.
 \end{split}
\end{equation}
\subsection{Stability results}\label{sec: stability}
Now we prove the positivity preserving property of the semi-discrete Scheme~\eqref{eq: dtni}, and the \textit{a priori} estimates that imply stability for any $\gamma>1$.
\begin{thm}[A priori estimates] \label{thm: a priori}
Let $T>0$ and $n_H:=p_H^{1/\gamma}$. Then, for all $0\leq t\leq T$ it holds
\begin{itemize}
    \item[(i)]  $0\leq n_i(t)\leq n_H, \; 0\leq p_i(t)\leq p_H$, $\forall i$,
    \item[(ii)] $
   \Delta x  \sum_i |n_i(t)|\leq C(T),$ $\;
    \Delta x  \sum_i |p_i(t)| \leq C(T)$,
    \item[(iii)]  $\Delta x \sum_i  |n_{i+1}(t)-n_i(t)|\leq C(T),$
    \item[(iv)]   $\Delta x  \sum_i \left|\frac{d}{dt}n_i(t)\right| \leq C(T)$, \; $\int_0^T \Delta x  \sum_i \left|\frac{d}{dt}p_i\right| \dx{t}\leq C(T)$,
    \item[(v)] $\int_0^T  \Delta x \sum_i\left|\frac{p_{i+1}-p_i}{\Delta x}\right|^2\dx{t} \leq C(T).$
\end{itemize}
for some positive constants $C(T)$ depending on $T$ and independent of $\gamma$.
\end{thm}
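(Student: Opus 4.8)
The plan is to establish items (i)--(v) in that order, since each feeds the next, while tracking throughout that every constant can be taken independent of $\gamma$. The starting observation, used everywhere, is that the upwind choice \eqref{eq: nionehalf} amounts to $n_{i+1/2}=\max\{n_i,n_{i+1}\}$ (and likewise $n_{i-1/2}=\max\{n_{i-1},n_i\}$), because $q_{i+1/2}>0$ is equivalent to $n_{i+1}>n_i$. Expanding the flux with this identity rewrites \eqref{eq: dtni} as $\frac{d}{dt}n_i=a_i n_{i-1}+b_i n_i+c_i n_{i+1}$ with $a_i,c_i\geq 0$, a quasi-positive (Metzler) system whose nonnegative cone is forward invariant; hence $n_i\geq 0$ and $p_i=n_i^\gamma\geq 0$. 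For the upper bound I would follow $N(t):=\max_i n_i(t)$: at an index $i_0$ realizing the maximum one has $q_{i_0+1/2}\leq 0\leq q_{i_0-1/2}$, so the upwind fluxes collapse and $\frac{d}{dt}n_{i_0}=n_{i_0}\left(\delta_x^2 p_{i_0}+G_{i_0}\right)$ with $\delta_x^2 p_{i_0}\leq 0$; since $G$ is decreasing with $G(p_H)=0$, the right-hand side is $\leq 0$ as soon as $n_{i_0}\geq n_H$, and a Dini-derivative argument keeps $N(t)\leq n_H$. This gives (i). For (ii) I sum the conservative part over $i$: the fluxes telescope and vanish through the Neumann condition, leaving $\frac{d}{dt}\left(\Delta x\sum_i n_i\right)=\Delta x\sum_i n_i G_i\leq G(0)\,\Delta x\sum_i n_i$, which Grönwall closes with the $\gamma$-independent factor $e^{G(0)T}$; the bound on $\sum_i p_i$ follows from $p_i\leq n_H^{\gamma-1}n_i$ with $n_H^{\gamma-1}=p_H^{(\gamma-1)/\gamma}\leq\max\{1,p_H\}$ uniformly in $\gamma$.

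For the total-variation bound (iii) I would set $w_i:=n_{i+1}-n_i$, differentiate the scheme into an evolution for the $w_i$, and use the monotonicity of the upwind flux (the same $a_i,c_i\geq 0$ structure) to derive a discrete TVD inequality $\frac{d}{dt}\left(\Delta x\sum_i|w_i|\right)\leq C\,\Delta x\sum_i|w_i|$, the reaction contribution being absorbed through $\|G\|_\infty$ and the quantity itself; Grönwall then finishes, seeded by the assumed initial total variation. I note that if only independence of $\gamma$ (and not of $\Delta x$) is required, the crude estimate $|n_{i+1}-n_i|\leq n_{i+1}+n_i$ already yields (iii) directly from (i)--(ii).

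For the time-derivative bound in (iv) I would differentiate the scheme once more in time and estimate $\frac{d}{dt}\left(\Delta x\sum_i\left|\frac{d}{dt}n_i\right|\right)=\Delta x\sum_i\sign\!\left(\frac{d}{dt}n_i\right)\frac{d^2}{dt^2}n_i$ by a discrete Kato inequality: the flux terms telescope, and the linearized reaction produces $\gamma p_i G'(p_i)\left|\frac{d}{dt}n_i\right|$, which by $G'\leq-\alpha<0$ has the favorable sign and may be dropped, so what remains is $\leq C\,\Delta x\sum_i\left|\frac{d}{dt}n_i\right|$ and Grönwall (seeded by the assumed initial bound) closes it $\gamma$-uniformly. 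The space--time bound on $\frac{d}{dt}p_i$ I would get from the sign decomposition $\frac{d}{dt}p_i=\gamma p_i\left(\delta_x^2 p_i+G_i\right)+R_i$, where the upwind corrections $R_i\geq 0$ (again using $n_{i\pm 1/2}=\max$): writing $\left|\frac{d}{dt}p_i\right|=\frac{d}{dt}p_i+2\left(\frac{d}{dt}p_i\right)^-$, the time integral of $\sum_i\frac{d}{dt}p_i$ telescopes to $\sum_i\left(p_i(T)-p_i^0\right)$, controlled by (i)--(ii), while the negative part is dominated by $\gamma p_i\left(\delta_x^2 p_i\right)^-$ and must be absorbed using the energy identity below together with the one-sided (Aronson--Bénilan) control proved in Subsec.~\ref{sec: AB}. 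Finally, for (v) I multiply \eqref{eq: dtni} by $p_i$ and sum by parts, using $p_{i+1}-p_i=\Delta x\,q_{i+1/2}$, to obtain the energy identity
\begin{equation*}
\Delta x\sum_i n_{i+1/2}\,q_{i+1/2}^2=-\frac{1}{\gamma+1}\frac{d}{dt}\Delta x\sum_i n_i p_i+\Delta x\sum_i n_i G_i p_i;
\end{equation*}
integrating in time, the first term is $O(1/(\gamma+1))$ by (i)--(ii) and the second is controlled by $G(0)p_H$ times the $L^1$ bound on $n$, giving $\int_0^T\Delta x\sum_i n_{i+1/2}q_{i+1/2}^2\,dt\leq C(T)$. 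To pass to the unweighted bound I split indices into $\{n_{i+1/2}\geq\delta\}$, where the weight is harmless, and $\{n_{i+1/2}<\delta\}$, where $|p_{i+1}-p_i|\leq n_{i+1/2}^\gamma\leq\delta$ for any fixed $\delta<1$ and $\gamma\geq 1$, so those $q_{i+1/2}^2$ are bounded $\gamma$-uniformly.

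The routine parts are (i), (ii), and the energy identity in (v). The genuine obstacle is preserving $\gamma$-independence in (iv): the relations $\frac{d}{dt}p_i=\gamma n_i^{\gamma-1}\frac{d}{dt}n_i$ and the pressure equation introduce explicit factors of $\gamma$ that cannot be bounded termwise and must instead be absorbed through sign structure --- the dissipativity $G'\leq-\alpha$, the nonnegativity of the upwind corrections $R_i$, and the energy identity --- exactly as in the continuous Aronson--Bénilan theory. Controlling $\int_0^T\Delta x\sum_i\left(\frac{d}{dt}p_i\right)^-\,dt$ near $t=0$ is the most delicate point, and it is here that the argument must be coupled to the one-sided estimate of Subsec.~\ref{sec: AB} rather than proved by elementary means.
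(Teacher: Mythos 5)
Items (i)--(iii), the first half of (iv), and the structure of your differentiated-in-time argument all match the paper's proof, and your observation that $n_{i\pm 1/2}$ is the upwind maximum is used there in the same way. Your route to (v) is different --- a weighted energy identity $\Delta x\sum_i n_{i+1/2}q_{i+1/2}^2$ obtained by testing the density equation with $p_i$, followed by a de-weighting on $\{n_{i+1/2}\ge\delta\}$ versus $\{n_{i+1/2}<\delta\}$ --- and it does deliver a $\gamma$-uniform constant, but the de-weighting costs a factor $\delta^{2\gamma}/|\Delta x|^2$ on the small-density set, so the constant degenerates as $\Delta x\to 0$. The paper instead sums the pressure inequality \eqref{eq: ineq pressure pi} directly: the discrete integration by parts $\gamma\sum_i p_i\,\delta_x^2 p_i=-\gamma\sum_i|q_{i+1/2}|^2$ turns the $O(\gamma)$ diffusion term into the desired quantity with prefactor $\gamma-1$, giving an unweighted bound that is uniform in both $\gamma$ and $\Delta x$ in one line. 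Your version is admissible for the theorem as literally stated but strictly weaker.

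The genuine gap is the second estimate in (iv), $\int_0^T\Delta x\sum_i|\tfrac{d}{dt}p_i|\,dt\le C(T)$. You propose to write $|\tfrac{d}{dt}p_i|=\tfrac{d}{dt}p_i+2(\tfrac{d}{dt}p_i)^-$ and to control the negative part by $\gamma p_i(\delta_x^2 p_i+G_i)^-$ via the discrete Aronson--B\'enilan estimate of Subsec.~\ref{sec: AB}. This fails for two reasons. First, that estimate is only established for $\gamma=1$ and $\gamma\approx\infty$ and only for the linear reaction $G(p)=\alpha(p_H-p)$, whereas Theorem~\ref{thm: a priori} is claimed for every $\gamma>1$ and every $G$ satisfying \eqref{eq: assumptions G}; invoking it here makes the a priori estimates circularly dependent on a much more restrictive result. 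Second, even where it holds it gives $w_i\ge -1/(\gamma t)$, so the bound on the negative part is $\gamma p_i w_i^-\le p_i/t$, whose time integral diverges logarithmically at $t=0$ --- the very point you flag as ``most delicate'' is not closable this way. The idea you are missing is that the dissipative reaction term you chose to \emph{drop} in the Kato argument is exactly what produces the estimate: keeping $n_iG'(p_i)\,|\tfrac{d}{dt}p_i|$ in Eq.~\eqref{nt}, moving it to the left with $G'\le-\alpha$, and integrating in time yields $\alpha\int_0^T\Delta x\sum_i n_i|\tfrac{d}{dt}p_i|\,dt\le C(T)$ for free from the already-proved bound on $\Delta x\sum_i|\tfrac{d}{dt}n_i|$. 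The weight $n_i$ is then removed by splitting into $\{n_i\ge 1/2\}$, where it is harmless, and $\{n_i\le 1/2\}$, where $|\tfrac{d}{dt}p_i|=\gamma n_i^{\gamma-1}|\tfrac{d}{dt}n_i|\le\gamma\,2^{1-\gamma}|\tfrac{d}{dt}n_i|$ is controlled $\gamma$-uniformly by the density bound. No Aronson--B\'enilan input is needed.
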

\begin{proof}
 
\textbf{$\boldsymbol{L^\infty}$ estimates.}
Combining Eq.~\eqref{eq: q term} and Eq.~\eqref{eq: nionehalf} we recover
\begin{equation*}
 \gamma n_i^{\gamma-1}\frac{n_{i+1/2}-n_i}{\Delta x}q_{i+1/2}   =\begin{cases}
0 \quad &\text{ if } q_{i+1/2}<0,\\
\gamma n_i^{\gamma -1}(n_{i+1}-n_i)(p_{i+1}-p_i) \quad&\text{ if } q_{i+1/2}>0,
\end{cases}
\end{equation*}
and
\begin{equation*}
 \gamma n_i^{\gamma-1}\frac{n_i-n_{i-1/2}}{\Delta x}q_{i-1/2} =\begin{cases}
\gamma n_i^{\gamma -1}(n_i-n_{i-1})(p_i-p_{i-1}) \quad &\text{ if } q_{i-1/2}<0,\\
0  \quad&\text{ if } q_{i-1/2}>0.
\end{cases}
\end{equation*}
Then, the following inequality holds
\begin{equation}\label{eq: ineq pressure pi}
    \frac{d}{dt}p_i \leq \left|q_{i+1/2}\right|^2_+ +\left|q_{i-1/2}\right|^2_-+\gamma p_i (\delta_x^2p_i +G_i).
\end{equation}
Since $0\leq p_i^0\leq p_H \; \forall i$, then $0\leq p_i(t)\leq p_H$ for every $t >0$ and consequently $0\leq n_i(t)\leq p_H^{1/\gamma}$. Thus, point \textit{(i)} is proven.
\\
\textbf{$\boldsymbol{L^1}$ estimate.}
To prove estimates \textit{(ii)}, we compute the sum of Eq.~\eqref{eq: dtni} for all $i$, and we find successively
\begin{equation*}
    \frac{d}{dt}\prt*{\Delta x\sum_i n_i} = \sum_i (n_{i+1/2}q_{i+1/2} -n_{i-1/2}q_{i-1/2}) +\Delta x\sum_i n_i G(p_i)  =\Delta x\sum_i n_i G(p_i),
\end{equation*}
\begin{equation*}
    \frac{d}{dt} \left(\Delta x\sum_i n_i\right) \leq G(0) \Delta x\sum_i n_i,
\end{equation*}
where in the last inequality we use the assumptions on the growth term, \cf Eq.~\eqref{eq: assumptions G}.
By Gronwall's lemma and Eq.~\eqref{eq: assumptions initial data}, we have
\begin{equation*}
    \Delta x\sum_i |n_i(t)| \leq e^{G(0)t} \Delta x\sum_i |n^0_i|\leq C(T), \text{ for } 0\leq t\leq T.
\end{equation*}
Upon using the $L^\infty$-bound of the pressure, we finally obtain
\begin{equation*}
   \Delta x \sum_i |p_i(t)| \leq  p_H^{(\gamma-1)/\gamma} \Delta x \sum_i |n_i(t)|\leq C(T).
\end{equation*}
\\
\textbf{$\boldsymbol{BV}$ space estimate.}
We now subtract the equation for $n_i$ from the equation for $n_{i+1}$ and multiply by $\sign(n_{i+1}-n_i)$
\begin{align*}
   \Delta x \frac{d}{dt} |n_{i+1}-n_i|  \le &(n_{i+3/2} |q_{i+3/2}|-2n_{i+1/2} |q_{i+1/2}| + n_{i-1/2} |q_{i-1/2}|)\\
    &+ \Delta x(n_{i+1} G(p_{i+1})-n_i G(p_i))\sign(n_{i+1}-n_i)).
\end{align*}
We sum over $i$ to obtain
\begin{align*}
    \frac{d}{dt} \prt*{\Delta x  \sum_i  |n_{i+1}-n_i|}\leq &\sum_i (n_{i+3/2} |q_{i+3/2}|-2n_{i+1/2} |q_{i+1/2}| + n_{i-1/2} |q_{i-1/2}|)\\
    &+ \Delta x\sum_i (|n_{i+1}-n_i|G(p_i)+n_{i+1} (G(p_{i+1})-G(p_i))\sign(n_{i+1}-n_i))\\
    \leq& \Delta x\sum_i |n_{i+1}-n_i|G(p_i),
\end{align*}
where in the last inequality we use the monotonicity of $G$, Eq.~\eqref{eq: assumptions G}.
Finally, we get
\begin{equation*}
     \frac{d}{dt} \Delta x\sum_i  |n_{i+1}-n_i|  \leq G(0) \Delta x\sum_i |n_{i+1}-n_i|,
\end{equation*}
and thus we recover \textit{(iii)} thanks to Gronwall's lemma and the assumptions on the initial data, Eq.~\eqref{eq: assumptions initial data},
\begin{equation*}
  \Delta x \sum_i |n_{i+1}(t)-n_i(t)| \leq e^{G(0)t} \Delta x\sum_i |n^0_{i+1}-n^0_i|\leq C(T), \text{ for } 0\leq t\leq T.
\end{equation*}
\\
\textbf{Estimates on the time derivatives.} Now we give the proof of the boundedness of the time derivatives, \textit{(iv)}.
Deriving Eq.~\eqref{eq: dtni} with respect to time, we obtain
\begin{equation*}
   \frac{d}{dt}\left(  \frac{d}{dt}n_i\right)  \Delta x=\frac{d}{dt}\left(n_{i+1/2} q_{i+1/2}-n_{i-1/2}q_{i-1/2})+n_i G(p_i) \Delta x\right).
\end{equation*}
We multiply by $\sign\left(\frac{d}{dt}n_i\right)$ 
\begin{equation} \label{nt}
\begin{split}
 \frac{d}{dt}\left( \left|\frac{d}{dt}n_i\right|\right)  \Delta x =& \underbrace{\frac{d}{dt}\left(n_{i+1/2}q_{i+1/2}\right)\sign\left(\frac{d}{dt}n_i\right)}_{A_i}\underbrace{-\frac{d}{dt}\left(n_{i-1/2}q_{i-1/2}\right)\sign\left(\frac{d}{dt}n_i\right)}_{B_i}\\
    &+\left(G(p_i) \left|\frac{d}{dt}n_i\right| + n_i G'(p_i) \left|\frac{d}{dt}p_i\right|\right) \Delta x.
    \end{split}
\end{equation}
We now compute $A_i$ and $B_i$
\begin{align*}
    A_i&=\left[\frac{d}{dt}n_{i+1/2}\,|q_{i+1/2}|_+ - \frac{d}{dt}n_{i+1/2}\, |q_{i+1/2}|_- + n_{i+1/2} \frac{d}{dt}q_{i+1/2}\right]\sign\left(\frac{d}{dt}n_i\right)\\
    &\leq \left|\frac{d}{dt}n_{i+1}\right||q_{i+1/2}|_+ - \left|\frac{d}{dt}n_i\right||q_{i+1/2}|_- +\frac{n_{i+1/2}}{\Delta x} \left|\frac{d}{dt}p_{i+1}\right|-\frac{n_{i+1/2}}{\Delta x} \left|\frac{d}{dt}p_{i}\right|\\
    B_i&=\left[-\frac{d}{dt}n_{i-1/2}\,|q_{i-1/2}|_+ + \frac{d}{dt}n_{i-1/2}\, |q_{i-1/2}|_- - n_{i-1/2} \frac{d}{dt}q_{i-1/2}\right]\sign\left(\frac{d}{dt}n_i\right)\\
    &\leq -\left|\frac{d}{dt}n_{i}\right||q_{i-1/2}|_+ + \left|\frac{d}{dt}n_{i-1}\right||q_{i-1/2}|_- -\frac{n_{i-1/2}}{\Delta x} \left|\frac{d}{dt}p_{i}\right|+\frac{n_{i-1/2}}{\Delta x} \left|\frac{d}{dt}p_{i-1}\right|.
\end{align*}
Upon summing over $i$, we find
\begin{equation*}
    \sum_i (A_i+B_i)\leq 0,
\end{equation*}
and then, from Eq.~\eqref{nt}, we have
\begin{align*}
    \frac{d}{dt}\left( \Delta x\sum_i \left|\frac{d}{dt}n_i\right|\right)\leq \Delta x\sum_i G(p_i) \left|\frac{d}{dt}n_i\right|,
\end{align*}
since $G'$ is negative.
Hence, we obtain
\begin{equation}\label{bvtn}
     \Delta x\sum_i \left|\frac{d}{dt}n_i\right| \leq e^{G(0)t}\Delta x\sum_i \left|\left(\frac{d}{dt}n_i\right)^0\right|\leq C(T), \text{ for } 0\leq t\leq T. 
\end{equation}
It remains to prove the estimate on the time derivative of the pressure.
We compute
\begin{equation}\label{bvp}
	\int_0^T \Delta x\sum_i \left|\frac{d}{dt}p_i\right| \dx{t}\leq \int_0^T \Delta x \sum_i \gamma n_i^{\gamma-1}\left|\frac{d}{dt}n_i\right|\mathds{1}_{\{n_i\leq 1/2\}}\dx{t}  + 2\int_0^T \Delta x\sum_i n_i\left|\frac{d}{dt}p_i\right|\mathds{1}_{\{n_i\geq 1/2\}}\dx{t}.
\end{equation}
Thanks to Eq.~\eqref{bvtn} the first term in the right-hand side is bounded. 

Let us denote $\beta:=\min_i |G'(p_i)|$. We sum Eq.~\eqref{nt} over $i$ and we integrate in time to obtain
\begin{equation*}
	\Delta x\sum_i \left|\frac{d}{dt}n_i\right| + \beta \int_0^T \Delta x\sum_i n_i \left|\frac{d}{dt}p_i\right|\dx{t} \leq G(0)\int_0^T \Delta x\sum_i\left|\frac{d}{dt}n_i\right|\dx{t} + \Delta x\sum_i \left|\left(\frac{d}{dt}n_i\right)^0\right|\leq C(T),
\end{equation*}
where the last inequality comes from Eq.~\eqref{bvtn}.
Thanks to this bound, we know that
\begin{equation*}
	\int_0^T \Delta x\sum_i n_i \left|\frac{d}{dt}p_i\right| \dx{t}\leq C(T),
\end{equation*}
and from Eq.~\eqref{bvp} we finally find
\begin{equation*}
		\int_0^T \Delta x\sum_i \left|\frac{d}{dt}p_i\right|\dx{t}\leq C(T).
\end{equation*}
\paragraph{$\boldsymbol{L^2}$ estimate on the pressure gradient.}
We sum for all $i$ the inequality satisfied by the pressure, Eq.~\eqref{eq: ineq pressure pi}, namely 
\begin{align*}
    \sum_i \frac{d}{dt} p_i &\leq \sum_i \left|\frac{p_{i+1}-p_i}{\Delta x}\right|_+^2 +\sum_i \left|\frac{p_i-p_{i-1}}{\Delta x}\right|_-^2 +\gamma \sum_i p_i (\delta_x^2 p_i +G_i)\\
    &\leq \sum_i\left|\frac{p_{i+1}-p_i}{\Delta x}\right|^2 +\gamma \sum_i \frac{p_i p_{i+1}-2p_i^2+p_{i-1}p_i}{|\Delta x|^2} +\gamma \sum_i p_i G_i \\
     &=\sum_i\left|\frac{p_{i+1}-p_i}{\Delta x}\right|^2 -\gamma \sum_i\left|\frac{p_{i+1}-p_i}{\Delta x}\right|^2 +\gamma \sum_i p_i G_i.
\end{align*}
Hence, we have
\begin{equation*}
  (\gamma -1)  \sum_i\left|\frac{p_{i+1}-p_i}{\Delta x}\right|^2 \leq - \sum_i \frac{d}{dt} p_i +\gamma \sum_i p_i G_i,  
\end{equation*}
and, upon integrating in time, we recover
\begin{equation*}
 \int_0^T  \Delta x \sum_i\left|\frac{p_{i+1}-p_i}{\Delta x}\right|^2 \dx{t}  \leq   \frac{1}{\gamma -1}  \Delta x \sum_i p^0_i   -  \Delta x \sum_i p_i(T)  +\frac{\gamma}{\gamma-1} \int_0^T  \Delta x \sum_i p_i G_i \dx{t}.  
\end{equation*}
Thus \textit{(v)} follows from the assumptions on $G$ and $p_i^0$, \cf Eqs.~(\ref{eq: assumptions G}, \ref{eq: assumptions initial data}), and the estimates \textit{(ii)} proven above.

\end{proof}
\subsection{The asymptotic-preserving property}\label{sec: AP}
As mentioned in the introduction, it is well-known that when $\gamma\rightarrow\infty$ the porous medium-type equation~\eqref{eq: n} turns out to be a free boundary problem of Hele-Shaw type. In particular, passing to the limit in the equation of the pressure
\begin{equation*}
    \partialt p = \gamma p (\Delta p + G(p)) + |\nabla p|^2,
\end{equation*}
allows to recover the  \textit{complementarity relation}, namely
\begin{equation*}
    p_\infty(\Delta p_\infty + G(p_\infty))=0,
\end{equation*}
in the sense of distributions.

We show that the semi-discrete scheme~\eqref{eq: dtni} satisfies the same property and thus is asymptotic preserving (AP) as $\gamma\rightarrow\infty$.
First of all, let us prove the following convergence result (where we point out the dependence of the solution on $\gamma$ in the notation).
\begin{thm}[Convergence result]\label{thm: convergence}
Given $n_{\gamma,i}, p_{\gamma,i}$ a solution of scheme~\eqref{eq: dtni} with $\gamma>1$. Then, for all $i$, we have
\begin{align*}
    n_{\gamma,i} &\xrightarrow[]{\gamma\rightarrow\infty} n_{\infty,i}, \quad \text{ in } L^p(0,T), \; \text{for all} \;\; 1\leq p <\infty, \\[0.5em]
    p_{\gamma,i} &\xrightarrow[]{\gamma\rightarrow\infty} p_{\infty,i}, \quad \text{ in } L^p(0,T), \; \text{for all} \;\; 1\le p<\infty, \\[0.5em]
    q_{\gamma,i+\frac 1 2} & \xrightharpoonup[]{\gamma\rightarrow\infty} q_{\infty,i+\frac{1}{2}},\;\;  \text{ weakly in } L^2(0,T).
\end{align*}
\end{thm}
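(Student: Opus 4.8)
The plan is to exploit the fact that the spatial grid is fixed and finite, so that each $n_{\gamma,i}$, $p_{\gamma,i}$ and $q_{\gamma,i+1/2}$ is a scalar function of $t\in[0,T]$ only, and to turn the $\gamma$-uniform a priori estimates of Theorem~\ref{thm: a priori} into compactness in time for each of the finitely many indices $i$. The three different modes of convergence in the statement reflect the three different \emph{types} of a priori control available: a pointwise-in-time derivative bound for $n$, a total-variation-in-time bound for $p$, and an $L^2$-in-time bound for $q$.

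First, for the density I would establish strong compactness via Arzel\`a--Ascoli. Since the grid is fixed, the first part of estimate \textit{(iv)} gives $\left|\frac{d}{dt}n_{\gamma,i}(t)\right|\le C(T)/\Delta x$ for every $t\in[0,T]$ and every $i$, uniformly in $\gamma$; together with the uniform $L^\infty$ bound \textit{(i)} (note $n_H=p_H^{1/\gamma}\to 1$, so the bound is $\gamma$-uniform for $\gamma$ large), this makes the family $\{n_{\gamma,i}\}_\gamma$ equibounded and equi-Lipschitz in $t$. Hence, for each fixed $i$, one extracts a subsequence converging uniformly on $[0,T]$, and therefore in $L^p(0,T)$ for every $1\le p<\infty$, to a limit $n_{\infty,i}$.

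Next, for the pressure the natural compactness is only $BV$-in-time. The second part of estimate \textit{(iv)} provides $\int_0^T\left|\frac{d}{dt}p_{\gamma,i}\right|\dx{t}\le C(T)/\Delta x$, i.e. a uniform bound on the total variation in time, while \textit{(i)} gives $0\le p_{\gamma,i}\le p_H$. By Helly's selection theorem the family $\{p_{\gamma,i}\}_\gamma$ is relatively compact in $L^1(0,T)$ and admits a subsequence converging a.e. and in $L^1(0,T)$; dominated convergence against the uniform $L^\infty$ bound then upgrades this to $L^p(0,T)$ convergence for all $1\le p<\infty$, with limit $p_{\infty,i}$. Finally, for the fluxes, estimate \textit{(v)} shows that $\{q_{\gamma,i+1/2}\}_\gamma$ is bounded in $L^2(0,T)$ uniformly in $\gamma$, so by weak compactness of bounded sets in the Hilbert space $L^2(0,T)$ a subsequence converges weakly to some $q_{\infty,i+1/2}$. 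Because the index set $I$ is finite, a single diagonal subsequence can be chosen along which all three convergences hold simultaneously for all $i$.

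It remains to identify the limits consistently. Since $q_{\gamma,i+1/2}=(p_{\gamma,i+1}-p_{\gamma,i})/\Delta x$ and the $p_{\gamma,i}$ converge strongly in $L^2(0,T)$, the weak limit is forced to satisfy $q_{\infty,i+1/2}=(p_{\infty,i+1}-p_{\infty,i})/\Delta x$; in fact the finite difference of strongly convergent quantities converges strongly, so the stated weak convergence of $q$ follows automatically (and could even be strengthened). The main subtlety I expect is not the compactness itself---which follows routinely from Theorem~\ref{thm: a priori}---but rather the passage from \emph{subsequential} to full convergence: the argument above delivers limits only along a subsequence, and convergence of the entire family requires that the limit triple $(n_{\infty,i},p_{\infty,i},q_{\infty,i+1/2})$ be uniquely characterized, which is precisely what the discrete complementarity relation and limiting scheme of the asymptotic-preserving analysis that follows serve to pin down.
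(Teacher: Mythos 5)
Your proposal is correct and follows essentially the same route as the paper, which simply invokes ``standard compactness arguments'' based on the very same a priori estimates (the $L^\infty$ bound, the time-derivative bounds \textit{(iv)}, and the $L^2$ flux bound \textit{(v)}) and likewise only obtains convergence ``up to a subsequence''; your version merely makes the standard tools explicit (Arzel\`a--Ascoli for $n$, Helly for $p$, weak $L^2$ compactness for $q$, plus a diagonal extraction over the finite index set). Your additional observations --- that on a fixed grid $q_{\gamma,i+1/2}$ in fact converges \emph{strongly} in $L^2(0,T)$ as a finite difference of strongly convergent pressures, and that full-family convergence would require uniqueness of the limit --- are both accurate refinements of what the paper states.
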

\begin{proof}
Thanks to the uniform bounds \textit{(ii), (iv)} stated in Theorem~\ref{thm: a priori}, by standard compactness arguments we infer the convergence of $n_{\gamma,i}$ and $p_{\gamma,i}$ in $L^1(0,T)$. Since both the density and the pressure are bounded uniformly in $L^\infty(0,T)$, they converge strongly, up to a subsequence, in any $L^p(0,T)$, with $1\leq p <\infty$. 

Finally, the a priori bound \textit{(v)} of Theorem~\ref{thm: a priori} yields the weak convergence of $ q_{\gamma,i+\frac 1 2}$ in $L^2(0,T)$.

\end{proof}

Now we prove the asymptotic preserving property of the scheme. 
First of all, let us recall the equation satisfied by the pressure
\begin{equation}\label{eq: pressure scheme}
    \frac{d}{dt}p_i - \gamma n_i^{\gamma -1}\left(\frac{n_{i+1/2}-n_i}{\Delta x}q_{i+1/2}+\frac{n_i-n_{i-1/2}}{\Delta x}q_{i-1/2}\right) = \gamma p_i \left(\delta_x^2p_i +G_i\right).
\end{equation}
Since
\begin{equation*}
       \left|\gamma n_i^{\gamma -1}\left(\frac{n_{i+1/2}-n_i}{\Delta x}q_{i+1/2}+\frac{n_i-n_{i-1/2}}{\Delta x}q_{i-1/2}\right)\right| \leq \left|q_{i+1/2}\right|^2_+ +\left|q_{i-1/2}\right|^2_-,
\end{equation*}
thanks to Theorem~\ref{thm: a priori} we know that the left-hand side of Eq.~\eqref{eq: pressure scheme} is uniformly bounded in $L^1(0,T)$.
Testing Eq.~\eqref{eq: pressure scheme} against a function $\varphi\in C_{c}^1(0,T)$, we obtain
\begin{align*}
 & \int_0^T\gamma p_i \left(\delta_x^2p_i +G(p_i)\right)\varphi \dx{t}\\
&=-\frac 1 \gamma \left(\int_0^T p_i \varphi' \dx{t} -\int_0^T \gamma n_i^{\gamma -1}\left(\frac{n_{i+1/2}-n_i}{\Delta x}q_{i+1/2}  +\frac{n_i-n_{i-1/2}}{\Delta x}q_{i-1/2} \right)\varphi \dx{t}\right).
\end{align*}
Hence, passing to the limit $\gamma\rightarrow\infty$ using Theorem~\ref{thm: convergence}, we recover
\begin{equation*}
       p_{\infty,i} \ (\delta_x^2 p_{\infty,i} +G(p_{\infty,i}))=0,
\end{equation*} 
which is the discrete formulation of the \textit{complementarity relation}.

We now pass to the limit also in the equation for the density, which reads
\begin{equation*}    \frac{d}{dt}n_i=\frac{n_{i+1/2} q_{i+1/2} -n_{i-1/2} q_{i-1/2}}{\Delta x} +n_i G_i.
\end{equation*}
Multiplying by a test function, we obtain
\begin{equation*}
    -\int_0^T n_i \varphi' \dx{t} =\int_0^T \frac{n_{i+1/2} q_{i+1/2} -n_{i-1/2} q_{i-1/2}}{\Delta x} \varphi\dx{t} + \int_0^T n_i G(p_i) \varphi \dx{t},
\end{equation*}
hence, thanks to Theorem~\ref{thm: convergence}, we find (in the weak sense)
\begin{equation*}
    \frac{d}{dt}n_{\infty,i}=\frac{n_{\infty,i+1/2} q_{\infty,i+1/2} -n_{\infty,i-1/2} q_{\infty,i-1/2}}{\Delta x} +n_{\infty,i} G(p_{\infty,i}).
\end{equation*} 
\subsection{Stronger estimate on the pressure - The Aronson-B\'enilan estimate}\label{sec: AB}
In \cite{PQV}, Perthame, Quir\'os and V\'azquez recover the compactness needed to pass to the limit in Eq.~\eqref{eq: p} relying on a lower bound on the Laplacian of the pressure. In fact, they extend the celebrated Aronson-B\'enilan estimate of the PME to the case of non-trivial reaction term, \ie \ $G\neq 0$, proving the following bound
\begin{equation}\label{eq: AB}
    \Delta p + G(p) \gtrsim -\frac{C}{\gamma t}.
\end{equation}
It is our interest to investigate whether this lower bound on the second derivatives still holds for Eq.~\eqref{eq: dtni}, in order to obtain a discrete counterpart of a fundamental property of porous medium-type equations.

We are able to prove the discrete version of the Aronson-B\'enilan estimate, Eq.~\eqref{eq: AB}, for $\gamma=1$ and $\gamma \approx \infty$ and for a pressure-dependent growth term of the form $G(p)=\alpha (p_H-p)$. It remains an open question how to recover the discrete AB estimate for $\gamma>1$ and for a general reaction term $G$. The discrete version of the AB property for non-trivial reaction terms could be extremely useful in order to pass to the limit as $\Delta x$ vanishes and therefore to prove the convergence of the scheme. 
\begin{thm}[Aronson-B\'enilan estimate]
Let $G(p)=\alpha (p_H-p)$, with $\alpha\geq 0$. We set 
\begin{equation*}
   w_i:=
     \delta_x^2 p_i + G(p_i)= \frac{p_{i+1}-2p_i + p_{i-1}}{(\Delta x)^2}+ G(p_i), \qquad \forall i.
\end{equation*}
Then, for $\gamma = 1$ and $\gamma \approx \infty$, Scheme~\eqref{eq: dtni} satisfies the Aronson-B\'enilan estimate, i.e. 
\begin{equation*}
    w_i \geq -\frac{1}{\gamma t}.
\end{equation*}
\end{thm}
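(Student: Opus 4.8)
The plan is to transplant the continuous Aronson--B\'enilan argument of \cite{PQV} onto the grid: derive a discrete evolution equation for $w_i$, run a discrete minimum principle, and show that $m(t):=\min_i w_i(t)$ obeys an autonomous differential inequality whose natural barrier is exactly $-1/(\gamma t)$.

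First I would rewrite the pressure scheme as $\frac{d}{dt}p_i=\gamma p_i w_i + R_i$, where
\[
R_i:=\gamma n_i^{\gamma-1}\Big(\tfrac{n_{i+1/2}-n_i}{\Delta x}q_{i+1/2}+\tfrac{n_i-n_{i-1/2}}{\Delta x}q_{i-1/2}\Big)\ge 0
\]
is the upwind correction, whose nonnegativity is precisely the case split already carried out for the $L^\infty$ bound in Theorem~\ref{thm: a priori}. Differentiating $w_i=\delta_x^2 p_i + G(p_i)$ in time and using $G'\equiv-\alpha$ gives $\frac{d}{dt}w_i=\delta_x^2 S_i-\alpha S_i$ with $S_i:=\frac{d}{dt}p_i$. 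I would then expand the discrete Leibniz identity
\[
\delta_x^2(p_iw_i)=p_i\,\delta_x^2 w_i + w_i\,\delta_x^2 p_i + C_i,\qquad C_i:=\tfrac{(p_{i+1}-p_i)(w_{i+1}-w_i)+(p_{i-1}-p_i)(w_{i-1}-w_i)}{(\Delta x)^2},
\]
and substitute $\delta_x^2 p_i=w_i-G(p_i)$. Because $G$ is \emph{affine}, the two contributions $\pm\gamma\alpha p_iw_i$ cancel, leaving the clean identity
\[
\frac{d}{dt}w_i=\gamma p_i\,\delta_x^2 w_i + \gamma w_i^2 - \gamma\alpha p_H w_i + \gamma C_i + \delta_x^2 R_i - \alpha R_i.
\]
This affineness cancellation is the discrete analogue of the structural reason the continuous estimate tolerates a nontrivial $G$.

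Next comes the minimum principle. Letting $i_0=i_0(t)$ realise $m(t)=\min_i w_i$, three terms at $i_0$ already have the right sign: $\gamma p_{i_0}\delta_x^2 w_{i_0}\ge0$ since $p_{i_0}\ge0$ and $w$ has a discrete minimum there; the quadratic $\gamma w_{i_0}^2=\gamma m^2$; and, in the only relevant case $m<0$, the linear term $-\gamma\alpha p_H w_{i_0}\ge0$. Hence, \emph{provided} the remainder $\mathcal R_{i_0}:=\gamma C_{i_0}+\delta_x^2 R_{i_0}-\alpha R_{i_0}$ is nonnegative, I get $\frac{d}{dt}w_{i_0}\ge\gamma m^2$, and since $m$ is locally Lipschitz (a finite min of $C^1$ functions) with $m'(t)=\frac{d}{dt}w_{i_0(t)}$ for a.e. $t$, this reads $m'\ge\gamma m^2$ almost everywhere. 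Controlling $\mathcal R_{i_0}$ is the heart of the matter, and I expect it to be the main obstacle: the cross term $\gamma C_{i_0}$ is genuinely indefinite, since at a minimum $w_{i_0\pm1}-w_{i_0}\ge0$ but the factors $p_{i_0\pm1}-p_{i_0}$ are unsigned, and $\delta_x^2 R_{i_0}$ cannot be signed in general—this is exactly why the estimate is not claimed for all $\gamma>1$. For $\gamma=1$ the weight $n_i^{\gamma-1}\equiv1$ and $p=n$, so $R_i$ collapses to the explicit form $|q_{i+1/2}|_+^2+|q_{i-1/2}|_-^2$ and $\mathcal R_{i_0}$ becomes directly estimable; for $\gamma\approx\infty$ the degeneracy $n_i^{\gamma-1}\to0$ off the saturated set $\{n_i\to1\}$ localises $R_i$, so that its contribution together with $\gamma C_{i_0}$ is either of favourable sign or negligible against the dominant $\gamma m^2$. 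It is precisely in this case split, rather than in the algebra, that a uniform-in-$\gamma$ argument breaks down.

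Finally I would close by ODE comparison with the barrier $g(t)=-1/(\gamma t)$, which solves $g'=\gamma g^2$. From $m'\ge\gamma m^2$: if $m$ ever reaches a nonnegative value it stays nonnegative and the bound is trivial, while on any interval where $m<0$ one has $\frac{d}{dt}\big(-1/m\big)=m'/m^2\ge\gamma$; integrating from $s$ to $t$ and letting $s\to0^+$—where $g(s)\to-\infty$ dominates any finite $m(s)$—yields $-1/m(t)\ge\gamma t$, that is $m(t)\ge-1/(\gamma t)$. Since $m(t)=\min_i w_i(t)$, this gives $w_i(t)\ge-\frac{1}{\gamma t}$ for every $i$, as claimed.
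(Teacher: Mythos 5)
Your overall skeleton---a discrete minimum principle reducing the claim to the Riccati inequality $m'\ge\gamma m^2$ for $m(t)=\min_i w_i(t)$, followed by comparison with the barrier $-1/(\gamma t)$---is exactly the paper's strategy, and your ODE-comparison closing step is fine. But the proof has a genuine gap precisely where you flag ``the heart of the matter'': you never actually control the remainder $\mathcal R_{i_0}=\gamma C_{i_0}+\delta_x^2 R_{i_0}-\alpha R_{i_0}$, and the estimate of that term \emph{is} the proof. For $\gamma=1$ the missing idea is the following: since $R_j=(q_{j+1/2}^+)^2+(q_{j-1/2}^-)^2$, one must show $\delta_x^2[(q_{j+1/2}^+)^2]\ge\alpha\,(q_{j+1/2}^+)^2$ at the minimizing index $j$ (and similarly for the minus part). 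This is obtained by exploiting the minimality itself: $w_j\le w_{j+1}$ rewrites, using $w_i=\frac{q_{i+1/2}-q_{i-1/2}}{\Delta x}+\alpha(p_H-p_i)$, as $q_{j+3/2}+q_{j-1/2}\ge q_{j+1/2}\,(2+\alpha(\Delta x)^2)$; taking positive parts and then using convexity of $x\mapsto x^2$ gives $(q_{j+3/2}^+)^2+(q_{j-1/2}^+)^2\ge(q_{j+1/2}^+)^2(2+\alpha(\Delta x)^2)$, which is the desired inequality. Saying the term ``becomes directly estimable'' skips this entirely, and it is not a routine computation---it is the one place where the specific affine form of $G$ and the location at the minimum interact.

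Two further points. First, your claim that the cross term $C_{i_0}$ is ``genuinely indefinite'' reflects a suboptimal decomposition: you should not split $\delta_x^2(p_iw_i)$ into three pieces. Keeping it whole, minimality gives $p_{j\pm1}w_{j\pm1}\ge p_{j\pm1}w_j$ (since $p_{j\pm1}\ge0$), hence $\delta_x^2(p_jw_j)\ge w_j\,\delta_x^2 p_j=w_j^2-w_jG(p_j)$, which simultaneously absorbs $p_j\delta_x^2w_j$ and $C_j$ and produces the quadratic term; with $w_j\le0$ and $G(p_j)\ge0$ the extra term has the right sign. Second, for $\gamma\approx\infty$ your argument that the remainder is ``negligible against the dominant $\gamma m^2$'' is not a proof and cannot be made one at this level of vagueness: the paper instead divides the evolution of $w_j$ by $\gamma$, passes to the limit in the products $n_{j\pm1}^{\gamma-1}n_{j\pm 3/2}\to p_{j\pm1}$ and $n_j^{\gamma-1}n_{j\pm1/2}\to p_j$, and identifies the limiting right-hand side as the discrete elliptic expression $\frac{p_{j+1}w_{j+1}-2p_jw_j+p_{j-1}w_{j-1}}{(\Delta x)^2}-\alpha p_jw_j$, which is then bounded below by $w_j^2$ at the minimum by the same device as above. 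Without these two concrete mechanisms the differential inequality $m'\ge\gamma m^2$ is asserted, not proved.
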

 \begin{proof}
 As in \cite{PQV}, it is sufficient to prove 
 \begin{equation*}
     \frac{d \underline{w}}{dt} \ge \gamma(\underline{w})^2, \qquad \text{with }   \underline{w} := \min_i\{w_i\}.
 \end{equation*}
\noindent{$\bullet$ \textbf{Case} $\boldsymbol{\gamma=1}$.}

We have $p_i = n_i$ and thus Scheme~\eqref{eq: dtni} can be reformulated as 
 \begin{equation*}
 	\frac{d p_i}{dt} = p_i w_i + (q_{i+\frac12}^+)^2 +(q_{i-\frac12}^-)^2, 
 \end{equation*}	
where $q_{i+\frac12}^+ = \max\{q_{i+\frac12}, 0\}$ and $q_{i-\frac12}^- = \max\{-q_{i-\frac12}, 0\}$,
and it further implies that
 \begin{align}\label{eq: wi}
 	\frac{d w_i}{dt} &= \delta_x^2 (p_i w_i) + \delta_x^2[(q_{i+\frac12}^+)^2] + \delta_x^2[(q_{i-\frac12}^-)^2]-\alpha p_i w_i -\alpha(q_{i+\frac12}^+)^2 -\alpha(q_{i-\frac12}^-)^2.
 \end{align}
In order to consider the evolution of the minimal $w_i$ we denote 
 \begin{equation*}
 	w_j := \min_i w_i .
 \end{equation*} 
On the one hand, it is easy to see that \begin{equation}\label{eq: inequality for AB 1}
 	\delta_x^2 (p_j w_j) \ge w_j \delta_x^2 p_j. 
 \end{equation}
On the other hand, by definition $$\ w_j = \dfrac{q_{j+\frac12} - q_{j-\frac12}}{\Delta x} + \alpha (p_H - p_j),$$ and the inequality $w_j \le  w_{j+1}$ indicates that
 \begin{equation*}
 	  q_{j+\frac32} + q_{j-\frac12} \ge q_{j+\frac12}(2 + \alpha |\Delta x|^2).
 \end{equation*}
 As a result, 
  \begin{align*}
 	  q_{j+\frac32}^+ + q_{j-\frac12}^+ &\ge \max\{q_{j+\frac32} + q_{j-\frac12}, \, 0\} \\
	  	& \ge \max\{ q_{j+\frac12}(2 + \alpha |\Delta x|^2), \, 0\}\\
	  	& =  q_{j+\frac12}^+ (2 + \alpha |\Delta x|^2).
 \end{align*}
 And then, by Jensen's inequality, we get 
 \begin{align*}
 	(q_{j+\frac32}^+)^2 + (q_{j-\frac12}^+)^2 &\ge (q_{j+\frac12}^+)^2 (2 + \alpha |\Delta x|^2),  
 \end{align*}
 or equivalently, \begin{equation}
     \label{eq: inequality for AB 2}
     \delta_x^2[(q_{j+\frac12}^+)^2] \ge \alpha (q_{j+\frac12}^+)^2.
 \end{equation} 
 Similarly, we recover $\delta_x^2[(q_{j-\frac12}^-)^2] \ge \alpha (q_{j-\frac12}^-)^2$. 
Upon combining Eq.~\eqref{eq: inequality for AB 1} with Eq.~\eqref{eq: wi} and adding and subtracting $G(p_j) w_j$, we get 
 \begin{equation*}
 \frac{d w_i}{dt} \geq w_i^2 - G(p_j) w_j + \delta_x^2[(q_{i+\frac12}^+)^2] + \delta_x^2[(q_{i-\frac12}^-)^2]-\alpha p_i w_i -\alpha(q_{i+\frac12}^+)^2 -\alpha(q_{i-\frac12}^-)^2,
 \end{equation*}
 which yields
  \begin{equation*}
 \frac{d w_i}{dt} \geq w_i^2 - G(p_j) w_j +  -\alpha p_i w_i,
 \end{equation*}
 thanks to Eq.~\eqref{eq: inequality for AB 2}.
 Finally, using the definition of $G$ and assuming without loss of generality that $w_j\leq0$, we obtain
 \begin{equation*}
     w_j \ge w_i^2,
 \end{equation*}
 which implies
  \begin{equation*}
 	w_j\ge -\frac{1}{t}.
 \end{equation*}
\noindent{$\bullet$ \textbf{Case} $\boldsymbol{\gamma\approx\infty}$.}

Now, we prove the AB estimate for $\gamma$ very large. We recall that 
\begin{equation*} 
    \frac{d}{dt}p_i=\gamma n_i^{\gamma -1}\left(\frac{n_{i+1/2}-n_i}{\Delta x}q_{i+1/2}+\frac{n_i-n_{i-1/2}}{\Delta x}q_{i-1/2}\right)+\gamma p_i \left(\delta_x^2p_i +G_i\right),
\end{equation*}
and we use the following definitions
 \begin{equation*}
     w_i=\delta_x^2 p_i = \frac{q_{i+1/2}-q_{i-1/2}}{\Delta x} + G(p_i),
\qquad \qquad
     q_{i+\frac 1 2}= \frac{n_{i+1}^\gamma - n_i^\gamma}{\Delta x}.
\end{equation*}
Computing the time derivative of $q_{i+1/2}$ we find 
\begin{equation*}
     \frac 1 \gamma \frac{d}{dt}q_{i+1/2} = \frac{1}{|\Delta x|^2} \left[ n_{i+1}^{\gamma-1} \left(n_{i+3/2} q_{i+3/2}-n_{i+1/2}q_{i+1/2}\right) - n_i^{\gamma-1}\left(n_{i+1/2}q_{i+1/2} - n_{i-1/2}q_{i-1/2}\right)\right].
\end{equation*}
Hence,
\begin{equation}\label{eq: dtwi}
  \begin{split}   \frac 1 \gamma \frac{d}{dt}w_i = &\frac{1}{|\Delta x|^3} \left[ n_{i+1}^{\gamma-1} \left(n_{i+3/2} q_{i+3/2}-n_{i+1/2}q_{i+1/2}\right) - n_i^{\gamma-1}\left(n_{i+1/2}q_{i+1/2} - n_{i-1/2}q_{i-1/2}\right) \right]   \\
     &+\frac{1}{|\Delta x|^3}\left[-n_{i}^{\gamma-1} \left(n_{i+1/2} q_{i+1/2}-n_{i-1/2}q_{i-1/2}\right) + n_{i-1}^{\gamma-1}\left(n_{i-1/2}q_{i-1/2} - n_{i-3/2}q_{i-3/2}\right)\right]\\
     &-\frac \alpha \gamma \prt*{\gamma n_i^{\gamma -1}\left(\frac{n_{i+1/2}-n_i}{\Delta x}q_{i+1/2}+\frac{n_i-n_{i-1/2}}{\Delta x}q_{i-1/2}\right)+\gamma p_i \left(\delta_x^2p_i +G_i\right)}.
     \end{split}
 \end{equation} 
Once again we define $\min_i w_i =: w_j$.
Let us notice that
\begin{equation*}
    \lim_{\gamma\to\infty}n_{j+1}^{\gamma-1} n_{j+2} = p_{j+1},
\end{equation*}
since
\begin{equation*}
    n_{j+1}^{\gamma-1} n_{j+2} = (p_{j+1})^{\frac{\gamma-1}{\gamma}} (p_{j+2})^{\frac 1 \gamma}.
\end{equation*}
Analogously, we also have
\begin{equation*}
    \lim_{\gamma\to\infty}n_{j}^{\gamma-1} n_{j+1/2} = p_{j} \qquad \lim_{\gamma\to\infty}n_{j}^{\gamma-1} n_{j-1/2} = p_{j}.
\end{equation*}
Thus, from Eq.~\eqref{eq: dtwi}, we recover
\begin{align*}
    \frac 1\gamma  \frac{d}{dt}w_j = & \lim_{\gamma\to\infty}\left\{\frac{1}{(\Delta x)^3} \left[ n_{j+1}^{\gamma-1} \left(n_{j+3/2} q_{j+3/2}-n_{j+1/2}q_{j+1/2}\right) - n_j^{\gamma-1}\left(n_{j+1/2}q_{j+1/2} - n_{j-1/2}q_{j-1/2}\right) \right] \right. \\[0.3em]
     & \:\: \left.+\frac{1}{(\Delta x)^3}\left[-n_{j}^{\gamma-1} \left(n_{j+1/2} q_{j+1/2}-n_{j-1/2}q_{j-1/2}\right) + n_{j-1}^{\gamma-1}\left(n_{j-1/2}q_{j-1/2} - n_{j-3/2}q_{j-3/2}\right)\right]\right.\\[0.3em]
 &\:\: \left. - \alpha p_j w_j -  \frac{\alpha}{\Delta x}  n_i^{\gamma -1}\prt*{(n_{i+1/2}-n_i) q_{i+1/2}+ (n_i-n_{i-1/2}) q_{i-1/2}} \right\} \\[0.3em]
     = &  \ \frac{1}{(\Delta x)^3} \left[ p_{j+1} \left( q_{j+3/2}-q_{j+1/2}\right) - p_j\left(q_{j+1/2} - q_{j-1/2}\right) \right]  \\[0.3em]
   & \:\: +\frac{1}{(\Delta x)^3}\left[-p_{j} \left( q_{j+1/2}-q_{j-1/2}\right) + p_{j-1}\left(q_{j-1/2} - q_{j-3/2}\right)\right] - \alpha p_j w_j\\[0.3em]
     \geq& \ \dfrac{p_{j+1} w_{j+1} - 2 p_j w_j + p_{j-1} w_{j-1}}{(\Delta x)^2} 
     \\[0.3em]
     \ge & \ w_j^2,
\end{align*} 
where we assumed again $w_j\leq 0.$
Hence
\begin{equation*}
      \frac{d}{dt}w_j \ge \gamma w_j^2,
\end{equation*}
thus the result is proven.
 
\end{proof}
\section{The fully discrete implicit scheme}\label{sec: implicit}
Now we consider the fully discrete implicit scheme and show that all the properties for the semi-discrete scheme hold for the fully discrete scheme if the time step $\Delta t$ is small enough. 

Similar to Section~\ref{sec: semi-discrete scheme}, we only consider the one dimensional problem and the scheme for the multidimensional problem is straightforward. 
In space, we use the same notations as in Section~\ref{sec: semi-discrete scheme}.  
We denote $N_i^k$ to be the numerical approximation of $n(t_k, x_i)$, where $t_k = k\Delta t$ and $x_i = i\Delta x$, $k\ge 0$, $i\in I$.
Then 
$P_i^k := \left(N_i^k\right)^\gamma$
is the numerical approximation of $p(t_k, x_i)$ and the fully implicit scheme can be written as
\begin{equation} \label{scheme_implicit}
	\delta_t N_i^k = \frac{N_{i+\frac12}^{k+1} Q_{i+\frac12}^{k+1} - N_{i-\frac12}^{k+1} Q_{i-\frac12}^{k+1}}{\Delta x} + N_i^{k+1} G_i^{k+1}, 
\end{equation}
where 
\begin{equation*}
	\delta_t N_i^k = \frac{N_i^{k+1}-N_i^k}{\Delta t}, \quad
	Q_{i+\frac12}^k = \frac{P_{i+1}^k - P_i^k}{\Delta x}, \quad 
	G_i^k = G(P_i^k) \le G(0), 
\end{equation*}
and 
\begin{equation*}
    N_{i+1/2}^k =\begin{cases}
    N_i^k, \quad &\text{ if } Q_{i+1/2}^k<0,\\
    N_{i+1}^k, \quad &\text{ if } Q_{i+1/2}^k>0.
    \end{cases}
\end{equation*}
For simplicity, we introduce 
\begin{equation}
\label{notation:A}
A(U,V) = VQ_+(U,V) - UQ_-(U,V), \quad \text{ for } U, V \ge0,
\end{equation}
where $Q(U,V) = (V^\gamma - U^\gamma)/\Delta x$ and 
\begin{equation*}
Q_+(U,V) = \max\{Q(U,V), 0\}, \quad Q_-(U,V) = \max\{-Q(U,V), 0\}.
\end{equation*}
A direct computation shows that
\begin{align*}
	&\partial_1 A(U,V)= -\gamma H(U,V) U^{\gamma-1} - Q_-(U,V) \le 0,\nonumber\\
	&\partial_2 A(U,V)  = \gamma H(U,V)V^{\gamma-1} + Q_+(U,V) \ge 0,
\end{align*}
where 
\begin{equation*}
	H(U,V) = \begin{cases}
    U, \quad &\text{ if } Q(U,V)<0,\\
    V, \quad &\text{ if } Q(U,V)>0.
    \end{cases}
\end{equation*}

With the notations defined above, Scheme~\eqref{scheme_implicit} can be reformulated as 
\begin{equation}\label{scheme_implicit_2}
	(1-\Delta t G_i^{k+1})N_i^{k+1} - \nu\left(A_{i+\frac12}^{k+1} - A_{i-\frac12}^{k+1}\right) = N_i^k,
\end{equation}
where $\nu = \Delta t/\Delta x$ and 
\begin{equation*}
	A_{i+\frac12}^{k+1} = A(N_i^{k+1}, N_{i+1}^{k+1}) = N_{i+\frac12}^{k+1} Q_{i+\frac12}^{k+1}.
\end{equation*}
\begin{thm}[A priori estimates] Let $T>0$ and $n_H:=p_H^{1/\gamma}$, $\Delta t < 1/G(0)$ and $k(T) = \lfloor T/\Delta t\rfloor$. Then, there exists a unique solution $N_i^k$ satisfying
\begin{itemize}
    \item[(i)]  $0\leq N_i^k\leq n_H, \; 0\leq P_i^k\leq p_H$, $\forall t >0, \forall i,$ and $\forall n$,
    \item[(ii)] $\Delta x \sum_i N_i^k \leq C(T),  \Delta x\sum_i P_i^k \leq C(T), $
    \item[(iii)] let $M_i^k$ be a non-negative solution satisfying Eq.~\eqref{scheme_implicit_2}, then $\Delta x \sum_i |M_i^k - N_i^k| \leq C(T)$,
    \item[(iv)]   $\Delta x  \sum_i |N_{i+1}^k - N_i^k| \leq C(T)$, 
    \item[(v)] $\Delta x \sum_i |\delta_t N_i^k| \leq C(T),$ $\Delta x \sum_i |\delta_t P_i^k|  \leq C(T),$
    \item[(vi)] $\Delta t \Delta x \sum_{j=0}^k \sum_i |Q_{i+\frac 1 2}^j|^2  \leq C(T)$,
\end{itemize}
for some positive constant $C(T)$ depending on $T$ and independent of $\gamma$.
\end{thm}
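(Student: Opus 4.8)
The plan is to carry over, one by one, the five semi-discrete estimates of Theorem~\ref{thm: a priori}, but replacing the ODE/Gronwall arguments with their discrete-in-time analogues. The single structural fact that drives everything is that Scheme~\eqref{scheme_implicit_2} is a \emph{monotone} implicit scheme: the flux $A$ satisfies $\partial_1 A\le 0$ and $\partial_2 A\ge 0$ as recorded above, and — crucially — the condition $\Delta t<1/G(0)$ together with $G'\le-\alpha$ makes the scalar reaction map $\Phi(s):=s\,(1-\Delta t\,G(s^\gamma))$ strictly increasing with $\Phi'(s)\ge 1-\Delta t\,G(0)>0$. Existence of a solution is deferred to Appendix~\ref{appendix_existence}, so I focus on the a priori bounds; uniqueness will drop out of \textit{(iii)} by taking $M^0=N^0$.

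For \textit{(i)} I would run a discrete maximum principle by induction on $k$. Evaluating \eqref{scheme_implicit_2} at an index $i_0$ where $N_i^{k+1}$ is maximal, the monotonicity of $A$ forces $A_{i_0+\frac12}^{k+1}\le 0$ and $A_{i_0-\frac12}^{k+1}\ge 0$, so $-\nu(A_{i_0+\frac12}^{k+1}-A_{i_0-\frac12}^{k+1})\ge 0$ and hence $(1-\Delta t\,G_{i_0}^{k+1})N_{i_0}^{k+1}\le N_{i_0}^k$; if $N_{i_0}^{k+1}>n_H$ then $P_{i_0}^{k+1}>p_H$ gives $G_{i_0}^{k+1}<0$, forcing $N_{i_0}^{k+1}<N_{i_0}^k\le n_H$, a contradiction. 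Positivity follows symmetrically at a minimal index, using $1-\Delta t\,G_i^{k+1}\ge 1-\Delta t\,G(0)>0$. Estimate \textit{(ii)} then comes from summing \eqref{scheme_implicit_2} over $i$: the flux telescopes to zero by the Neumann condition, leaving $(1-\Delta t\,G(0))\,\Delta x\sum_i N_i^{k+1}\le \Delta x\sum_i N_i^k$, whence a discrete Gronwall iteration yields the bound with constant $(1-\Delta t\,G(0))^{-k(T)}\le e^{CT}$; the bound on $P$ follows from $P_i=N_i^{\gamma-1}N_i\le n_H^{\gamma-1}N_i=p_H^{(\gamma-1)/\gamma}N_i$ together with \textit{(i)}.

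The heart of the argument is \textit{(iii)}, the $L^1$-contraction, from which \textit{(iv)} and the first half of \textit{(v)} follow for free. Subtracting the equations for two solutions $M,N$, multiplying by $\sign(M_i^{k+1}-N_i^{k+1})$ and summing, the increasing map $\Phi$ gives $(\Phi(M_i^{k+1})-\Phi(N_i^{k+1}))\,\sign(\cdot)=|\Phi(M_i^{k+1})-\Phi(N_i^{k+1})|\ge (1-\Delta t\,G(0))\,|M_i^{k+1}-N_i^{k+1}|$, while the monotonicity $\partial_1A\le0,\ \partial_2A\ge0$ makes the summed flux differences nonpositive (a Crandall–Tartar / monotone-scheme computation, with boundary contributions killed by the Neumann condition). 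This produces $(1-\Delta t\,G(0))\,\Delta x\sum_i|M_i^{k+1}-N_i^{k+1}|\le \Delta x\sum_i|M_i^k-N_i^k|$, and a discrete Gronwall gives \textit{(iii)}. Applying \textit{(iii)} to the spatial shift $M_i^k:=N_{i+1}^k$ (itself a solution, by translation invariance of the scheme and of $G$) yields the $BV$ bound \textit{(iv)}; applying it to the time shift $M_i^k:=N_i^{k+1}$ (a solution with data $N_i^1$) gives $\Delta x\sum_i|N_i^{k+1}-N_i^k|\le e^{CT}\,\Delta x\sum_i|N_i^1-N_i^0|$, so that $\Delta x\sum_i|\delta_t N_i^k|\le C(T)$ once the base increment $\Delta x\sum_i|\delta_t N_i^0|$ is controlled by the initial-data assumptions \eqref{eq: assumptions initial data}. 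For $\delta_t P$ I would replicate the semi-discrete splitting of \eqref{bvp}: on $\{N_i\le \tfrac12\}$ write $\delta_t P_i\simeq\gamma N_i^{\gamma-1}\delta_t N_i$ and bound by $\delta_t N$, while on $\{N_i\ge\tfrac12\}$ absorb the pressure increment using the strict dissipativity $G'\le-\alpha$, exactly as $\beta=\min_i|G'(p_i)|$ was used before.

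Finally, \textit{(vi)} is the discrete-time version of estimate \textit{(v)} of Theorem~\ref{thm: a priori}. I would derive from \eqref{scheme_implicit_2} an inequality for $\delta_t P_i^{k}$ of the form $\delta_t P_i^{k}\le |Q_{i+\frac12}^{k+1}|_+^2+|Q_{i-\frac12}^{k+1}|_-^2+\gamma P_i^{k+1}(\delta_x^2 P_i^{k+1}+G_i^{k+1})$, sum over $i$, and apply discrete summation by parts $\Delta x\sum_i P_i\,\delta_x^2 P_i=-\Delta x\sum_i|Q_{i+\frac12}|^2$ (Neumann boundary) to expose the factor $(\gamma-1)\sum_i|Q_{i+\frac12}^{k+1}|^2$. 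Multiplying by $\Delta t$, summing over $j=0,\dots,k$ — so that $\Delta t\sum_j\delta_t P_i^j=P_i^{k+1}-P_i^0$ telescopes — and invoking \textit{(i)}, \textit{(ii)} and the bound $G_i\le G(0)$ yields \textit{(vi)} with a constant independent of $\gamma$.

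I expect the main obstacle to be \textit{(iii)}: verifying cleanly that the fully implicit, nonlinear, upwind flux $A$ together with the reaction map $\Phi$ genuinely yields an order-preserving $L^1$-contraction after the sign-multiplication — i.e.\ controlling the summed flux differences and the solution-dependent reaction $G_i^{k+1}$ simultaneously. Since \textit{(iv)} and the $\delta_t N$ part of \textit{(v)} are bootstrapped from it, this comparison estimate is the linchpin; the other technical hot spots are the $\{N_i\ge\tfrac12\}$ absorption for $\delta_t P$ and the correct discrete-in-time telescoping in \textit{(vi)}.
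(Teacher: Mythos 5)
Your proposal is correct and follows essentially the same route as the paper: the $L^1$-contraction obtained from the sign-multiplication together with $\partial_1 A\le 0$, $\partial_2 A\ge 0$ and $G'\le 0$ is indeed the linchpin, with \textit{(iv)} and the $\delta_t N$ bound obtained by the spatial and temporal shifts, the $\delta_t P$ bound by the same $\{N\le\tfrac12\}$/$\{N>\tfrac12\}$ splitting, and \textit{(vi)} by the same convexity inequality plus summation by parts and telescoping in time. The only cosmetic difference is point \textit{(i)}: you argue via a discrete maximum principle at extremal indices, whereas the paper extracts the $L^\infty$ bounds directly from the sub/supersolution sandwich ($\underline N\equiv 0$, $\bar N\equiv p_H^{1/\gamma}$) used in the Appendix to prove existence and uniqueness — both arguments rest on the same monotonicity of $A$ and are equally valid.
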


\begin{proof} 

\textbf{Solvability and $L^\infty$ estimate.}
When $\Delta t < 1/G(0)$ and $0\le N_i^k \le p_H^{\frac{1}{\gamma}}$ for all $i$, we claim that  there exists a unique solution $N_i^{k+1}$ satisfying $0\le N_i^{k+1} \le p_H^{\frac{1}{\gamma}}$.

The proof relies on the the existence of sub- and supersolutions. 
When $\bar{N}_i = p_H^{\frac{1}{\gamma}}$ for all $i$, we have $G(\bar{N}_i^\gamma)<0$ and $A(\bar{N}_i,\bar{N}_{i+1}) = 0$, which implies that 
\begin{equation*}
	(1-\Delta t G(\bar{N}_i^\gamma))\bar{N}_i - \nu \left(A(\bar{N}_i,\bar{N}_{i+1}) - A(\bar{N}_{i-1},\bar{N}_{i})\right) \ge N_i^k
\end{equation*} 
and thus 
$
\bar{N}_i = p_H^{\frac{1}{\gamma}}
$
 is a supersolution.
Similarly, we can prove that
$
\bar{N}_i = 0
$
 is a subsolution.
Then following the proof in \cite{AlmeidaBubbaPerthamePouchol}, we can prove the existence and uniqueness of the solution. The detailed proof can be found in Appendix \ref{appendix_existence}. 
  \\
\textbf{$\boldsymbol{L^1}$ estimate.}
Summing up Eq.~\eqref{scheme_implicit} over $i$, we have
\begin{equation*}
    \Delta x\sum_i N_i^{k+1} - \Delta x\sum_i N_i^k  = \Delta t \Delta x\sum_i N_i^{k+1} G_i^{k+1} \le G(0)  \Delta t\Delta x\sum_i N_i^{k+1} .
\end{equation*}
As a result, when $\Delta t \le \alpha/G(0)$ with $\alpha<1$, we have 
 \begin{equation*}
    \Delta x\sum_i N_i^k  \le \frac{1}{(1-\Delta t G(0))^k} \Delta x\sum_i N_i^{0} \le \frac{1}{(1-\alpha)^{\frac{G(0)T}{\alpha}}}\Delta x \sum_i N_i^{0} ,
\end{equation*}
where $T = k\Delta t$.  
Further, we have $\sum_i P_i^k \le p_H^{\gamma-1}\sum_i N_i^k \le C(T)$.
\\
\textbf{$\boldsymbol{L^1}$-contraction.} 
Denote $M_i^k$ to be another non-negative solution satisfying Eq.~\eqref{scheme_implicit_2}, or more specifically
\begin{equation*}
	(1-\Delta t G_{M,i}^{k+1})M_i^{k+1} - \nu\left(A_{M,i+\frac12}^{k+1} - A_{M,i-\frac12}^{k+1}\right) = M_i^k,
\end{equation*}
where 
\begin{equation*}
G_{M,i}^k = G(P_{M,i}^k) \text{ with } P_{M,i}^k = (M_i^k)^\gamma, \,A_{M,i+\frac12}^{k+1} =A_{i+\frac12}(M_i^{k+1}, M_{i+1}^{k+1}).
\end{equation*}
Subtracting the equation for $N_i^k$ from the equation for $M_i^k$, we get 
\begin{equation*}
	I_1 - \nu\left(A_{M,i+\frac12}^{k+1} - A_{N,i+\frac12}^{k+1}\right) + \nu\left(A_{M,i-\frac12}^{k+1} - A_{N,i-\frac12}^{k+1}\right) = M_i^k - N_i^k,
\end{equation*}
where the term $I_1$ is defined as 
\begin{align*}
	I_1 &= \left[(1-\Delta t G_{M,i}^{k+1})M_i^{k+1} - (1-\Delta t G_{N,i}^{k+1})N_i^{k+1}\right]\\ 
	&=(1-\Delta t G_{M,i}^{k+1})(M_i^{k+1}-N_i^{k+1}) - \Delta t(G_{M,i}^{k+1}-G_{N,i}^{k+1})N_i^{k+1}\\
	&=(1-\Delta t G_{M,i}^{k+1})(M_i^{k+1}-N_i^{k+1}) - \Delta t G'(P_{\eta,i}^{k+1})N_i^{k+1} (P_{M,i}^{k+1}-P_{N,i}^{k+1})
\end{align*}
where $P_{\eta,i}^{k+1} = (\eta_i^{k+1})^{\gamma}$ with $\eta_i^{k+1}$ being some non-negative number between $M_i^{k+1}$ and $N_i^{k+1}$. 
Noticing that $G'(\cdot)\le0$ and the fact that $P_{M,i}^{k+1}-P_{N,i}^{k+1}$ shares the same sign with $M_i^{k+1}-N_i^{k+1}$, we have that
\begin{align*}
	I_1\sign(M_i^{k+1}-N_i^{k+1}) &\ge (1-\Delta t G(0)) |M_i^{k+1}-N_i^{k+1}|+\Delta t \min_p |G'(p)| N_i^{k+1} \left|P_{M,i}^{k+1}-P_{N,i}^{k+1}\right|.
\end{align*}
In fact, we can further prove that
\begin{equation} \label{sign:I_1}
\begin{split}
	I_1\sign(M_i^{k+1}-N_i^{k+1}) &\ge (1-\Delta t G(0)) |M_i^{k+1}-N_i^{k+1}|\\[0.2em]
	&\qquad +\Delta t \min_p |G'(p)| \max\{M_i^{k+1}, N_i^{k+1}\} \left|P_{M,i}^{k+1}-P_{N,i}^{k+1}\right|\\[0.2em]
	& \ge(1-\Delta t G(0)) |M_i^{k+1}-N_i^{k+1}|.
	\end{split}
\end{equation}
By the mean value theorem, we have 
\begin{align*}
	A_{M,i+\frac12}^{k+1} - A_{N,i+\frac12}^{k+1} 
	& =   \alpha_i^{k+1} \left(M_i^{k+1}-N_i^{k+1}\right) 
		+ \beta_{i+1}^{k+1} \left(M_{i+1}^{k+1}-N_{i+1}^{k+1}\right) 
\end{align*}
where $\alpha_i^{k+1}\le0$ and $\beta_i^{k+1}\ge0$ are defined as 
\begin{align*}
	&\alpha_i^{k+1} :=\partial_1 A(\xi_i^{k+1}, M_{i+1}^{k+1})= \frac{A(M_i^{k+1}, M_{i+1}^{k+1}) - A(N_i^{k+1}, M_{i+1}^{k+1})}{M_i^{k+1}-N_i^{k+1}},\\
	&\beta_i^{k+1}:=\partial_2 A(N_i^{k+1}, \eta_{i+1}^{k+1})= \frac{A(N_i^{k+1}, M_{i+1}^{k+1}) - A(N_i^{k+1}, N_{i+1}^{k+1})}{M_{i+1}^{k+1}-N_{i+1}^{k+1}},
\end{align*}
for some $\xi_i^{k+1}$, $\eta_i^{k+1}$ between $M_{i}^{k+1}$ and $N_{i}^{k+1}$. 
As a result, 
\begin{equation}\label{sign:A_right}
	\left(A_{M,i+\frac12}^{k+1} - A_{N,i+\frac12}^{k+1}\right)\sign(M_i^{k+1}-N_i^{k+1})\le \alpha_i^{k+1} \left|M_i^{k+1}-N_i^{k+1}\right|
		+ \beta_{i+1}^{k+1} \left|M_{i+1}^{k+1}-N_{i+1}^{k+1}\right|.
\end{equation}
Similarly, we can prove that 
\begin{equation}\label{sign:A_left}
	\left(A_{M,i-\frac12}^{k+1} - A_{N,i-\frac12}^{k+1}\right)\sign(M_i^{k+1}-N_i^{k+1})\ge \alpha_{i-1}^{k+1} \left|M_{i-1}^{k+1}-N_{i-1}^{k+1}\right|
		+ \beta_i^{k+1} \left|M_i^{k+1}-N_i^{k+1}\right|.
\end{equation}
Combining Eqs.~(\ref{sign:I_1}, \ref{sign:A_right}, \ref{sign:A_left}), we finally get 
\begin{align*} 
	&\left(1-\Delta t G(0)-\nu\alpha_i^{k+1}+\nu\beta_i^{k+1}\right)\left|M_i^{k+1}-N_i^{k+1}\right| -\nu\beta_{i+1}^{k+1}\left|M_{i+1}^{k+1}-N_{i+1}^{k+1}\right| \\
	&+\nu \alpha_{i-1}^{k+1}\left|M_{i-1}^{k+1}-N_{i-1}^{k+1}\right|
	\le \left|M_i^k-N_i^k\right|. 
\end{align*}
Summing over $i$, we have
\begin{equation*}
	\left(1-\Delta t G(0)\right)\sum_i |M_i^{k+1}-N_i^{k+1}| \le \sum_i |M_i^k-N_i^k|.
\end{equation*}
which indicates that, when  $\Delta t < 1/G(0)$,
\begin{equation}\label{L1-contraction}
	 \Delta x\sum_i \left|M_i^k-N_i^k\right| \le \frac{1}{(1-\Delta t G(0))^k} \Delta x\sum_i \left|M_i^{0} -N_i^{0}\right|  \le C(T),
\end{equation}
since we assumed that $ \Delta x\sum_i \left|M_i^{0} -N_i^{0}\right| \le C$.
\\
\textbf{$\boldsymbol{BV}$ estimate.} 
When $\Delta t < 1/G(0)$, by taking $M_i^k=N_{i+1}^k$ in Eq.~\eqref{L1-contraction}, we get that,
\begin{equation*}
  \Delta x	\sum_i |N_{i+1}^k-N_i^k| \le \frac{1}{(1-\Delta t G(0))^k} \Delta x \sum_i |N_{i+1}^{0} -N_i^{0}|  \le C(T).
\end{equation*}
\\
\textbf{Estimate on time derivative.} 
The boundedness of the discrete time derivative of the density comes directly from the $L^1$-contraction \eqref{L1-contraction}.
Assuming $\Delta t < 1/G(0)$ and taking $M_i^k=N_i^{k+1}$ in Eq.~\eqref{L1-contraction}, we have that 
\begin{equation}\label{dtN_bound}
	\Delta x \sum_i |\delta_t N_i^k|  \le \frac{1}{(1-\Delta t G(0))^k} \Delta x \sum_i |\delta_t N_i^{0}| \le C(T).
\end{equation}
Analogous to the semi-discrete case, we can prove an estimate of the discrete time derivative of the pressure.
Denoting $k(T) = \lfloor T/\Delta t\rfloor$, where $\lfloor x\rfloor$ is the largest integer that is less or equal than $x$, 
then we are able to prove that
\begin{equation}\label{proof:dtP}
	\Delta t \Delta x\sum_{n=1}^{k(T)} \sum_i \left|\delta_t P_{N,i}^k\right|  \leq C(T).
\end{equation}
The proof is similar to the semi-discrete case. 
To begin with, we have that 
\begin{equation*}
	|\delta_t P_{N,i}^k| = |\delta_t P_{N,i}^k| \mathds{1}_{\{\max\{N_i^k,N_i^{k+1}\} \le\frac12\}}
		+ |\delta_t P_{N,i}^{k+1}| \mathds{1}_{\{\max\{N_i^k,N_i^{k+1}\}>\frac12\}}.
\end{equation*}
The first term is uniformly bounded in $\gamma$ thanks to Eq.~\eqref{dtN_bound} and 
\begin{equation}\label{proof:dtP_1}
	|\delta_t P_{N,i}^k| \le \gamma \max\{(N_i^k)^{\gamma-1},(N_i^{k+1})^{\gamma-1}\} |\delta_t N_i^k|\le \frac{\gamma}{2^{\gamma-1}}|\delta_t N_i^k|. 
\end{equation} 
To give an estimate of the second term, we recall the first inequality in Eq.~\eqref{sign:I_1}, \ie
\begin{align*}
I_1\sign(M_i^{k+1}-N_i^{k+1}) \ge &(1-\Delta t G(0)) |M_i^{k+1}-N_i^{k+1}|\\
&+\Delta t \min_p |G'(p)| \max\{M_i^{k+1}, N_i^{k+1}\} \left|P_{M,i}^{k+1}-P_{N,i}^{k+1}\right|.
\end{align*}
And then following a similar procedure as before, we have that 
\begin{align*}
 \left(1-\Delta t G(0)-\nu\alpha_i^{k+1}+\nu\beta_i^{k+1}\right)&\left|M_i^{k+1}-N_i^{k+1}\right|
\\[0.3em]
&+ \Delta t \min_p |G'(p)| \max\{M_i^{k+1}, N_i^{k+1}\}\left|P_{M,i}^{k+1}-P_{N,i}^{k+1}\right|
\\[0.3em]
&-\nu\beta_{i+1}^{k+1}\left|M_{i+1}^{k+1}-N_{i+1}^{k+1}\right|+\nu \alpha_{i-1}^{k+1}\left|M_{i-1}^{k+1}-N_{i-1}^{k+1}\right|\\[0.3em]
&	\le \left|M_i^k-N_i^k\right|.
\end{align*}
Now taking $M_i^k=N_i^{k+1}$, dividing both sides by $\Delta t$ and summing over $i$ and $n=0,1,\dots$, we proved that 
\begin{align*}
	&\min_p |G'(p)|  \Delta t \Delta x \sum_{n=1}^{k(T)}\sum_i \max\{N_i^k,N_i^{k+1}\} \left|\delta_t P_{N,i}^k\right|
	\\
	\le &\Delta x  \sum_i\left|\delta_t N_i^{0}\right|  -\Delta x \sum_i\left|\delta_t N_i^{k(T)}\right|
	+ G(0) \Delta t  \Delta x \sum_{n=1}^{k(T)}\sum_i |\delta_t N_i^k|\le C(T),
\end{align*}
which further implies that
\begin{equation}\label{proof:dtP_2}
\begin{split}
  \Delta t  \Delta x \sum_{n=1}^{k(T)}\sum_i |\delta_t P_{N,i}^k| &\mathds{1}_{\{\max\{N_i^k,N_i^{k+1}\}>\frac12\}} 
 \\
 &\le 2  \Delta t  \Delta x \sum_{n=1}^{k(T)}\sum_i \max\{N_i^k,N_i^{k+1}\} |\delta_t P_{N,i}^k| \\
 &\le C(T).
\end{split}
\end{equation}
The conclusion \eqref{proof:dtP} is then obvious by combining Eq.~\eqref{proof:dtP_1} and Eq.~\eqref{proof:dtP_2}.
\\
\textbf{$\boldsymbol{L^2}$ estimate on the pressure gradient.}
Rewriting Eq.~\eqref{scheme_implicit} to be 
\begin{equation*}
\delta_t N_i^k = 
\frac{N_{i+\frac12}^{k+1}  - N_{i}^{k+1}}{\Delta x} Q_{i+\frac12}^{k+1} + \frac{N_{i}^{k+1}  - N_{i-\frac12}^{k+1}}{\Delta x} Q_{i-\frac12}^{k+1} + N_i^{k+1} (\delta_x^2 P_i^{k+1} + G_i^{k+1})
\end{equation*}
and multiplying both sides by $\gamma(N_i^{k+1})^{\gamma-1}$, we get 
\begin{align*}
	\gamma(N_i^{k+1})^{\gamma-1}\delta_t N_i^k 
	\le \left|Q_{i+\frac12}^{k+1} \right|_+^2 + \left|Q_{i-\frac12}^{k+1} \right|_-^2+ \gamma P_i^{k+1} (\delta_x^2 P_i^{k+1} + G_i^{k+1}).
\end{align*}
Noticing that $\delta_t P_i^k \le \gamma(N_i^{k+1})^{\gamma-1}\delta_t N_i^{k}$ due to the convexity, 
we proved 
\begin{equation}\label{ineq:dtP}
	\delta_t P_i^k \le \left|Q_{i+\frac12}^{k+1} \right|_+^2 + \left|Q_{i-\frac12}^{k+1} \right|_-^2+ \gamma P_i^{k+1} (\delta_x^2 P_i^{k+1} + G_i^{k+1}).
\end{equation}
Summing Eq.~\eqref{ineq:dtP} over all $i$, we have 
\begin{align*}
	\delta_t \sum_i P_i^k &\le \sum_i\left|Q_{i+\frac12}^{k+1} \right|_+^2 + \sum_i\left|Q_{i-\frac12}^{k+1} \right|_-^2+ \sum_i\gamma P_i^{k+1} (\delta_x^2 P_i^{k+1} + G_i^{k+1})\\
	& = (1-\gamma)\sum_i |Q_{i+\frac12}^{k+1}|^2  + \gamma \sum_i P_i^{k+1}G_i^{k+1}\\
	& \le (1-\gamma)\sum_i |Q_{i+\frac12}^{k+1}|^2  + \gamma G(0)\sum_i P_i^{k+1}.
\end{align*}
Then summing over $n=0,1,2,\dots$ and dividing both sides by $\gamma-1$, we get 
\begin{equation*}
	\Delta t \Delta x\sum_{j=0}^k\sum_i |Q_{i+\frac12}^{j}|^2  \le \dfrac{\Delta x\sum_i P_i^0 - \Delta x\sum_i P_i^k}{\gamma - 1} 
		+ \frac{\gamma}{\gamma-1}G(0) \Delta t \Delta x\sum_{j=0}^k\sum_i P_i^{j} \le C(T).
\end{equation*}

\end{proof}

\section{Numerical simulations}\label{sec: numerical simulations}
Now we present some numerical results on Eq.~\eqref{eq: n} and for some extensions of the model including the effect of a nutrient. In particular, we are interested in the performance of the implicit scheme~\eqref{scheme_implicit} for large values of $\gamma$, hence confirming the AP property of the scheme.

\subsection{Accuracy test: the Barenblatt solution}
At first, we consider the simplest example in order to test the accuracy of the scheme as $\gamma$ increases.
Let us take the standard porous medium equation in dimension 1, \ie \ Eq.~\eqref{eq: n} with trivial reaction terms
\begin{equation*}
    \partialt n = \frac{\partial^2 n^{\gamma+1}}{\partial x^2}, 
\end{equation*}
where for sake of simplicity we take $p=\frac{\gamma+1}{\gamma}n^\gamma$.
We take as initial data the delayed Barenblatt solution
\begin{equation}\label{eq: barenblatt}
    n(x,0) = \frac{1}{t_0^\beta} \left| C-\beta\frac{\gamma}{2(\gamma+1)}\frac{x^2}{t_0^{2\beta}}\right|_+^{\frac 1 \gamma},
\end{equation}
with $t_0=0.01$, $\beta=1/(\gamma+2)$ and $C$ a positive constant to be chosen later.

We compare the numerical solution of the scheme to the Barenblatt profile for $\gamma=3$, $\gamma=12$, $\gamma=40$. We compute the $L^1$-error for $\Delta x = 1/2^k$, with $k=4,5,6,7,8$ and $\Delta t= 0.01 \Delta x$. 

We choose $[-5,5]$ to be the spatial computational domain and $T=0.1$ as final time. Upon defining $N_x=10/\Delta x$, $N_t=0.1/ \Delta t$, the error is given by
\begin{equation}\label{eq: error}
    err_1 = \sum_{i=1}^{N_x}\sum_{j=1}^{N_t} |N_i^k - n(i\Delta x, j \Delta t)| \Delta x \Delta t.
\end{equation}
In the formula of the exact solution, Eq.~\eqref{eq: barenblatt}, we choose $C=1$ for $\gamma=3$ and $C=0.1$ for $\gamma=12,40$.
In Figure~\ref{fig:numericalvsbarenblatt}, the plots of both the analytical solution and the numerical solution are displayed. We notice that as $\gamma$ increases, the moving boundary becomes sharper and sharper and this affects the accuracy of the scheme as can be seen in Figure~\ref{fig:error}, where on the left we plot the error \eqref{eq: error} with respect to $\Delta x$, and for different values of $\gamma$, and on the right we display the error along the time variable, namely
\begin{equation*}
    err_1(j \Delta t) =\sum_{i=1}^{N_x} |N_i^j - n(i\Delta x, j \Delta t)| \Delta x .
\end{equation*}
\begin{figure}[hbt!]
	\centering
	\begin{subfigure}[b]{0.48\textwidth}
		\includegraphics[width=\textwidth]{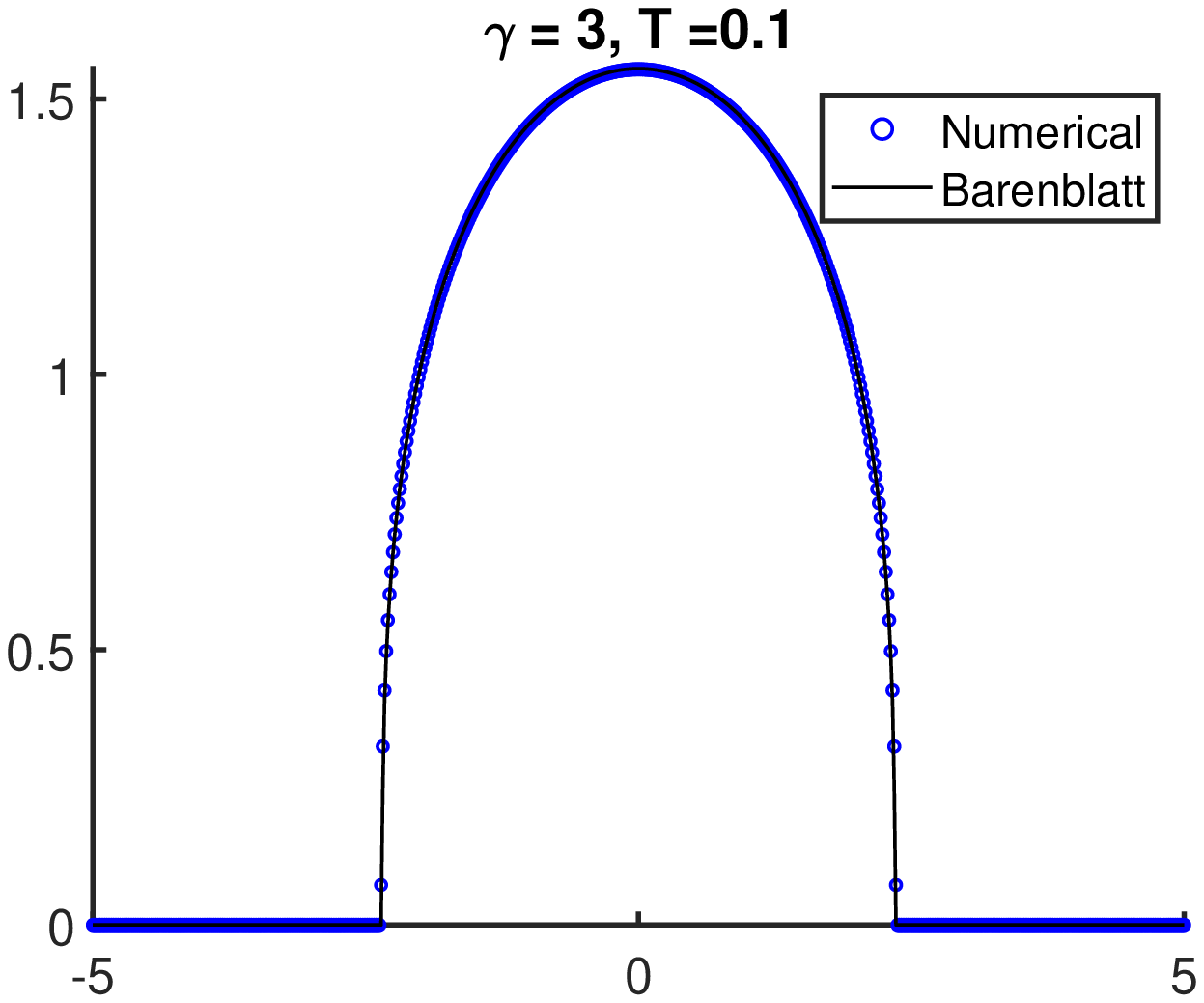}
	\end{subfigure}
	\begin{subfigure}[b]{0.48\textwidth}
		\includegraphics[width=\textwidth]{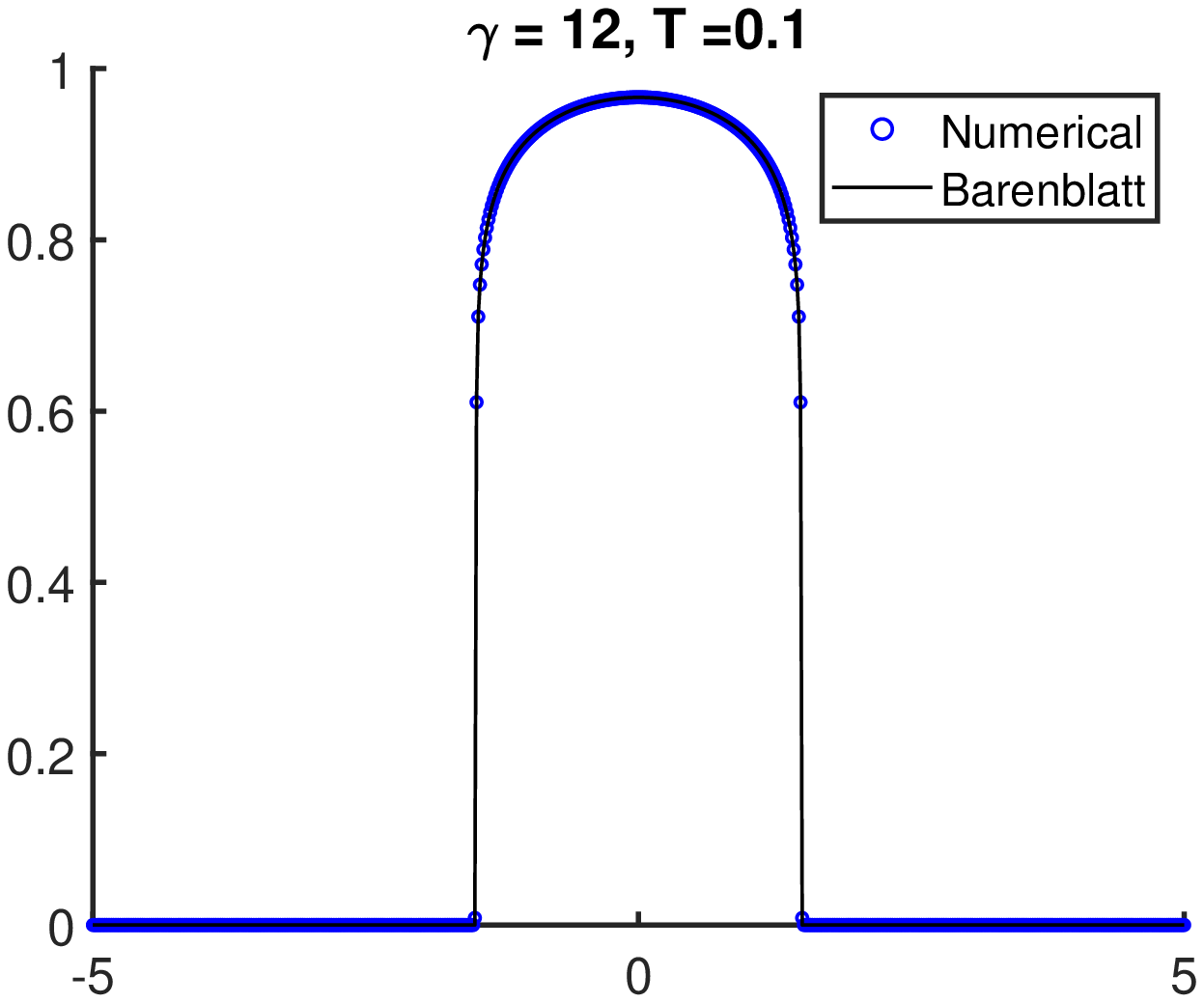}
	\end{subfigure}
	\caption{Porous Medium Equation in 1D: we compare the analytical solution and the numerical solution for $\gamma=3$ (left) and $\gamma=12$ (right), with $\Delta x=1/64$ and  $\Delta t = 0.01 \Delta x$. }\label{fig:numericalvsbarenblatt}
\end{figure}
\begin{figure}[hbt!]
	\centering
	\begin{subfigure}[b]{0.48\textwidth}
		\includegraphics[width=\textwidth]{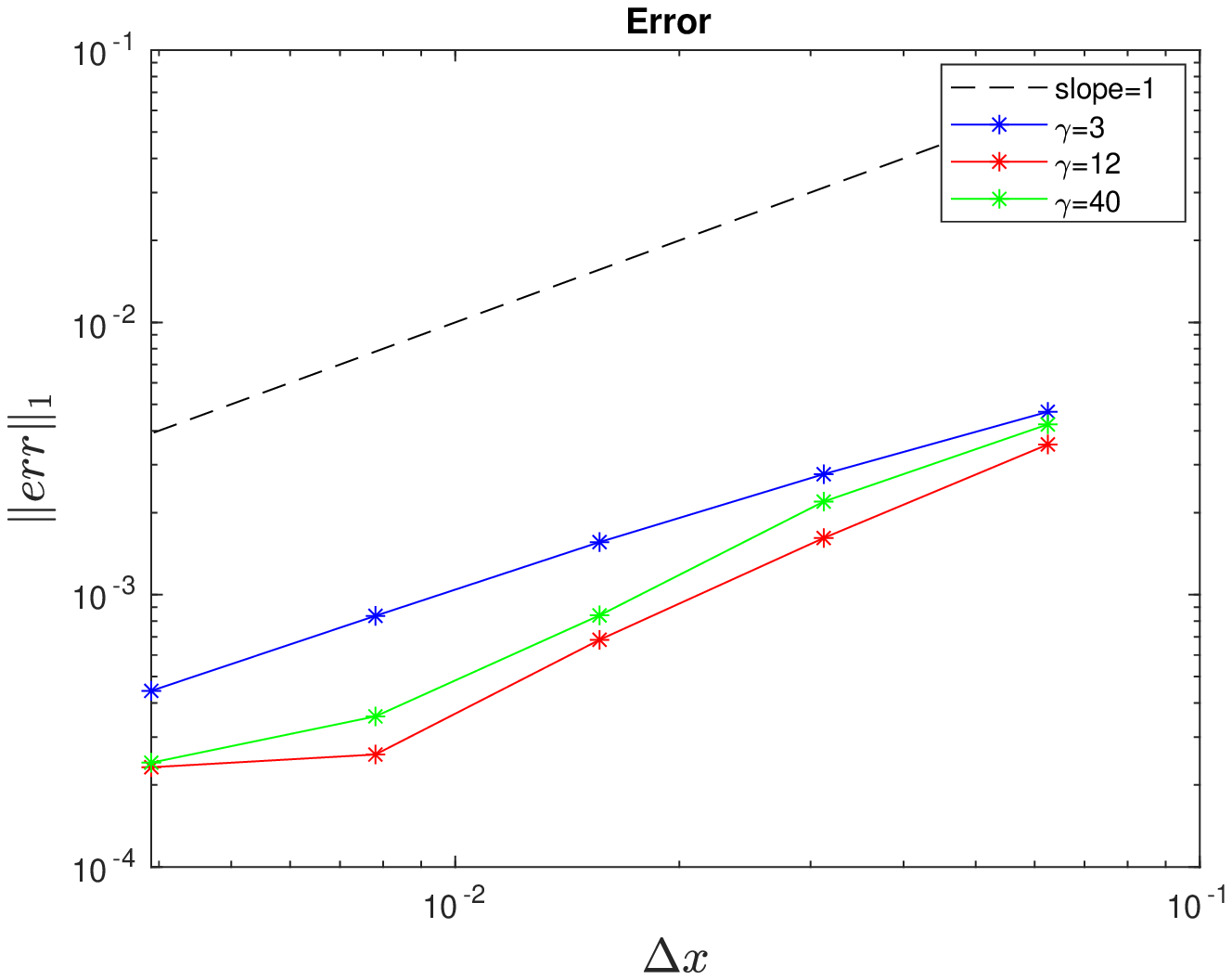}
	\end{subfigure}
	\begin{subfigure}[b]{0.48\textwidth}
		\includegraphics[width=\textwidth]{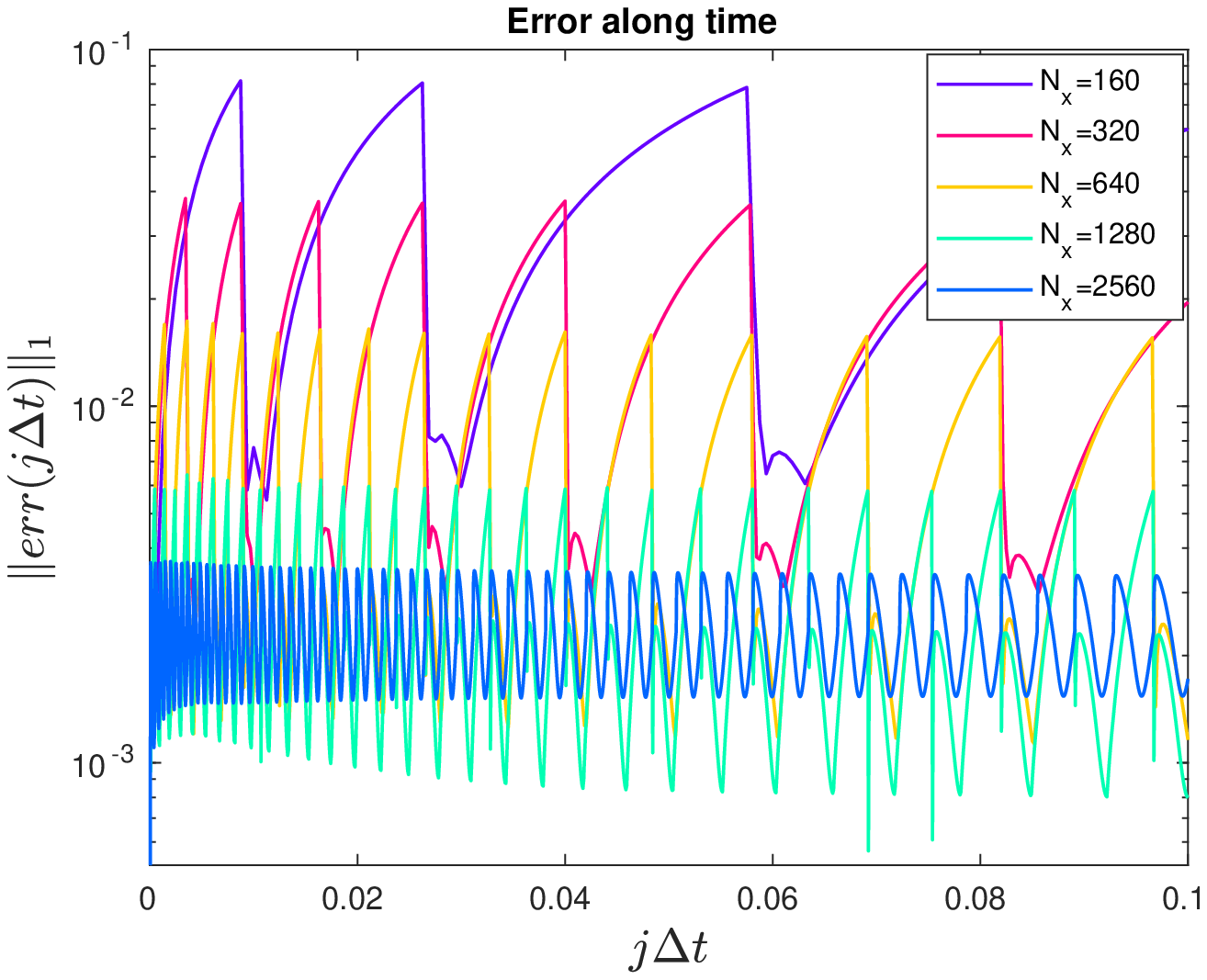}
	\end{subfigure}
	\caption{Porous Medium Equation in 1D: Left: plot of the error with respect to $\Delta x$ for different values of $\gamma$. Right: plot of the error along time for $\gamma=12$}\label{fig:error}.
\end{figure}
The oscillations of the error along time confirm the effect of the free boundary on the accuracy. 

\subsection{1D model with nutrient: \textit{in vitro} and \textit{in vivo}}
Including the effect of a nutrient (\textit{e.g.} oxygen) into the model, the density equation~\eqref{eq: n} is coupled with an equation for the nutrient concentration $c(x,t)$, to obtain the system
\begin{equation*}
    \begin{dcases}
       \partialt n - \div(n \nabla p) = n G(p,c),\\
       \tau\partialt c - \Delta c + H(n,c) = 0,
       \end{dcases}
\end{equation*}
where $H$ denotes the nutrient consumption and $\tau$ is a time scaling parameter. Since the nutrient diffuses much faster than the tumor invasion, it is usual to take $\tau=0$.
The consumption term $H$ can take different forms, depending on which stage of tumor growth we put under investigation. 

For instance, if one considers an \textit{in vitro} setting, which means that the tumor is developing surrounded by an homogeneous liquid, then the level of nutrient is assumed to be constant outside the region occupied by the tumor, while inside it is consumed linearly, with a rate $\psi(n)$ depending on the tumor cell population density.
The model reads
\begin{equation}
    \label{eq: sysnutri_vitro}
    \tag{in vitro}
    \begin{dcases}
        - \Delta c + \psi(n) c = 0, &\text{ in } \{n> 0\}, \\[0.3em]
          c = c_B, &\text{ in } \R^d \setminus \{n> 0\},
          \end{dcases}
\end{equation}
The consumption rate $\psi(n)$ is always non-negative and vanishes for $n=0$.

A second kind of models are the \textit{in vivo} models, which include the effect of the blood vessels that deliver the nutrient supply. During the early stages of tumor growth, the vasculature is present only outside the tumor region (\textit{avascular phase}), and the equation reads
\begin{equation}
    \label{eq: nutri_vivo_avas}
    \tag{in vivo}
    - \Delta c + \psi(n) c = (c_B -c) \mathds{1}_{\{n=0\}}.
\end{equation}
On the other hand, if the tumor is already in its \textit{vascular phase}, we have
\begin{equation}
    \label{eq: nutri_vivo_vasc}
    \tag{in vivo: vascular}
    - \Delta c + \psi(n) c = (c_B -c) K(p),
\end{equation}
where $K$ is the nutrient release rate which depends on the pressure. In particular, we assume it to decrease with respect to the pressure to describe the shrinking effect of the mechanical stress generated by the cells on the vessels, which may cause the reduction of nutrients delivery, \textit{cf.} \cite{MMACCL}. 
We refer the reader to \cite{PTV} for an extensive study of the Hele-Shaw model in both the \textit{in vitro} and \textit{in vivo} cases.

From now on, we assume that the growth term $G$ depends only on the nutrient concentration, forgetting the effect of the pressure. Then, passing to the \textit{incompressible limit} $\gamma \rightarrow \infty$, we obtain the limit problem
\begin{equation*} 
    \begin{dcases}
       \partialt{n_\infty} - \div(n_{\infty} \nabla p_{\infty}) = n_{\infty} G(c_{\infty}),\\[0.3em]
        - \Delta c_{\infty} + H(n_{\infty},c_{\infty}) = 0,
       \end{dcases}
\end{equation*}
and since it holds $p_{\infty}(1- n_{\infty})=0, $
the density is constantly equal to $1$ in the set $\{p_{\infty}>0\}$.

As shown in \cite{DP}, one can also pass to the limit in the equation for the pressure, which leads to the Hele-Shaw problem
\begin{equation*}
    \begin{dcases}
-\Delta p_{\infty}=G(c_{\infty}), &\text{ in } \Omega(t),\\[0.3em]
 p_{\infty}=0, &\text{ on } \partial\Omega(t),
\end{dcases}
\end{equation*}
where $\Omega(t):=\{x \ | \ p_{\infty}(x,t)>0\}$. 
\subsubsection{In vitro model: comparison with the exact solution of the Hele-Shaw problem}
We consider the model~\eqref{eq: sysnutri_vitro} in 1D with linear growth, \ie \ $G(c)=c$, and $\psi(n)=n$, namely
\begin{equation}\label{eq: 1D vitro}
\begin{cases}
\partial_t n -\partial_x (n \partial_x p)= n c, &\\
-\partial_{xx} c + n c=0, \qquad &\text{ in }\{n> 0\},\\
c=c_B, \qquad &\text{ in } \R^d \setminus \{n> 0\}
.
\end{cases}
\end{equation}
We take as initial density $n(x,0)$ the characteristic function of the interval $[-R_0,R_0]$, with $R_0>0$. Then, passing to the incompressible limit, the density remains always a patch, with support $[-R(t),R(t)]$. Therefore, we have
\begin{equation}\label{eq: ninfty vitro}
    n_\infty = \mathds{1}_{[-R(t),R(t)]}.
\end{equation}
Thus, as computed in \cite{LTWZ19}, the explicit solution is
\begin{equation*}
c_\infty=\begin{cases}
\dfrac{c_B \cosh(x)}{\cosh(R(t))}, &\text{ for } x\in [-R(t),R(t)],\\[0.3em]
c_B , &\text{ for }  x\notin [-R(t),R(t)], 
\end{cases}
\end{equation*}
and
\begin{equation}\label{eq: pinfty vitro}
p_\infty=\begin{cases}
-\dfrac{c_B \cosh(x)}{\cosh(R(t))}+c_B, &\text{ for } x\in [-R(t),R(t)],\\[0.3em]
0 , &\text{ for }  x\notin [-R(t),R(t)].
\end{cases}
\end{equation}
The velocity of the front is
$$R'(t)=c_B \tanh(R(t)).$$
We perform numerical simulations using our scheme for system \eqref{eq: 1D vitro} for $\gamma=80$ and compare the results to the exact solution \eqref{eq: ninfty vitro}-\eqref{eq: pinfty vitro}. We use the computational domain $[-5,5]$ and choose as initial data 
\begin{equation}\label{eq: initial data simulations}
 n(x,0)= \left(p_\infty(x,0)\right)^{\frac{1}{\gamma}},
\end{equation}
with $p_\infty$ defined by \eqref{eq: pinfty vitro}.
We also set $c_B=1$, $R(0)=1$, $\Delta x=0.025$ and $\Delta t=10^{-6}$, \cf Fig.~\ref{fig: in VITRO 1D}.
\begin{figure}[hbt!]
	\centering
	\begin{subfigure}[b]{0.48\textwidth}
		\includegraphics[width=\textwidth]{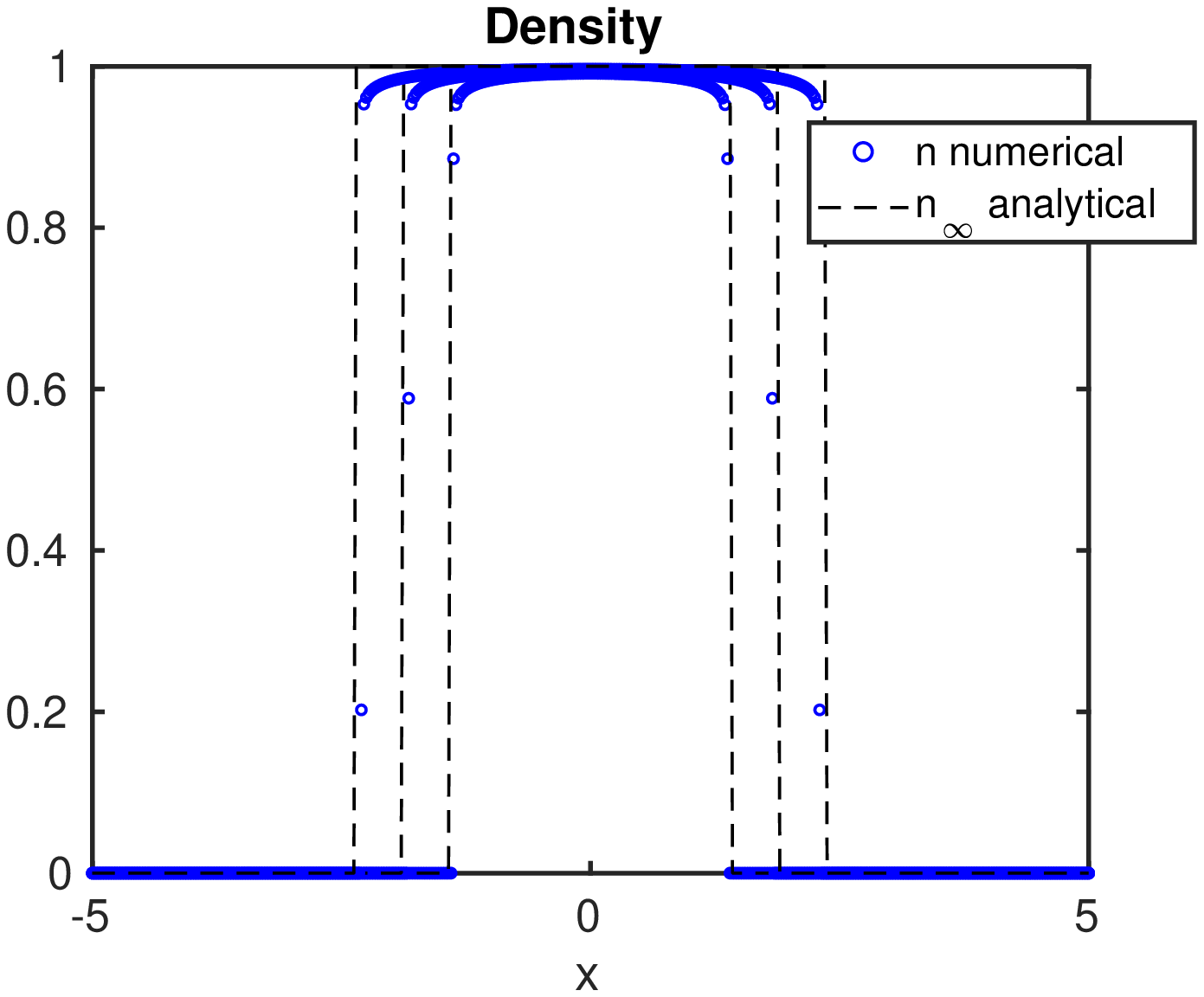}
	\end{subfigure}
	\begin{subfigure}[b]{0.48\textwidth}
		\includegraphics[width=\textwidth]{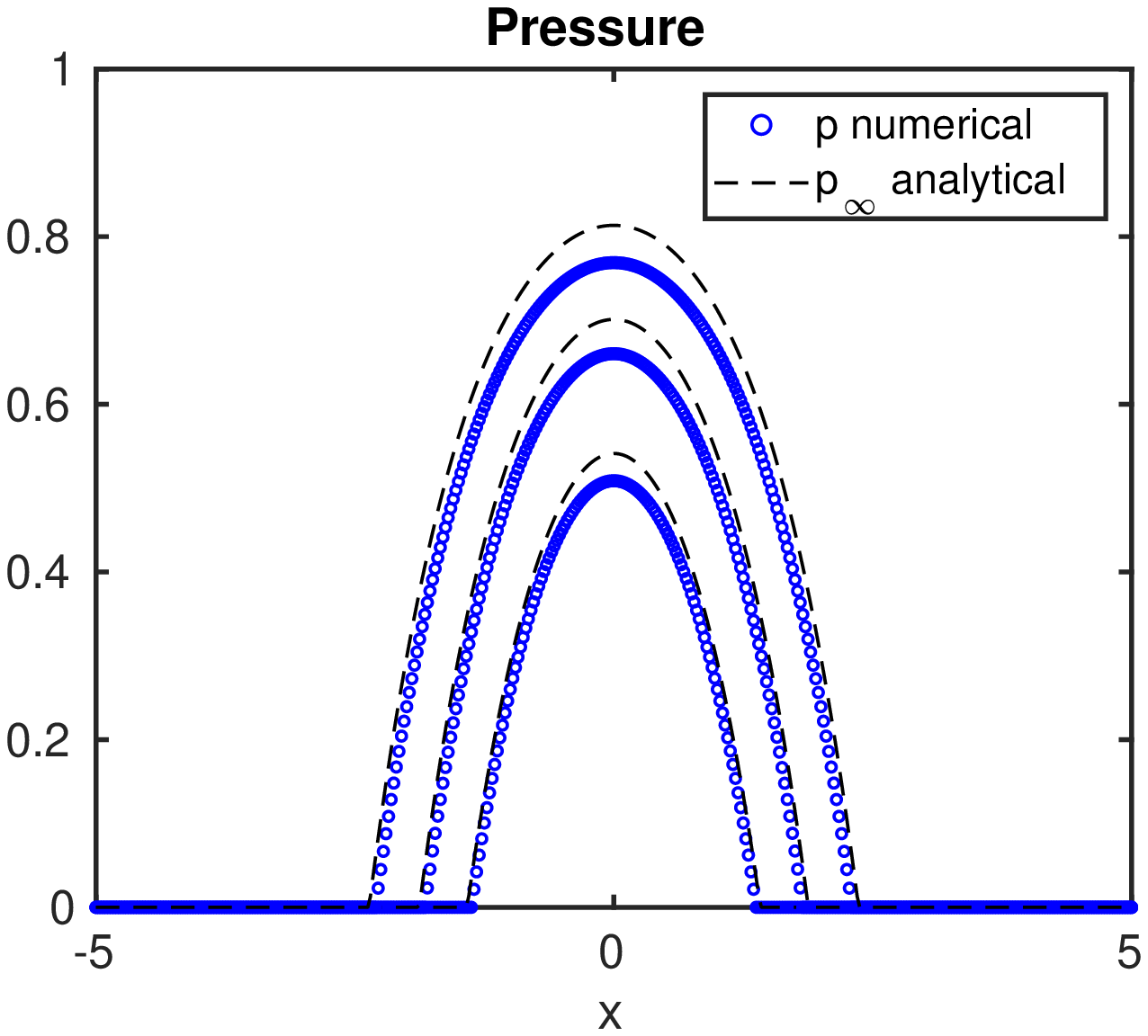}
	\end{subfigure}
	\caption{\textit{In vitro} model in 1D: comparison between the numerical solution and the analytical solution at different times, t=0.5, t=1, t=1.5, with $\gamma=80$, $\Delta x=0.025$ and $\Delta t=10^{-6}$.}
	\label{fig: in VITRO 1D}
\end{figure}
\subsubsection{In vivo model: comparison with the exact solution}
Using again a characteristic function as initial data, in the limit $\gamma \rightarrow \infty$ the model \eqref{eq: nutri_vivo_avas} reads
\begin{equation*}
-\partial_{xx}c_\infty +c_\infty = (c_B - c_\infty)\mathds{1}_{\{n=0\}},
\end{equation*}
with ${\{n=0\}}=\R \setminus [-R(t), R(t)]$.
Thus, the explicit solution is given by
\begin{equation*}
c_\infty=\begin{cases}
\dfrac{c_B}{e^{R(t)}}\cosh(R(t)), &\text{ for } x\in [-R(t),R(t)],\\[0.3em]
c_B - c_B \sinh(R(t))e^{-|x|} , &\text{ for }  x\notin [-R(t),R(t)], 
\end{cases}
\end{equation*}
\cf \cite{LTWZ19}. The limit pressure is 
\begin{equation}\label{eq: pinfty vivo}
p_\infty=\begin{cases}
-\dfrac{c_B G_0}{e^{R(t)}}\cosh(x)+ \dfrac{c_B G_0}{e^{R(t)}}\cosh(R(t)), &\text{ for } x\in [-R(t),R(t)],\\[0.3em]
0 , &\text{ for }  x\notin [-R(t),R(t)],
\end{cases}
\end{equation}
with a front invasion speed given by
$$R'(t)=c_B G_0 \frac{\sinh{R(t)}}{e^{R(t)}}.$$
As for the previous case, we perform numerical simulations using our scheme for the system \eqref{eq: nutri_vivo_avas} with $\gamma=80$ and compare the results to the exact solution. As before we choose \eqref{eq: initial data simulations} as initial data where the pressure is defined by \eqref{eq: pinfty vivo} and we set $c_B=1$, $R(0)=1$, $\Delta x=0.025$ and $\Delta t=10^{-6}$, \cf Fig.~\ref{fig: in vivo 1D}. As in \cite{LTWZ}, we notice that the scheme is more accurate for the \textit{in vivo} model than for the \textit{in vitro}.

\begin{figure}[hbt!]
	\centering
	\begin{subfigure}[b]{0.48\textwidth}
		\includegraphics[width=\textwidth]{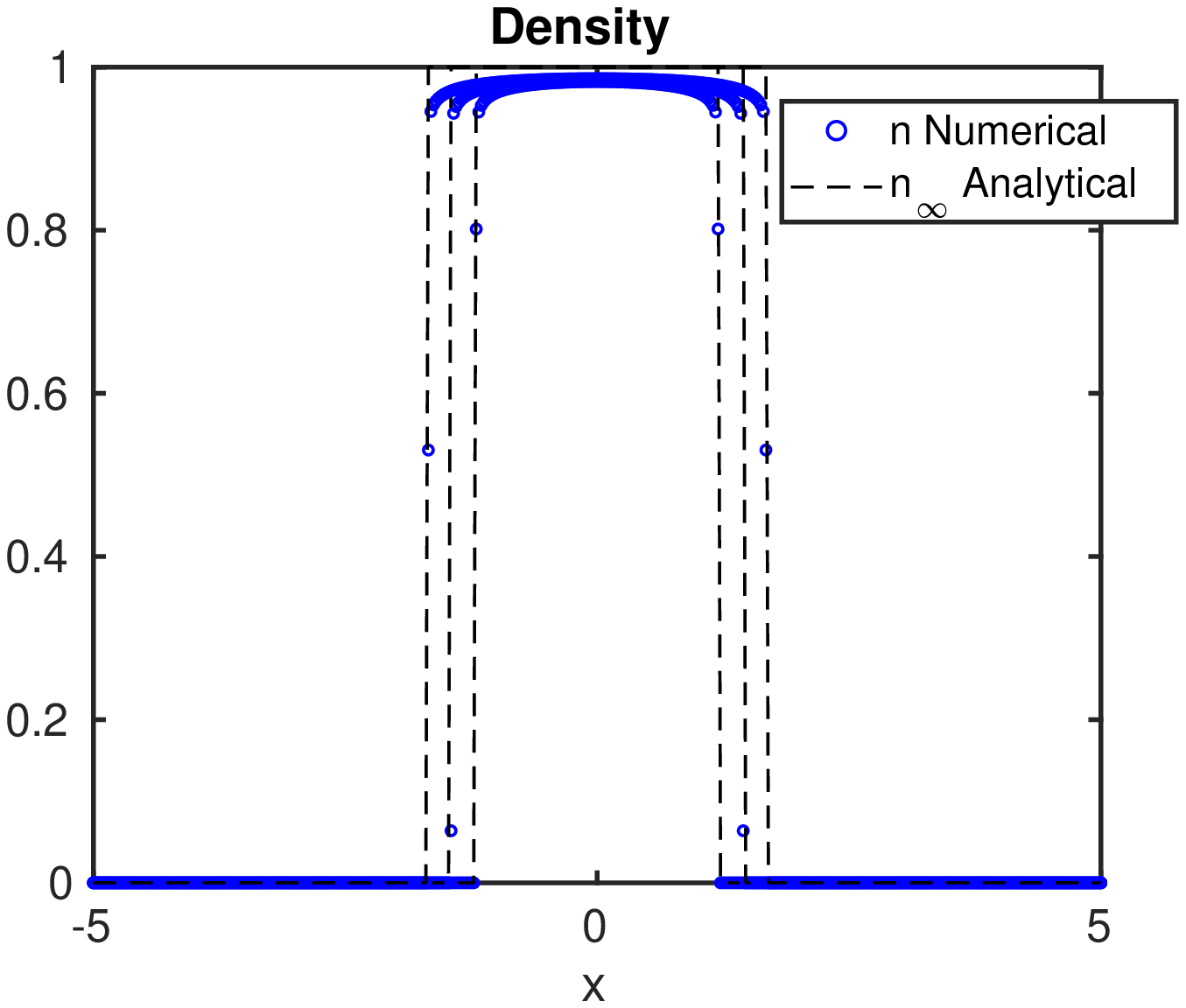}
	\end{subfigure}
	\begin{subfigure}[b]{0.48\textwidth}
		\includegraphics[width=\textwidth]{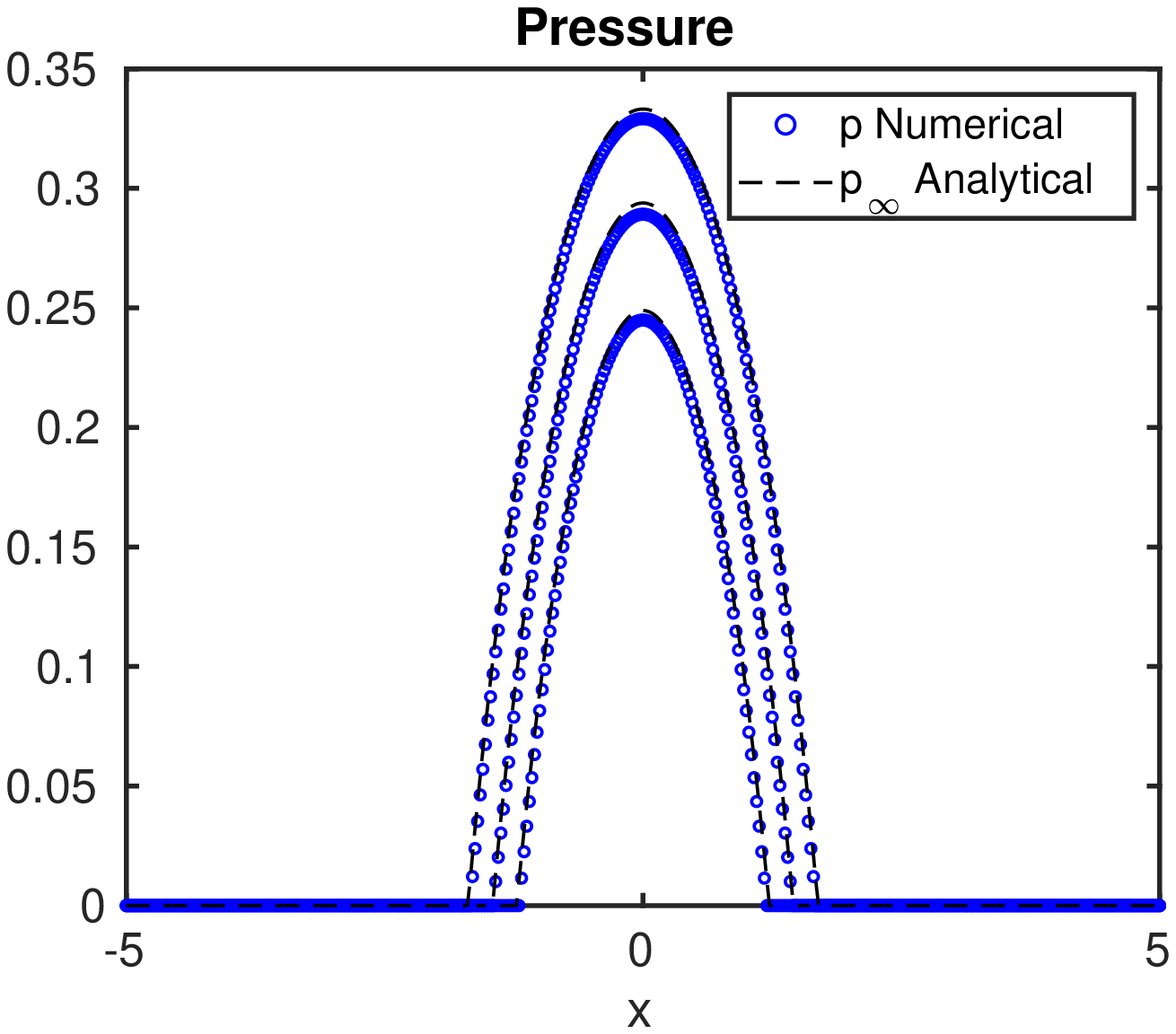}
	\end{subfigure}
	\caption{\textit{In vivo} model in 1D: comparison between the numerical solution and the analytical solution at different times, t=0.5, t=1, t=1.5, with $\gamma=80$, $\Delta x=0.025$ and $\Delta t=10^{-6}$.}\label{fig: in vitro 1D}
	\label{fig: in vivo 1D}
\end{figure}
\subsection{Two-species model: proliferating and necrotic cells}
We consider a model in including a second species of cells. Indeed, at the early stages of its growth, the tumor mass develop a necrotic core of dead cells, which is surrounded by a rim of quiescent or proliferating cells. The model reads
\begin{equation}\label{eq: necrotic}
\begin{dcases}
    \partialt{n_P} - \partialx{}\prt*{n_P \partialx p} = n_P G(c),\\[0.2em]
    \partialt{n_D} - \partialx{}\prt*{n_D \partialx p} = n_P |G(c)|_-,
    \end{dcases}
\end{equation}
where $n_P$ and $n_D$ represent the cell densities of proliferating and necrotic (dead) cells. The total population density and the pressure are, respectively, $n=n_P + n_D, \; p=n^\gamma$.

Since in this case the growth rate $G=G(c)$ can be negative, the proliferating cells die and turn into necrotic with the same rate. In particular, we assure there exist a positive constant $\bar{c}$ such that $G(c)<0$ if $c<\bar{c}$, to indicate that the cells die because of the lack of nutrients.

We use the scheme \eqref{scheme_implicit} for both the equations on $n_P$ and $n_D$ and we test it for both \eqref{eq: sysnutri_vitro} and \eqref{eq: nutri_vivo_avas}. 
We take as computational domain $[-6,6]$, and we set $c_B=1$,
\begin{equation*}
    G(c)=\begin{cases}
        12 &\text{ if } c<0.4,\\[0.2em]
        -15 &\text{ if } c\geq 0.4,
    \end{cases}
\end{equation*}
and as initial data 
\begin{equation*}
    n_P^0= \mathds{1}_{[-1,1]}, \qquad n^0_D=0.
\end{equation*}
The numerical simulations for the \textit{in vitro} and \textit{in vivo} environments are displayed along time in Fig.~\ref{fig: in vitro necrotic 1D} and Fig.~\ref{fig: in vivo necrotic 1D}, respectively.
\begin{figure}[hbt!]
	\centering
	\begin{subfigure}[b]{0.3\textwidth}
		\includegraphics[width=\textwidth]{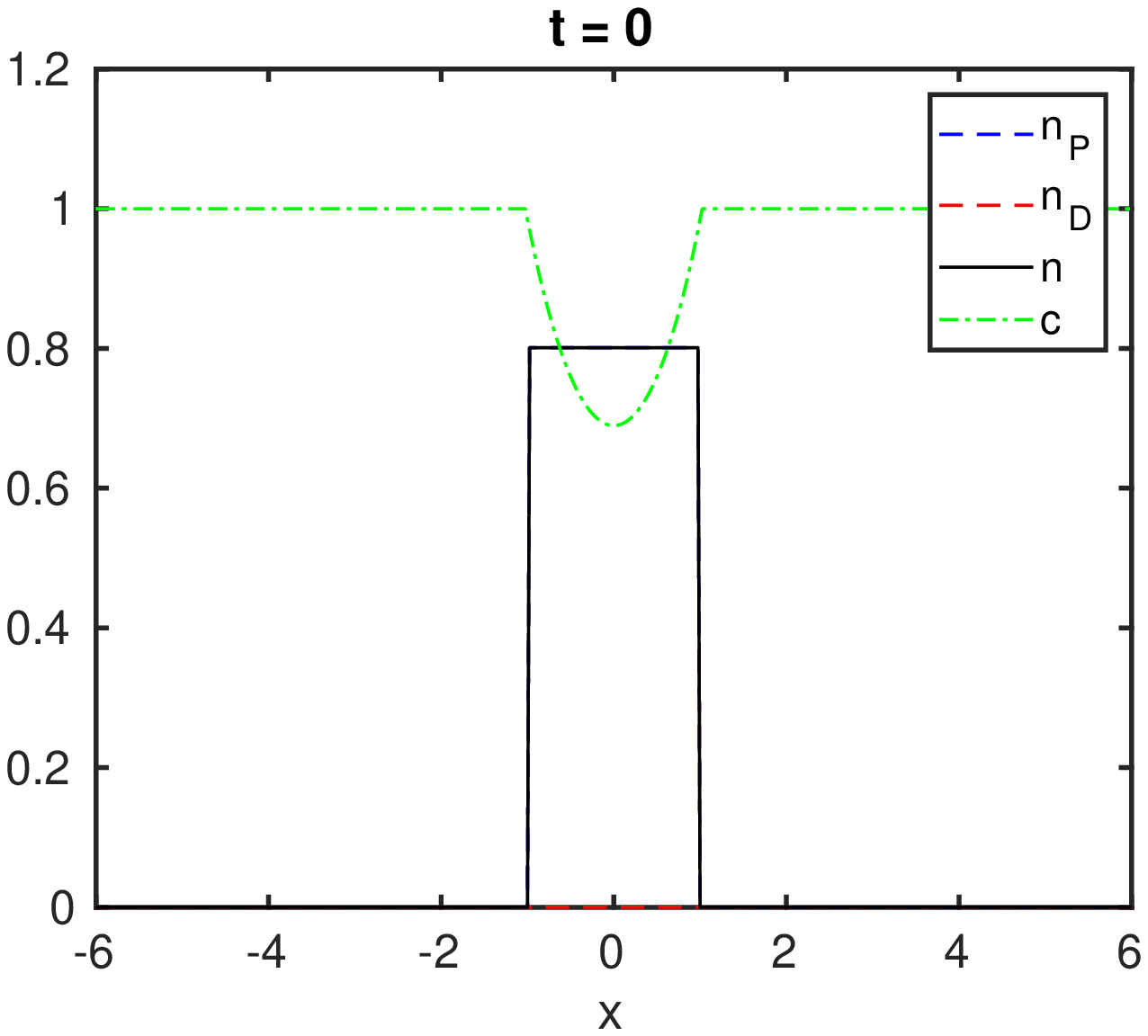}
	\end{subfigure}
	\begin{subfigure}[b]{0.3\textwidth}
		\includegraphics[width=\textwidth]{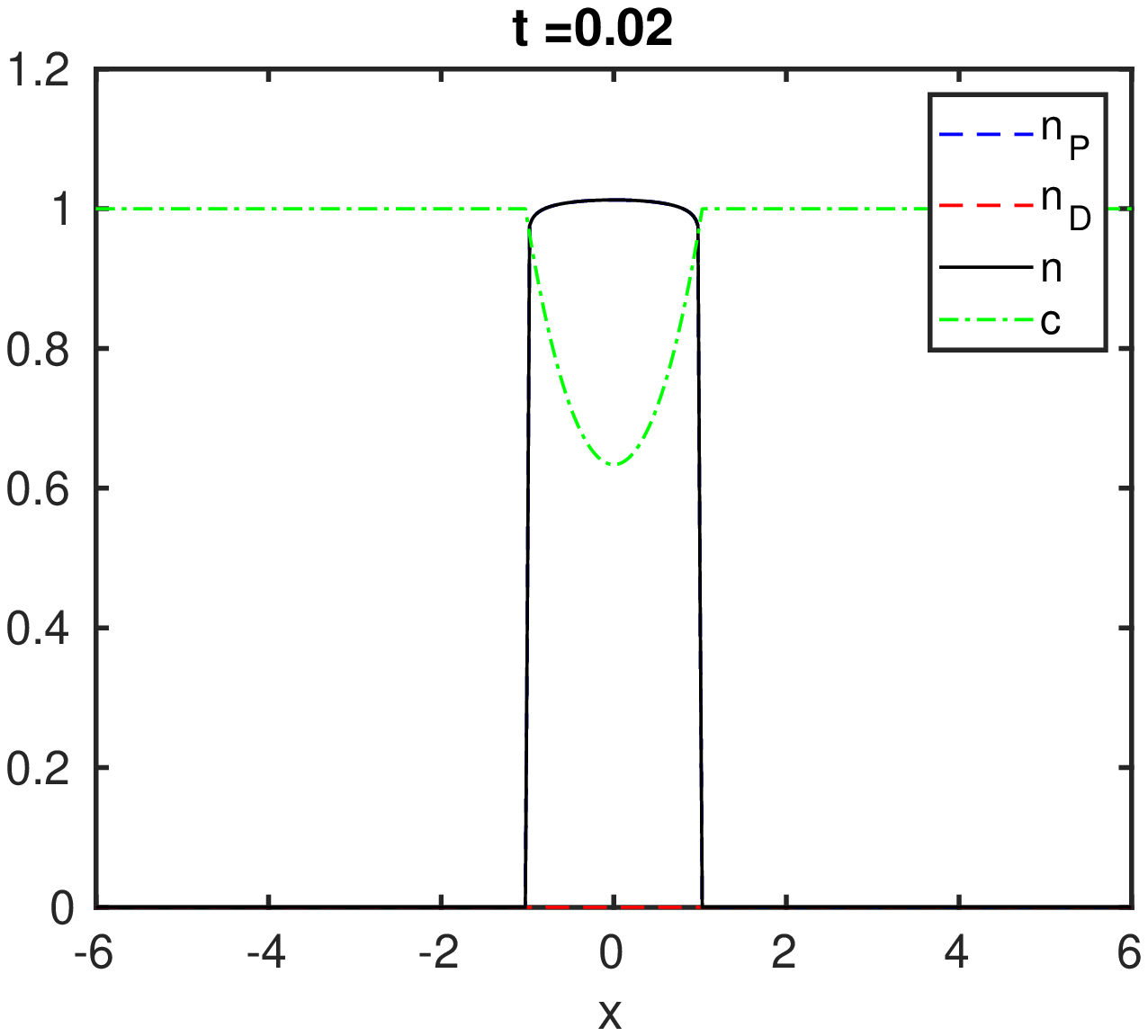}
	\end{subfigure}
	\begin{subfigure}[b]{0.3\textwidth}
		\includegraphics[width=\textwidth]{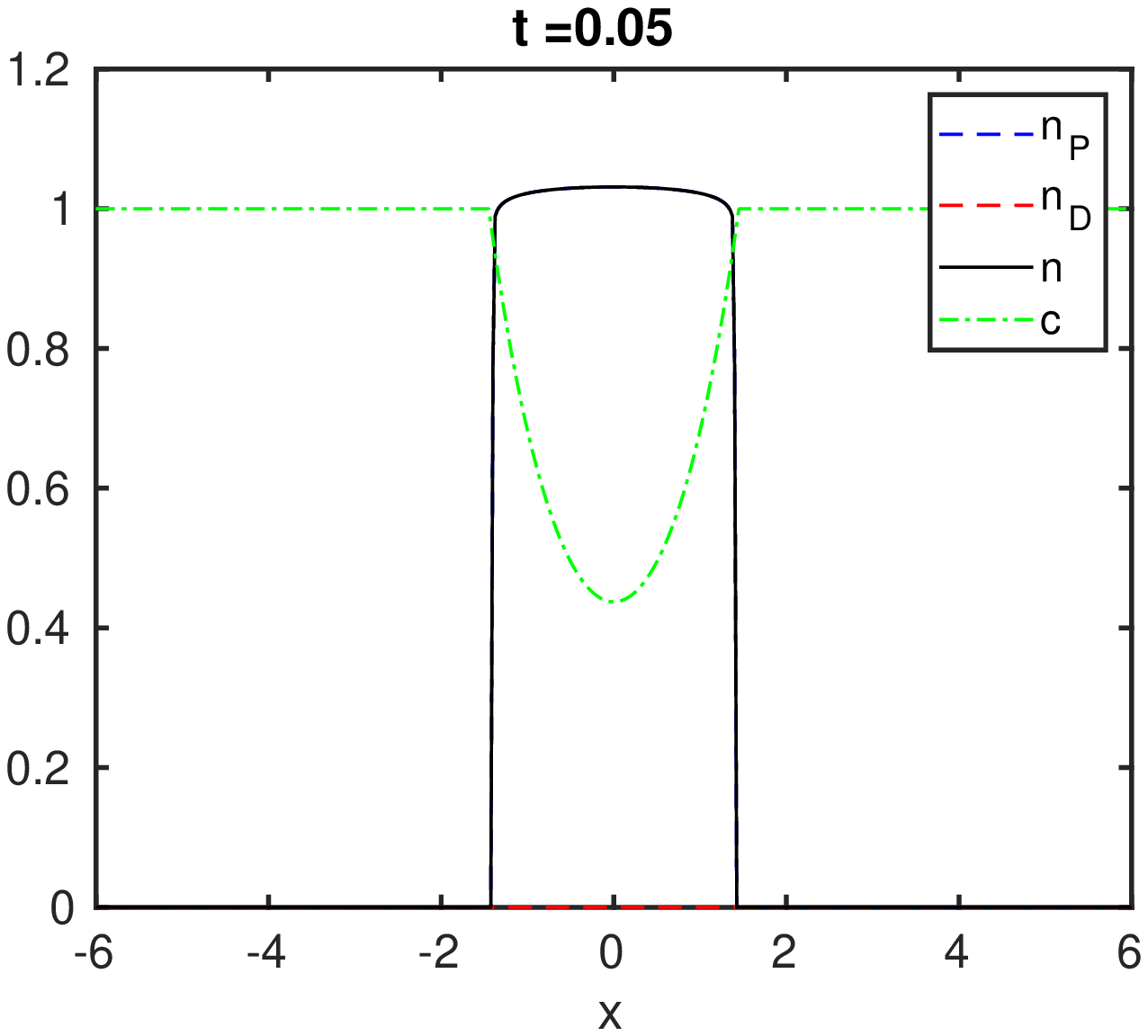}
	\end{subfigure}\\
	\begin{subfigure}[b]{0.3\textwidth}
		\includegraphics[width=\textwidth]{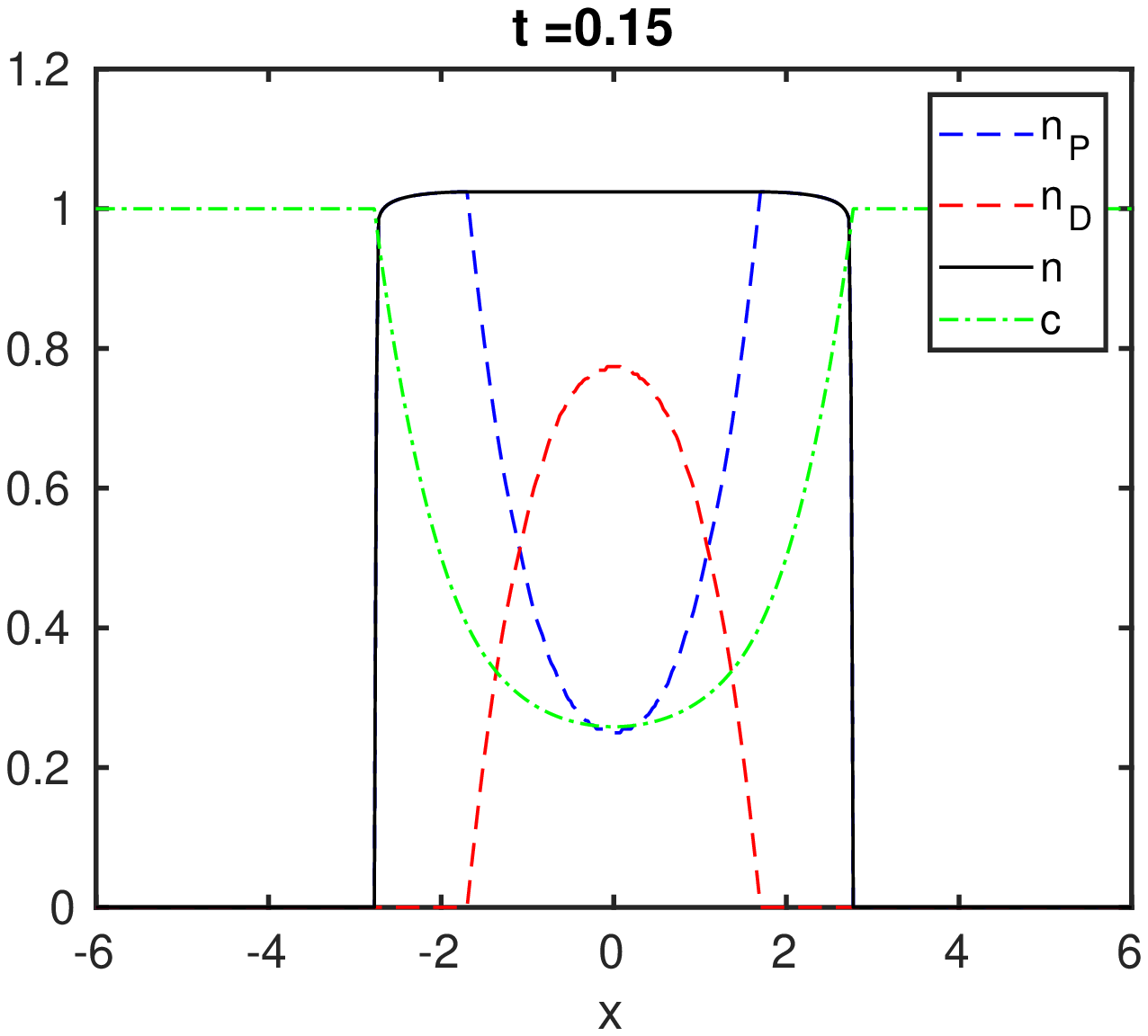}
	\end{subfigure}
	\begin{subfigure}[b]{0.3\textwidth}
		\includegraphics[width=\textwidth]{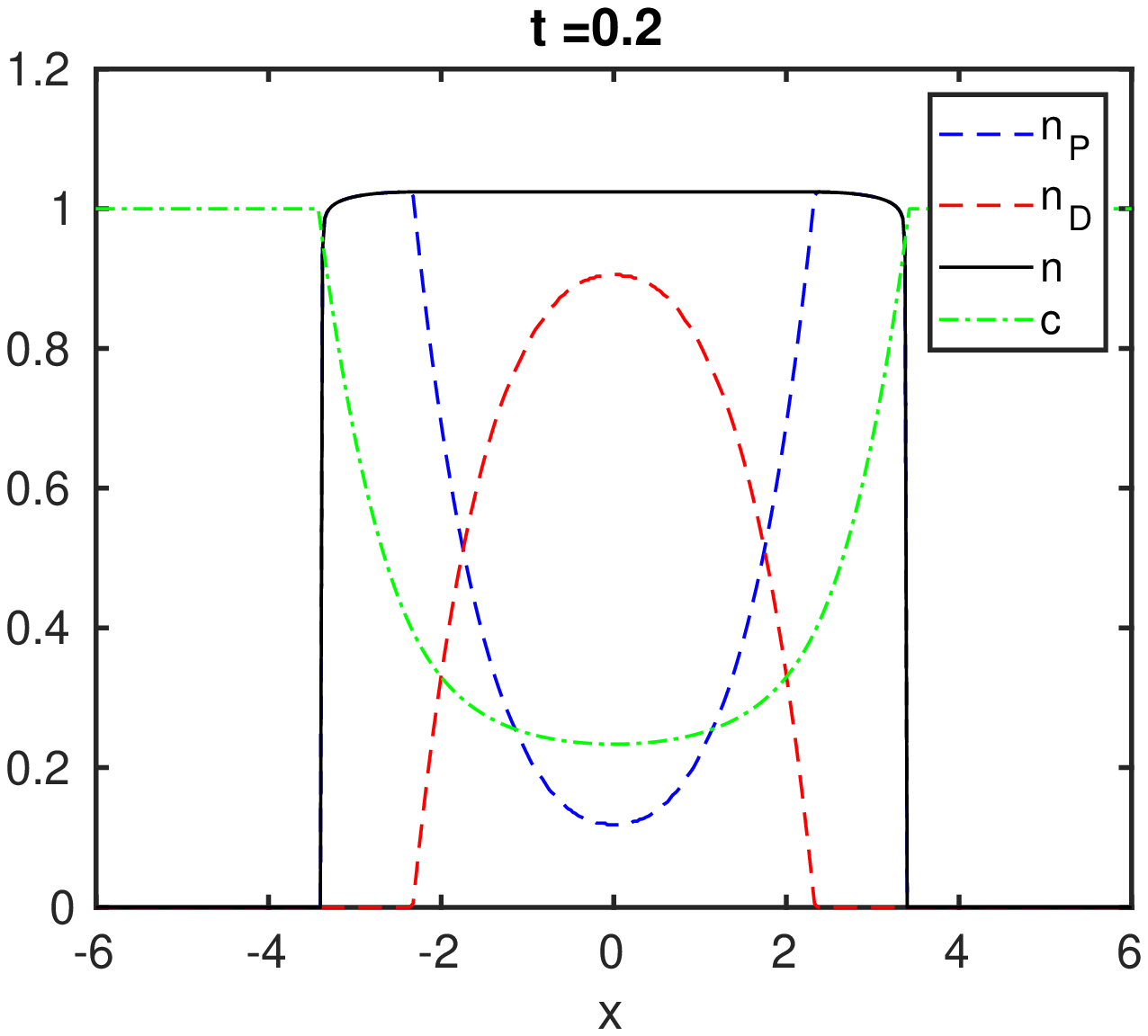}
	\end{subfigure}
	\begin{subfigure}[b]{0.3\textwidth}
		\includegraphics[width=\textwidth]{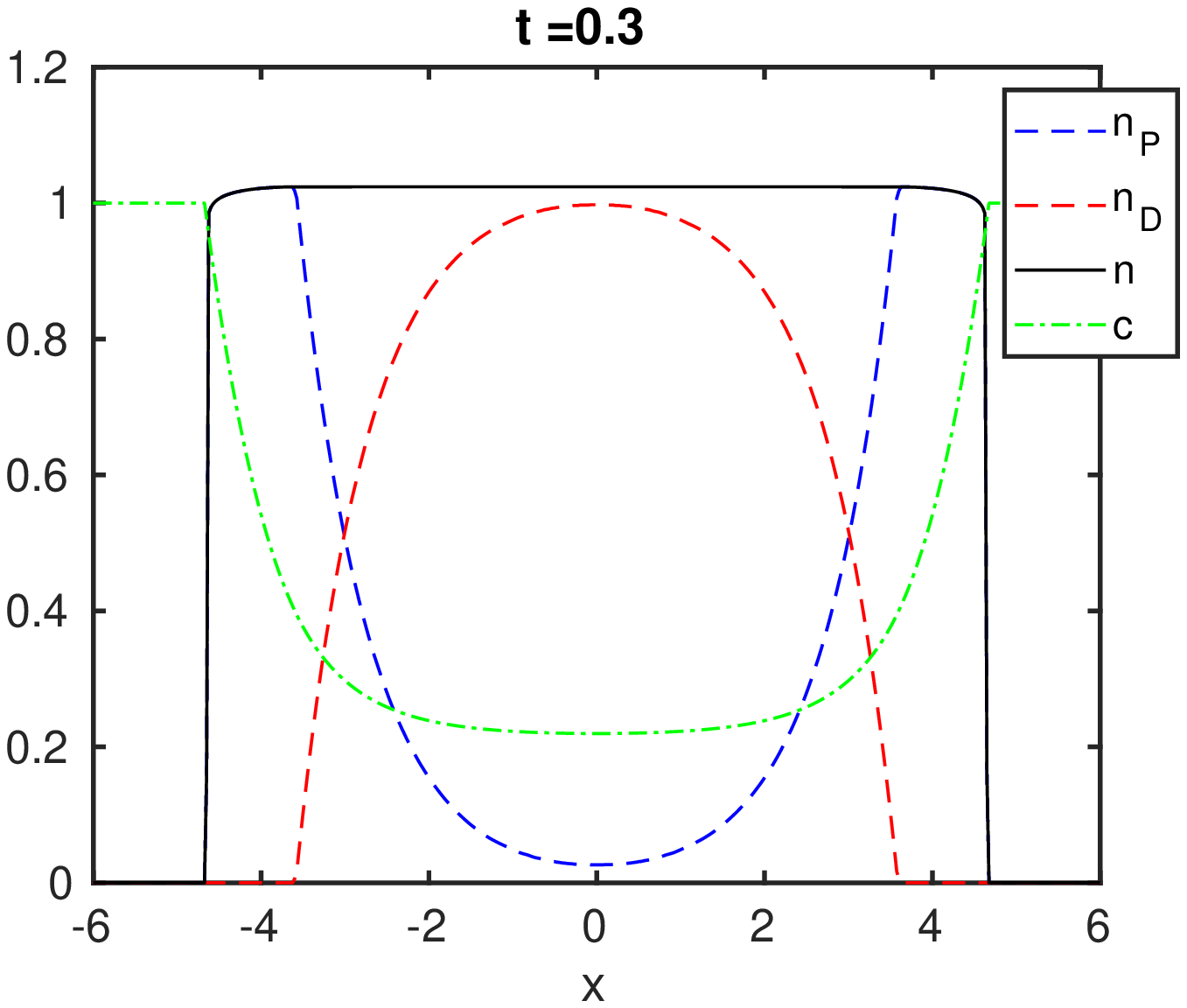}
	\end{subfigure}
	\caption{\textit{In vitro} two-species model in 1D: plot of $n_P, n_D, n, c$ with $\gamma=80$, $\Delta x=0.025$ and $\Delta t=10^{-4}$.} 
	\label{fig: in vitro necrotic 1D}
\end{figure}
\begin{figure}[hbt!]
	\centering
	\begin{subfigure}[b]{0.3\textwidth}
		\includegraphics[width=\textwidth]{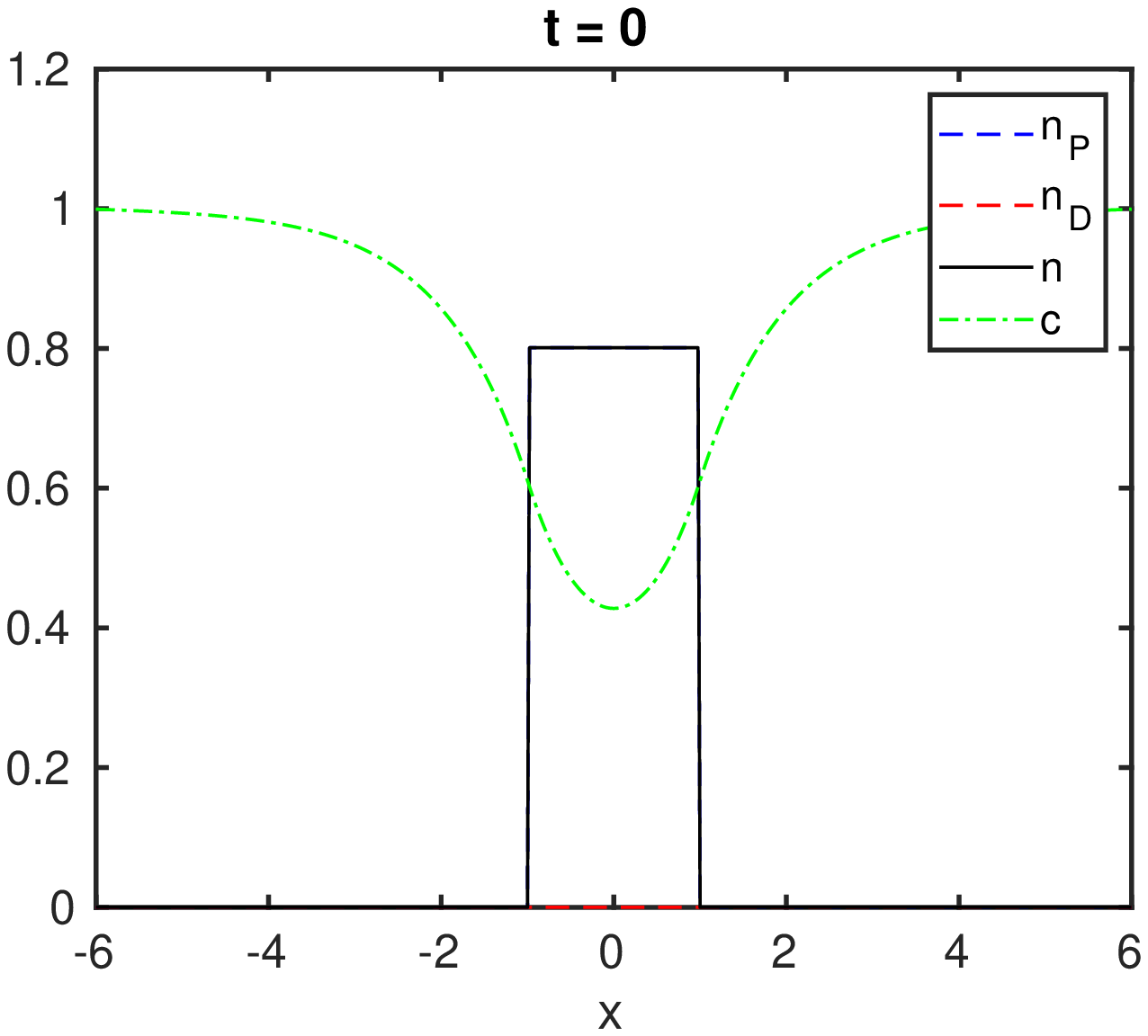}
	\end{subfigure}
	\begin{subfigure}[b]{0.3\textwidth}
		\includegraphics[width=\textwidth]{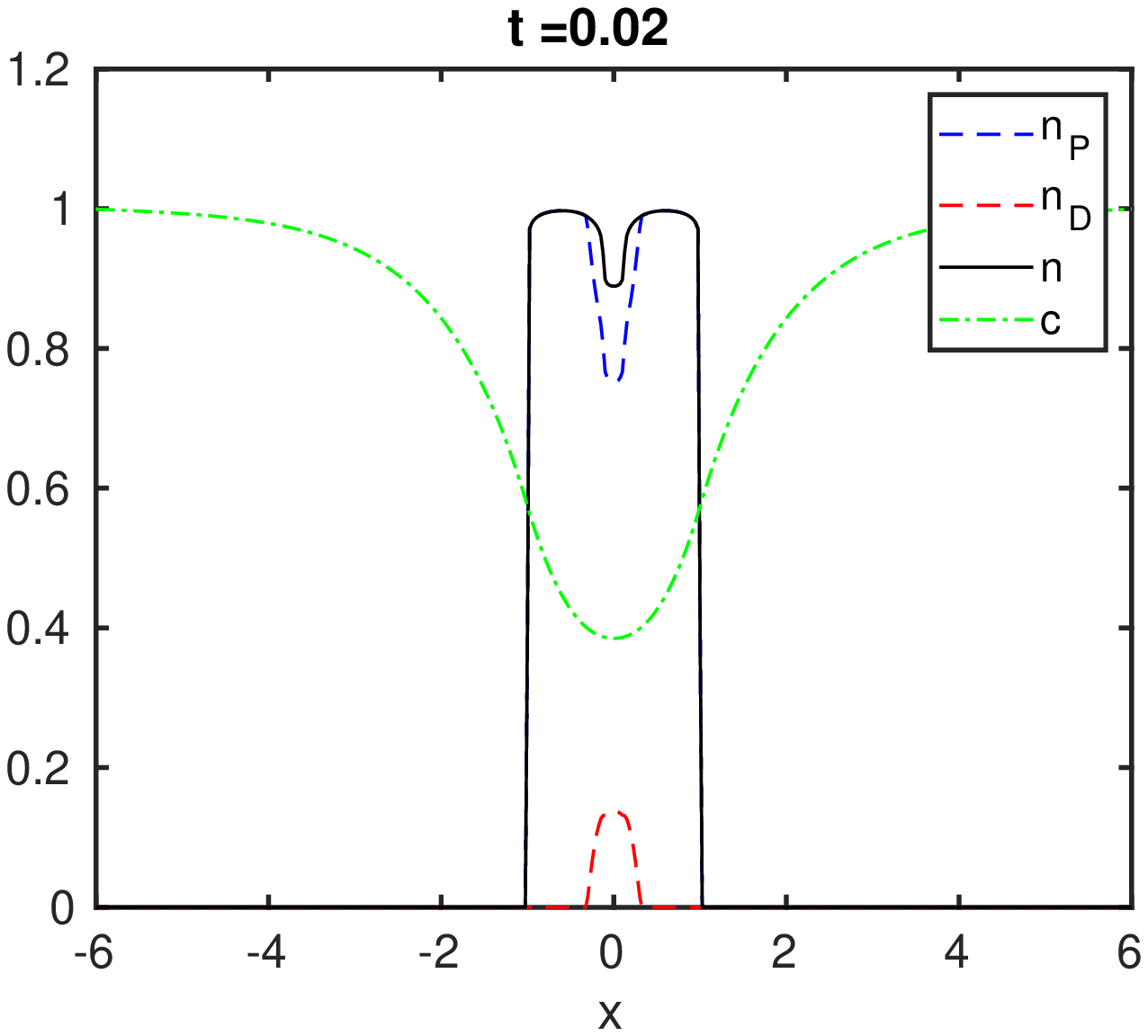}
	\end{subfigure}
	\begin{subfigure}[b]{0.3\textwidth}
		\includegraphics[width=\textwidth]{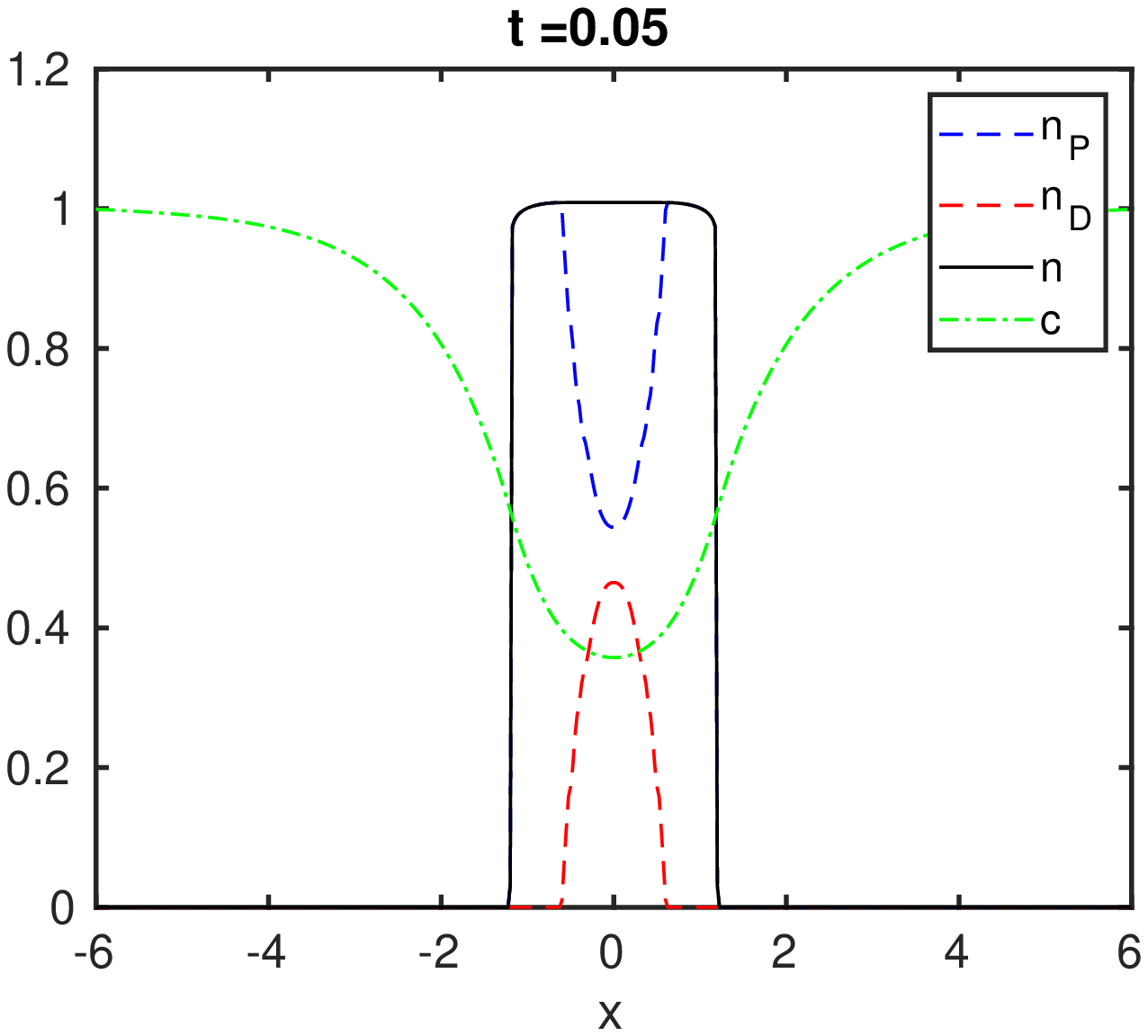}
	\end{subfigure}\\
	\begin{subfigure}[b]{0.3\textwidth}
		\includegraphics[width=\textwidth]{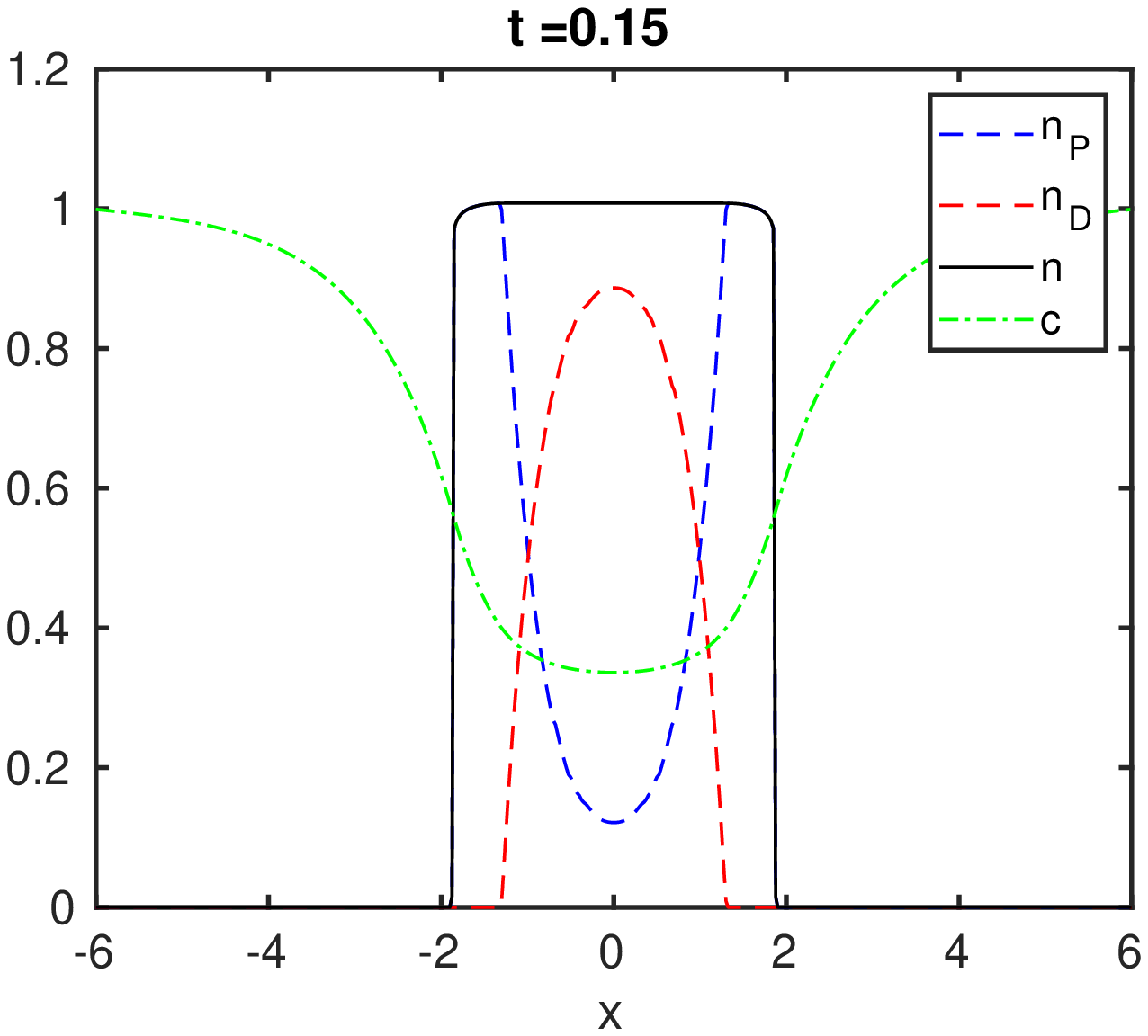}
	\end{subfigure} 
	\begin{subfigure}[b]{0.3\textwidth}
		\includegraphics[width=\textwidth]{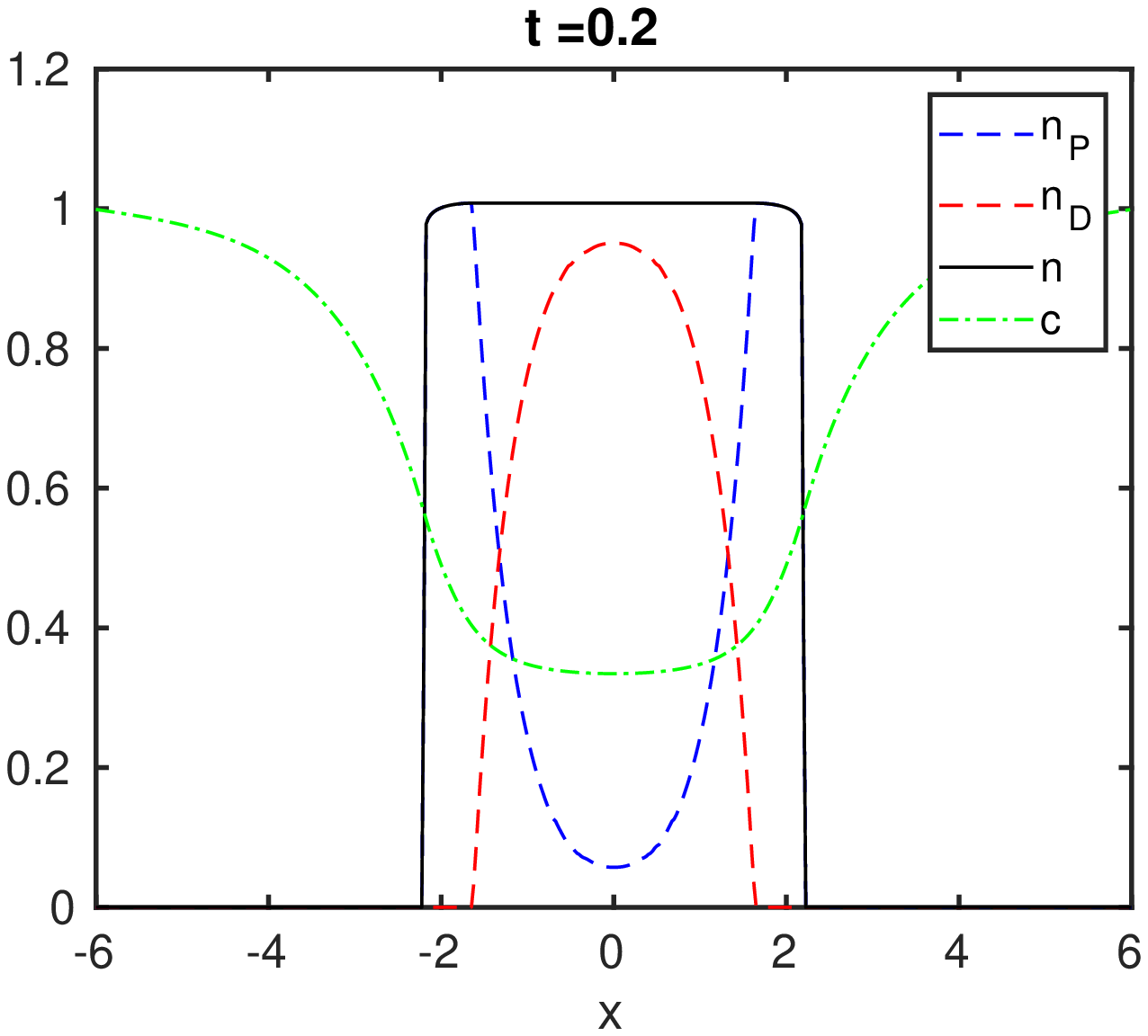}
	\end{subfigure}
	\begin{subfigure}[b]{0.3\textwidth}
		\includegraphics[width=\textwidth]{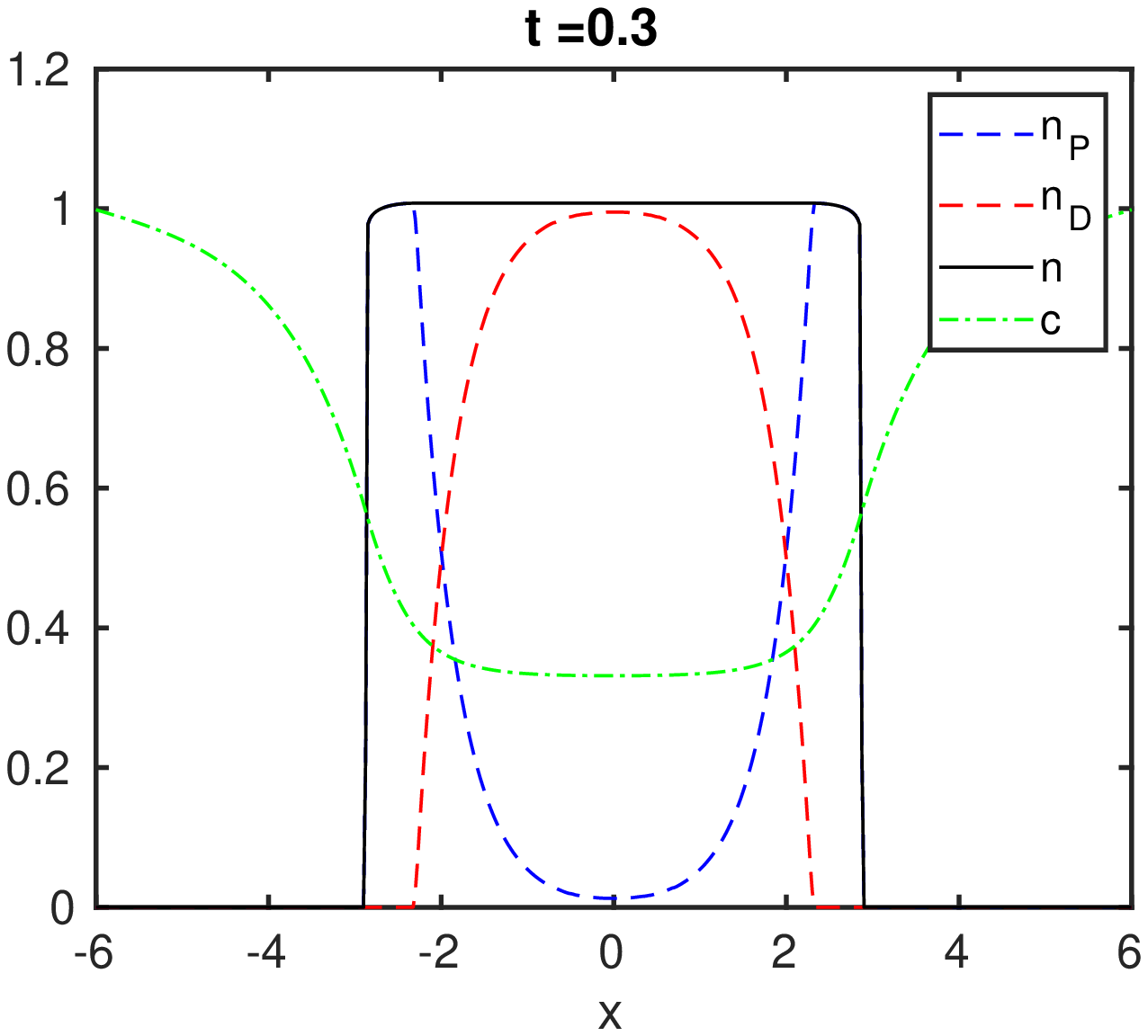}
	\end{subfigure}
	\caption{\textit{In vivo} two-species model in 1D: plot of $n_P, n_D, n, c$ with $\gamma=80$, $\Delta x=0.025$ and $\Delta t=10^{-4}$.} 
	\label{fig: in vivo necrotic 1D}
\end{figure}

\subsection{2D model: the focusing problem}
The focusing solution of the porous medium equation is the solution of Eq. \eqref{eq: n} with an initial data whose support is contained outside of a compact set. At finite time the empty bubble closes up and the topological change of the support generates a singularity of the pressure gradient. In \cite{DP}, the authors show that the pressure gradient is uniformly bounded with respect to $\gamma$ in $L^4(\R^d \times (0,T))$. Then, they prove the sharpness of this uniform bound using the focusing solution as counterexample.
\begin{figure}[hbt!]  
	\centering
		\begin{subfigure}[b]{0.32\textwidth}
		\includegraphics[width=\textwidth]{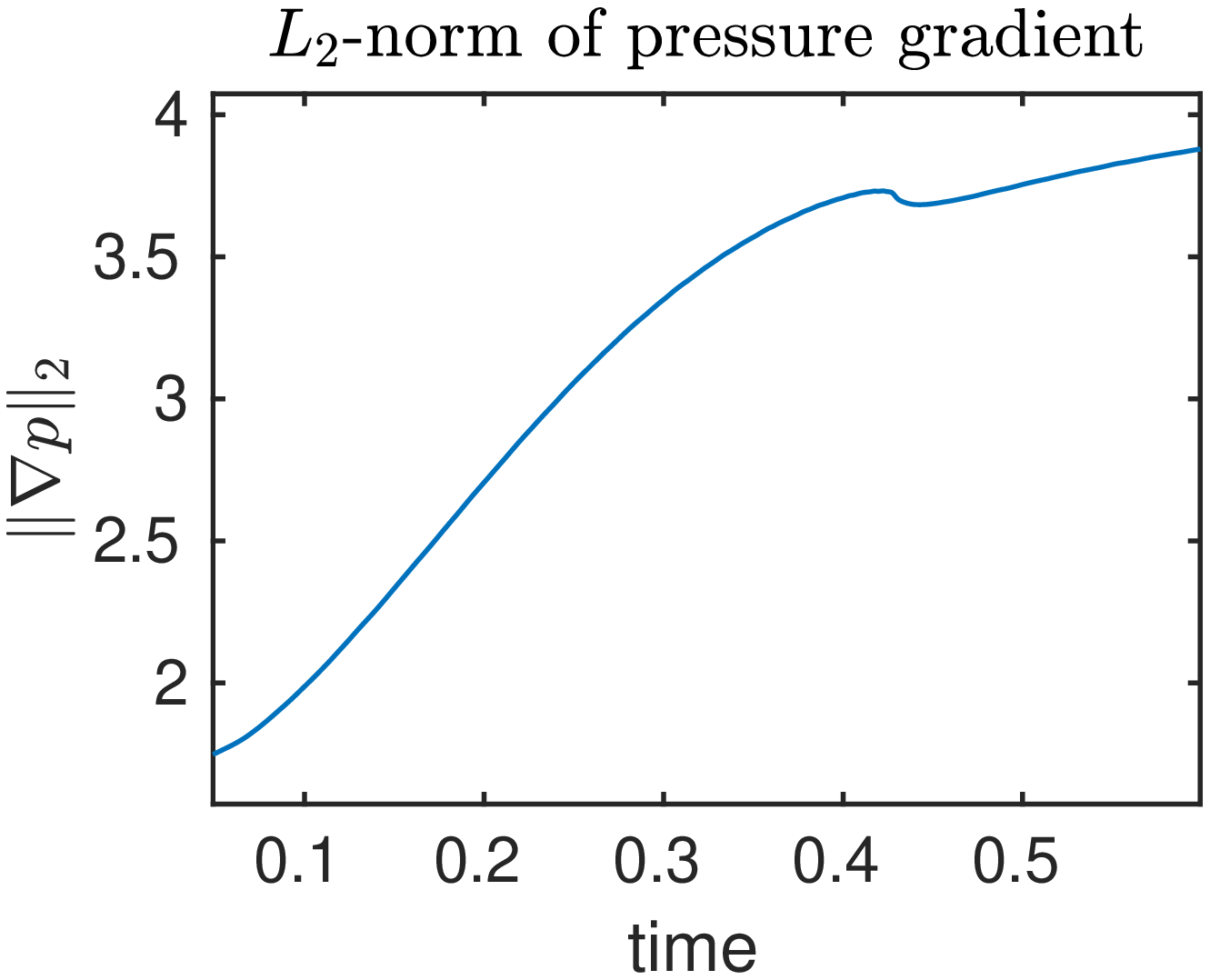}
	\end{subfigure}
	\begin{subfigure}[b]{0.32\textwidth}
		\includegraphics[width=\textwidth]{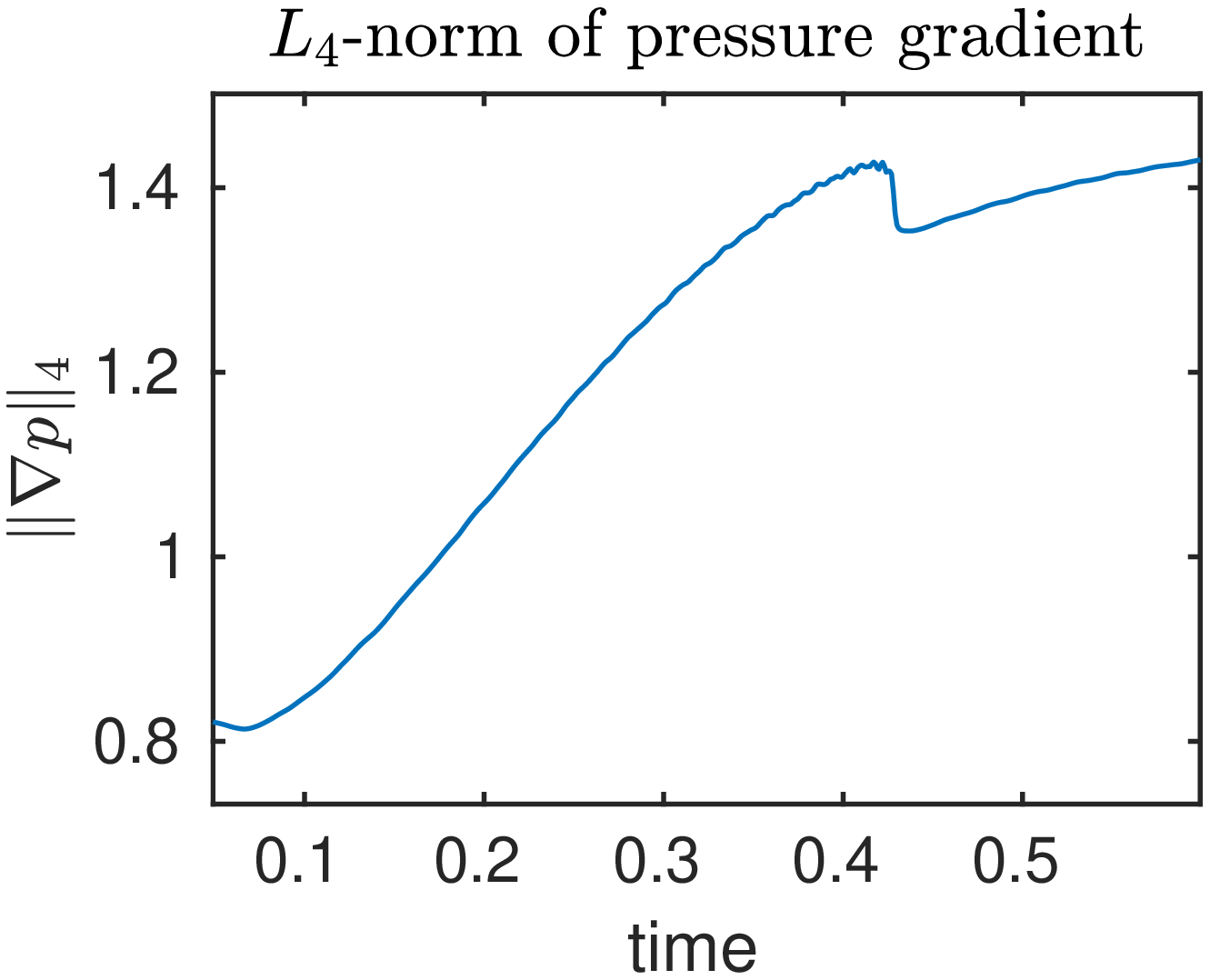}
	\end{subfigure}
	\begin{subfigure}[b]{0.32\textwidth}
		\includegraphics[width=\textwidth]{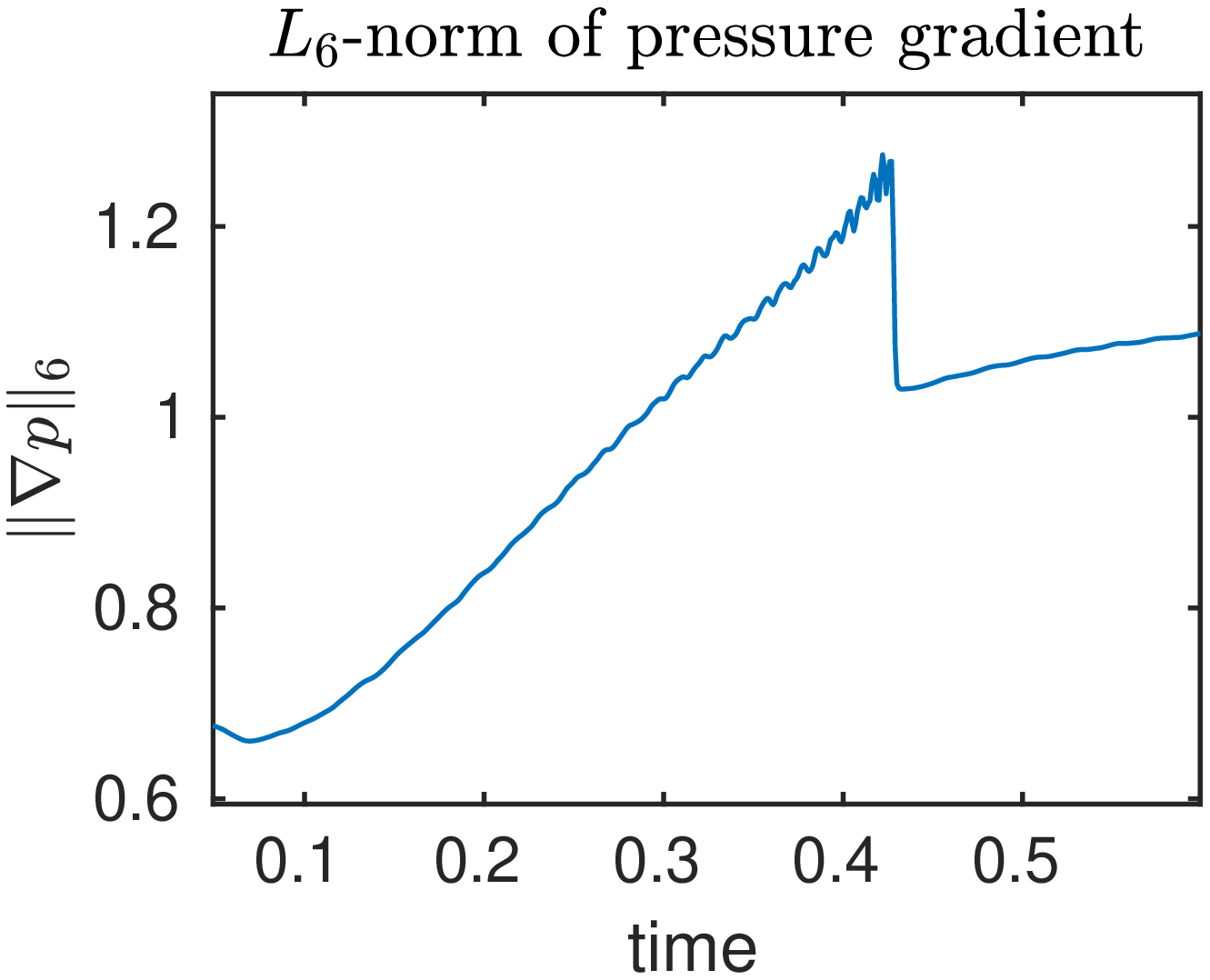}
	\end{subfigure}\\[0.7em]
		\begin{subfigure}[b]{0.32\textwidth}
		\includegraphics[width=\textwidth]{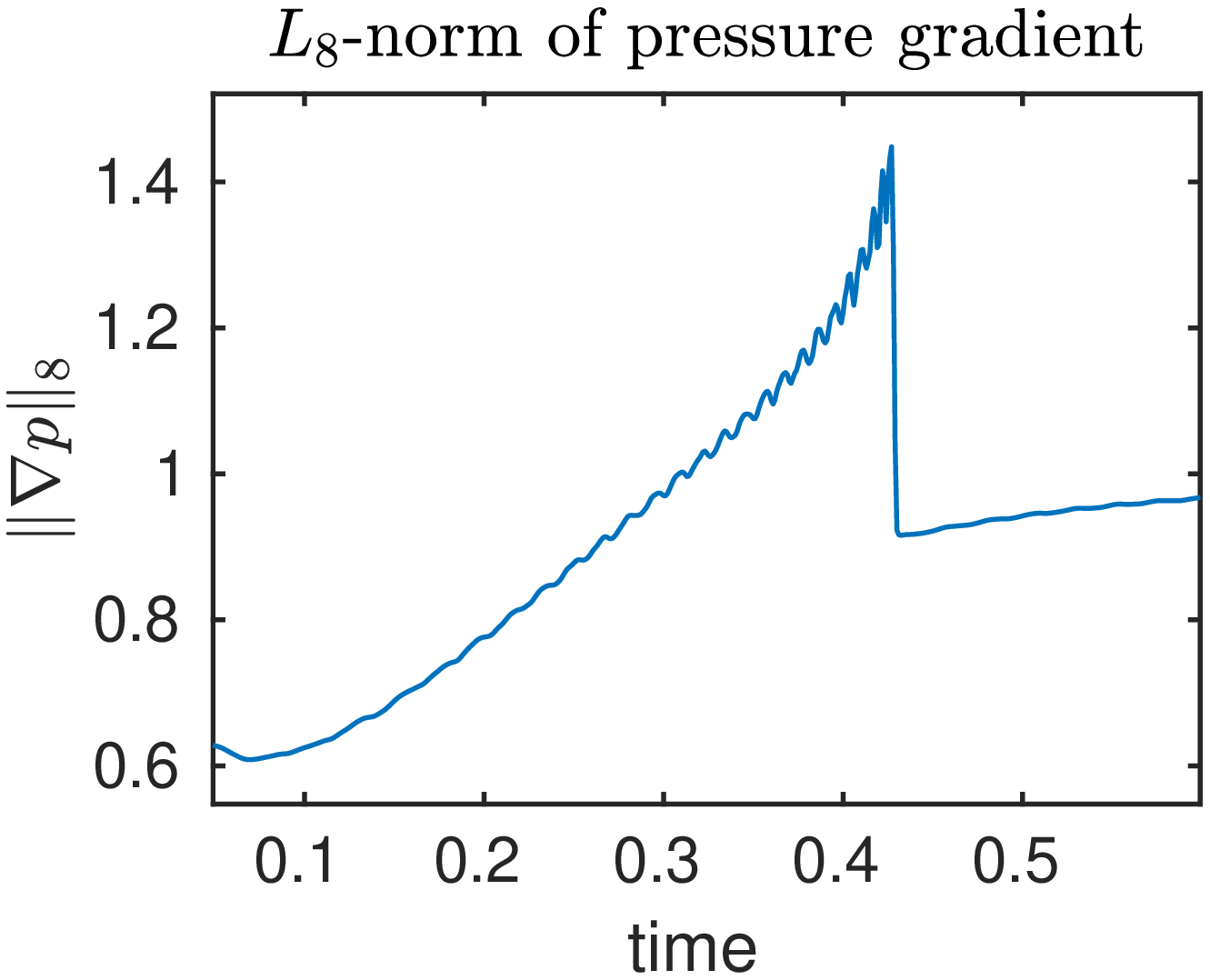}
	\end{subfigure}
	\begin{subfigure}[b]{0.32\textwidth}
		\includegraphics[width=\textwidth]{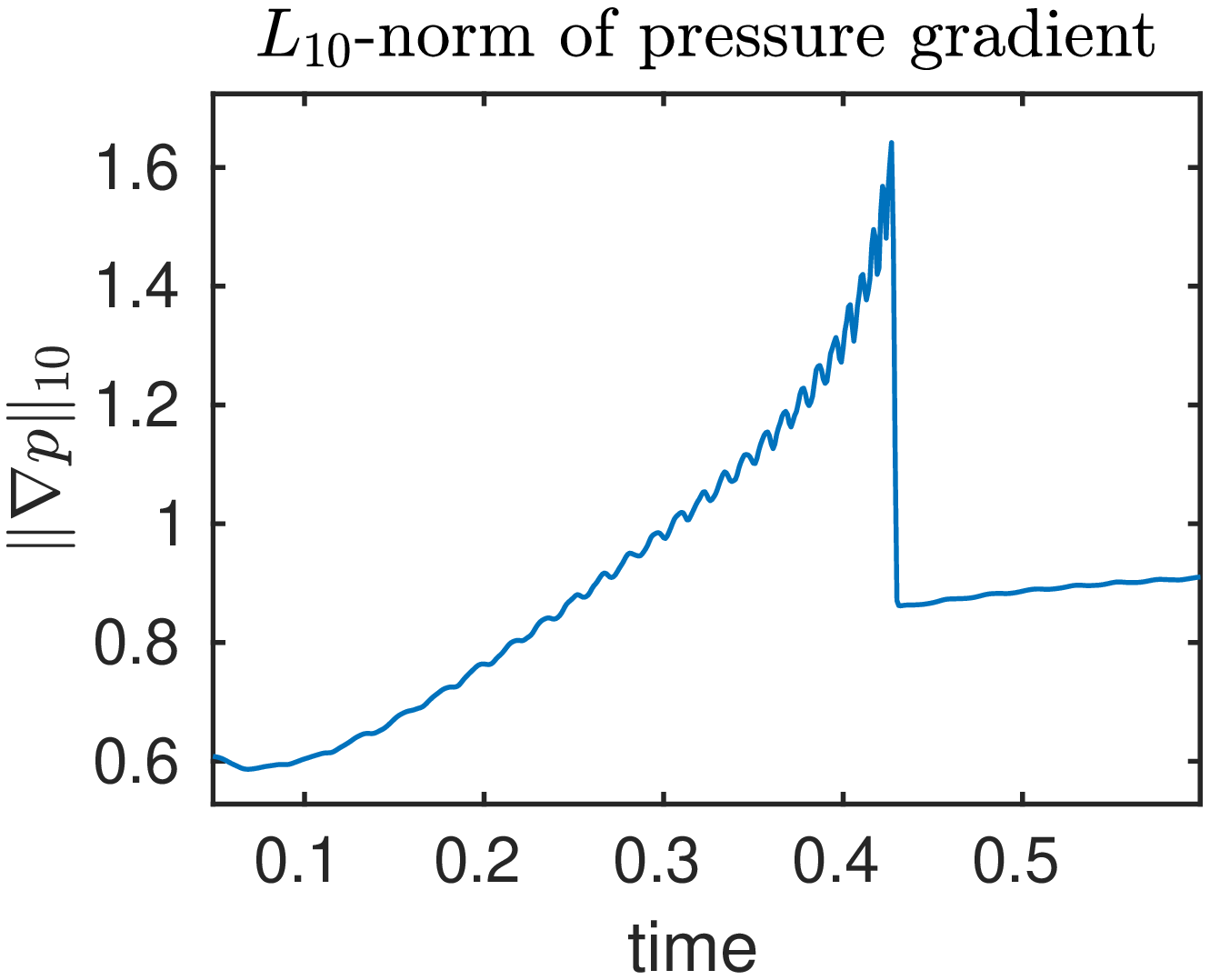}
	\end{subfigure}
	\begin{subfigure}[b]{0.32\textwidth}
		\includegraphics[width=\textwidth]{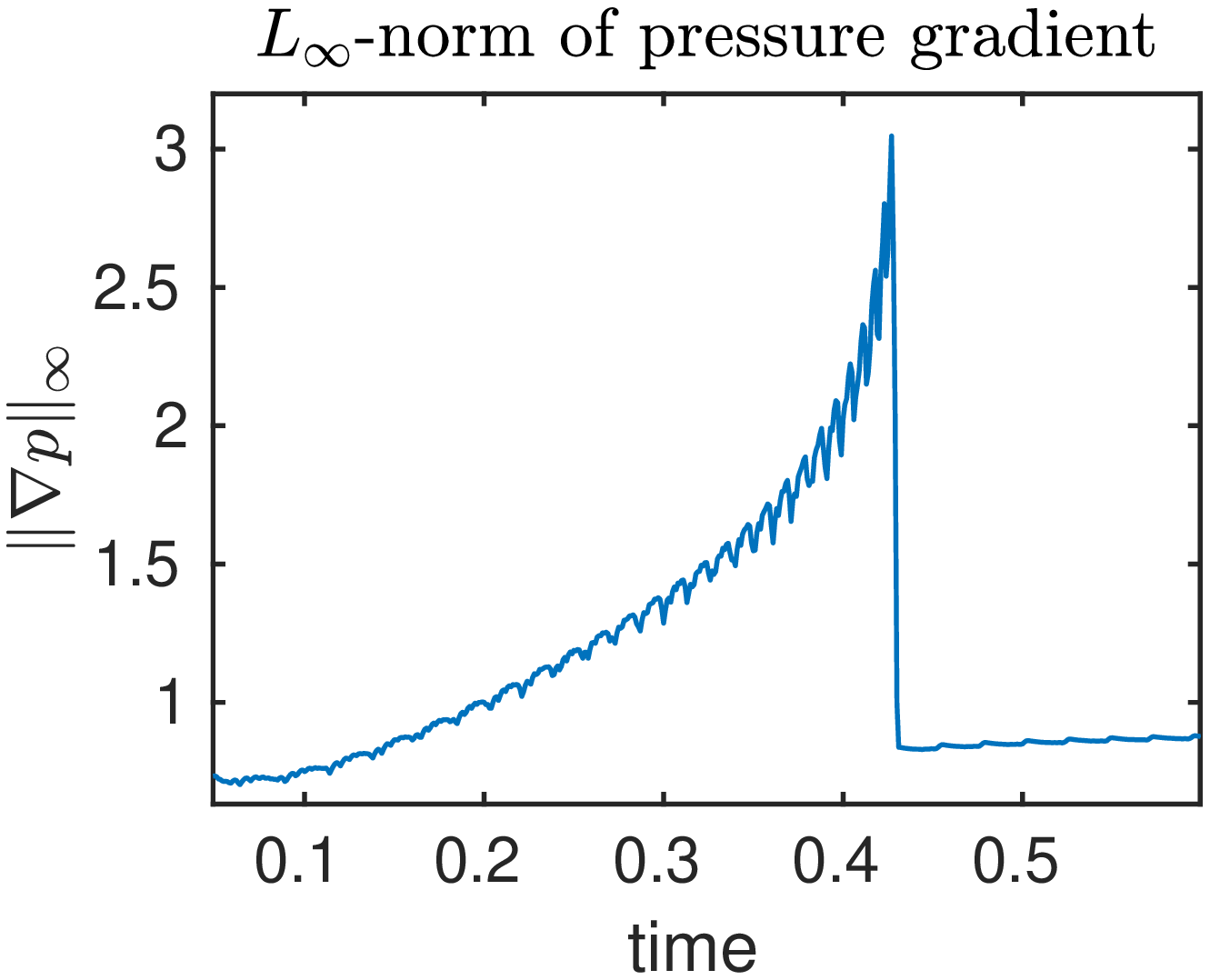}
	\end{subfigure}
	\caption{\textit{Focusing solution: pressure gradient norms.} Plot of the pressure gradient norms along time, from the left ot the right, from the top down, $L^2, L^4, L^6, L^8, L^{10}, L^\infty$-norm, with $\gamma=10$, $\Delta x=0.02$, $\Delta t=0.001$, $p_H=1$ and initial internal radius $0.6$.} \label{fig: gradients gamma 10 ph05}
 \end{figure}

\begin{figure}[hbt!]
	\centering
		\begin{subfigure}[b]{0.4\textwidth}
		\includegraphics[width=\textwidth]{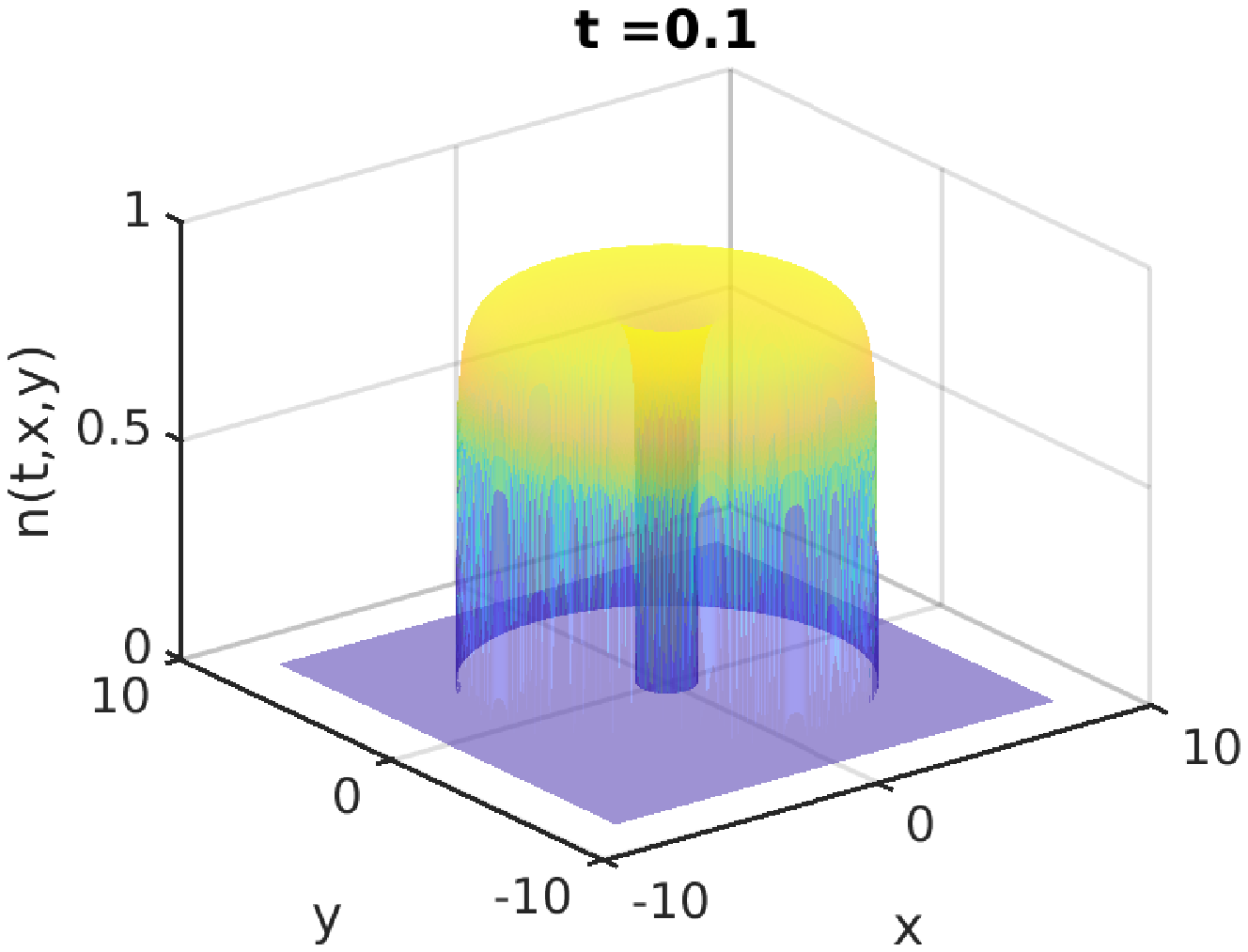}
	\end{subfigure}
		\begin{subfigure}[b]{0.4\textwidth}
		\includegraphics[width=\textwidth]{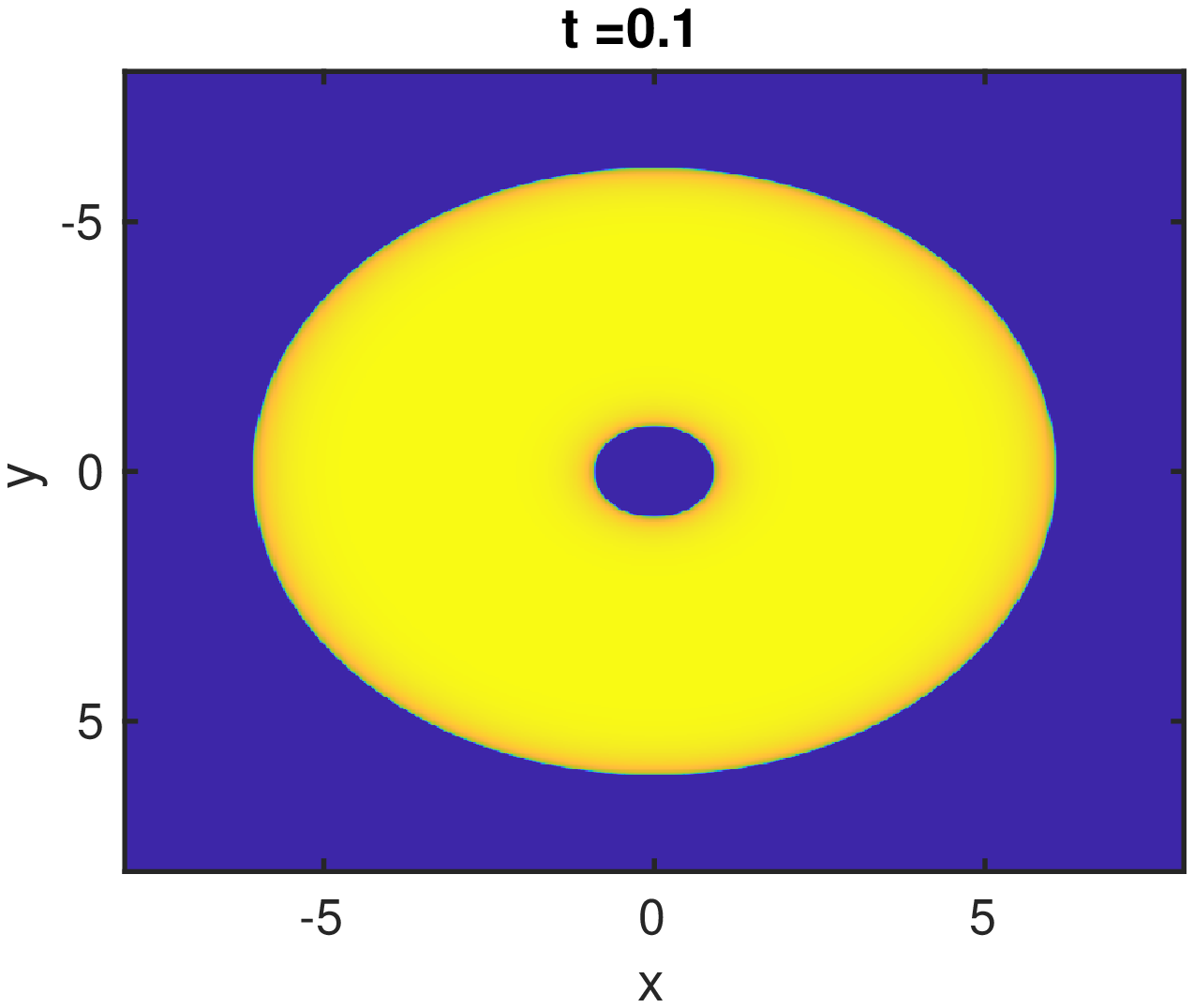}
	\end{subfigure}\\
	\begin{subfigure}[b]{0.4\textwidth}
		\includegraphics[width=\textwidth]{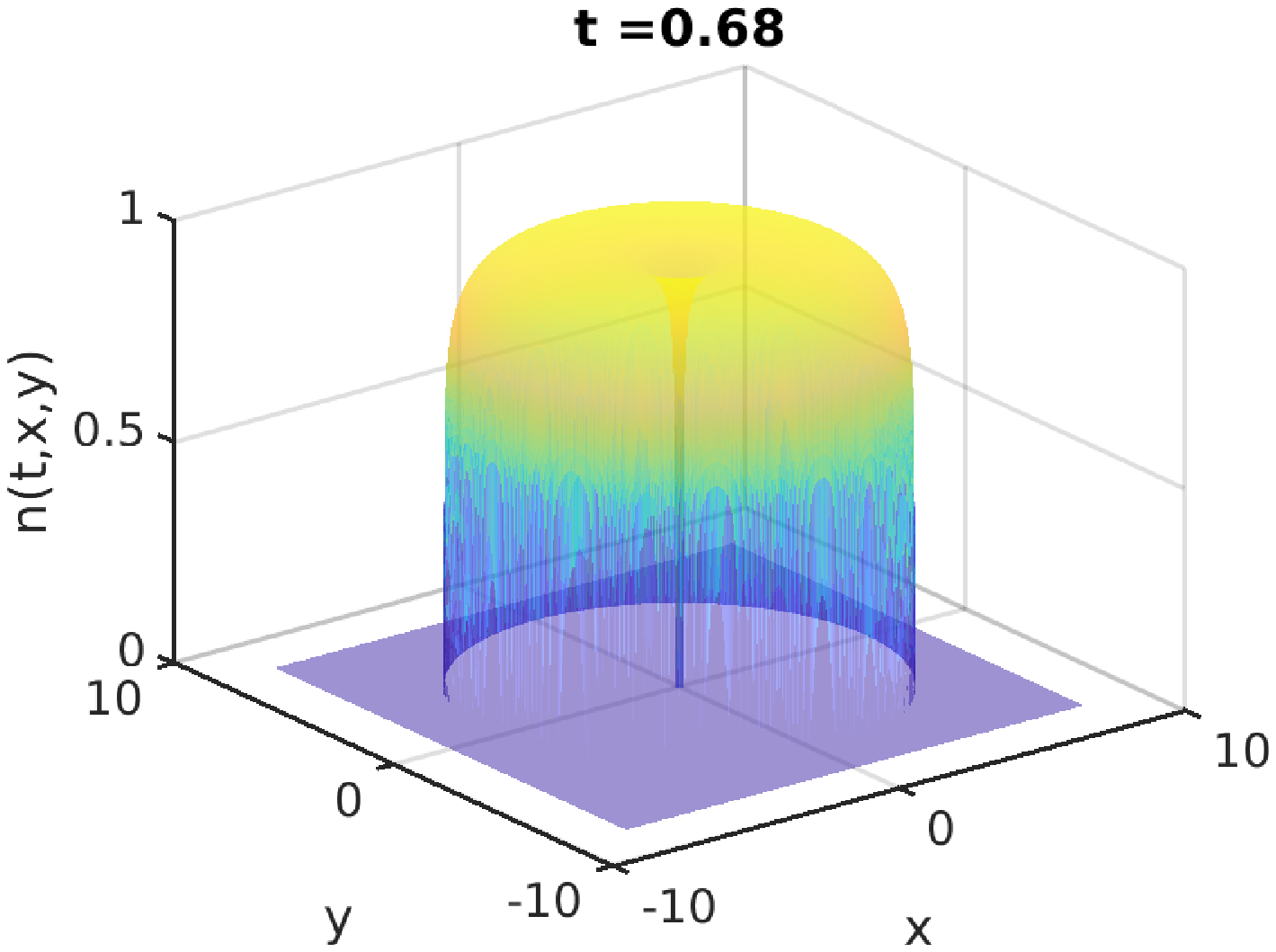}
	\end{subfigure}
		\begin{subfigure}[b]{0.4\textwidth}
		\includegraphics[width=\textwidth]{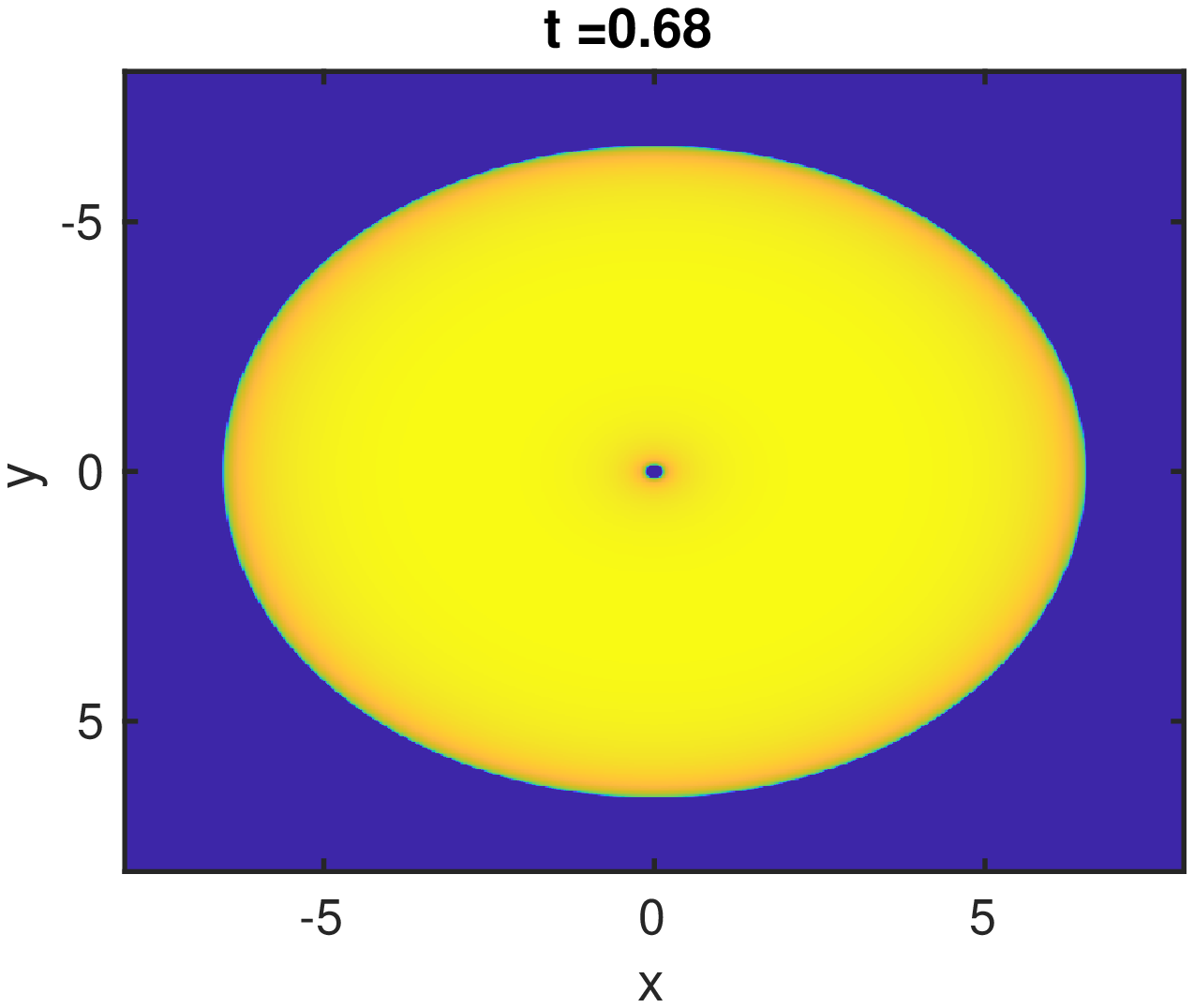}
	\end{subfigure}\\
	\begin{subfigure}[b]{0.4\textwidth}
		\includegraphics[width=\textwidth]{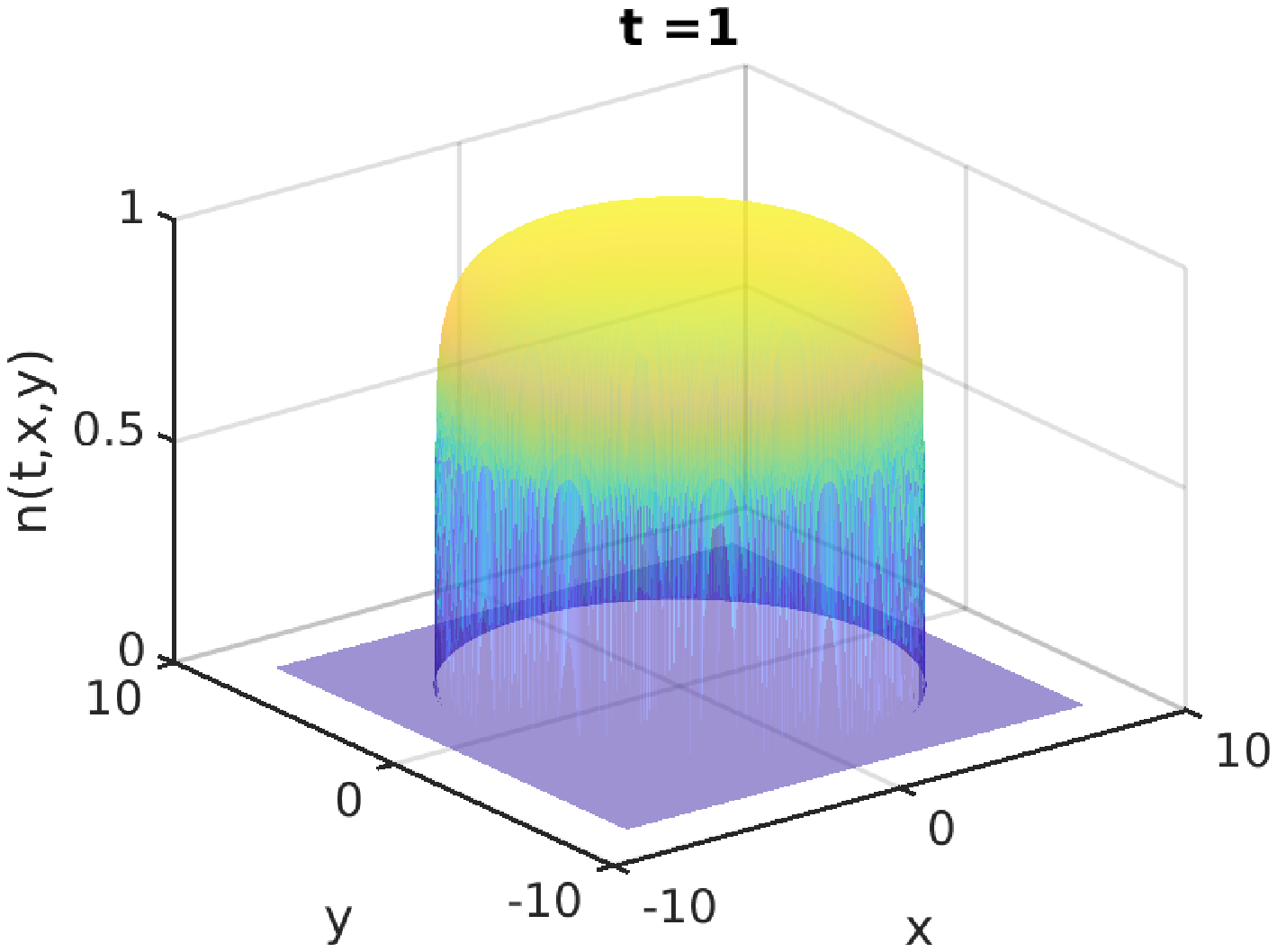}
	\end{subfigure}
		\begin{subfigure}[b]{0.4\textwidth}
		\includegraphics[width=\textwidth]{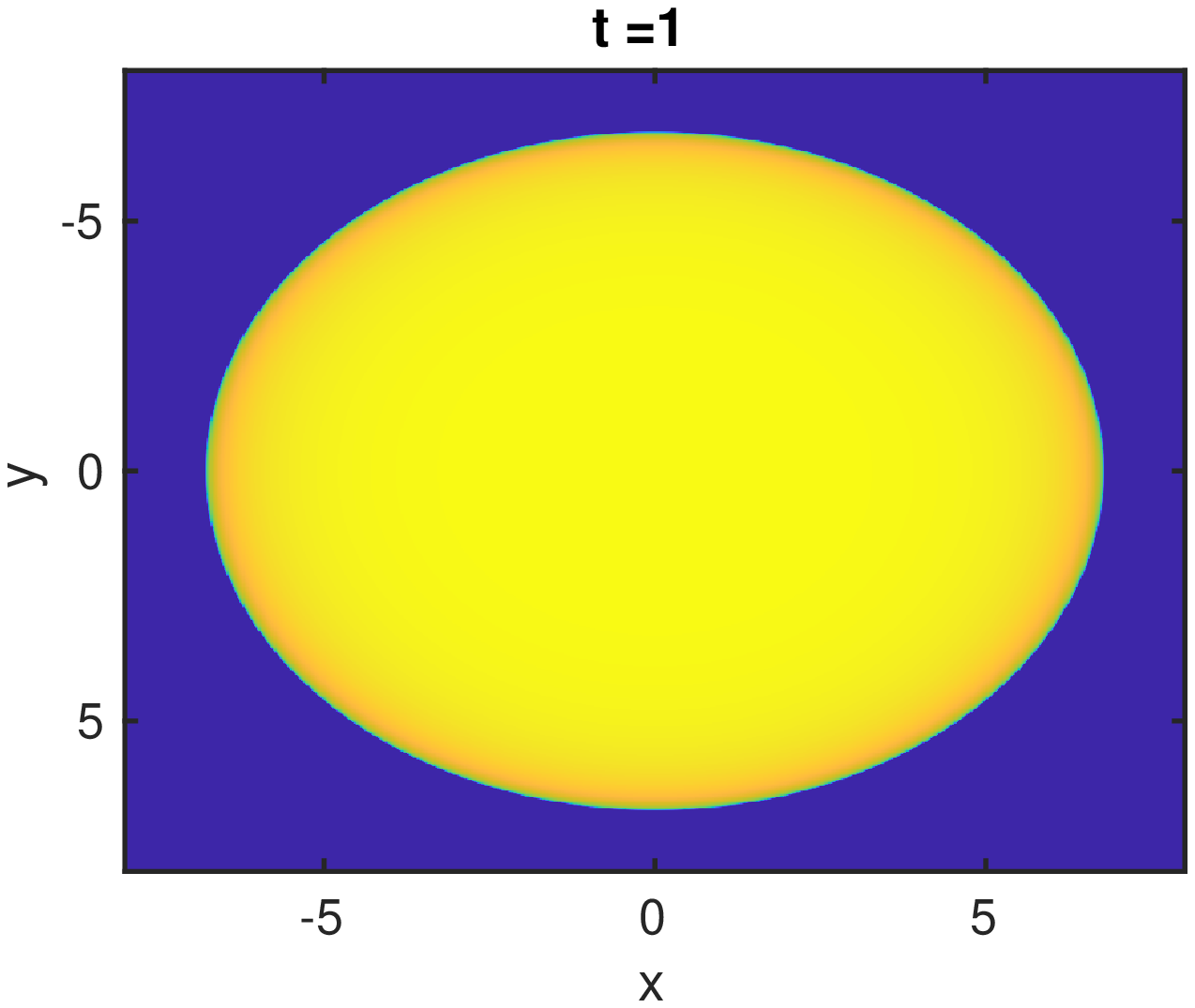}
	\end{subfigure}
	\caption{\textit{Focusing solution (density).} Numerical solution of the focusing problem with $\gamma=10$, $\Delta x=0.02$, initial internal radius $1$.} 
	\label{fig: focusing solution}
 \end{figure} 
 
\begin{figure}[hbt!]
	\centering
		\begin{subfigure}[b]{0.4\textwidth}
		\includegraphics[width=\textwidth]{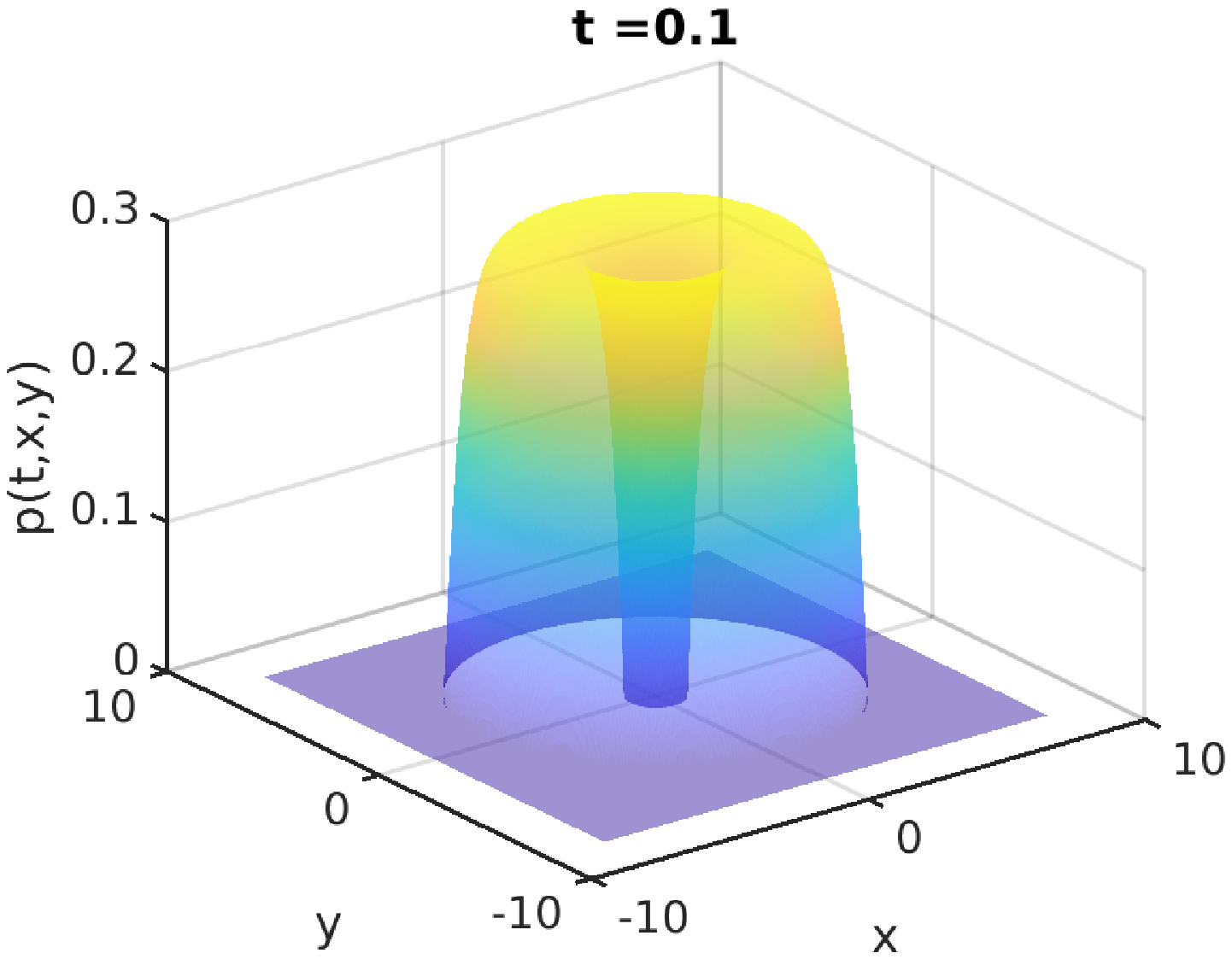}
	\end{subfigure}
		\begin{subfigure}[b]{0.4\textwidth}
		\includegraphics[width=\textwidth]{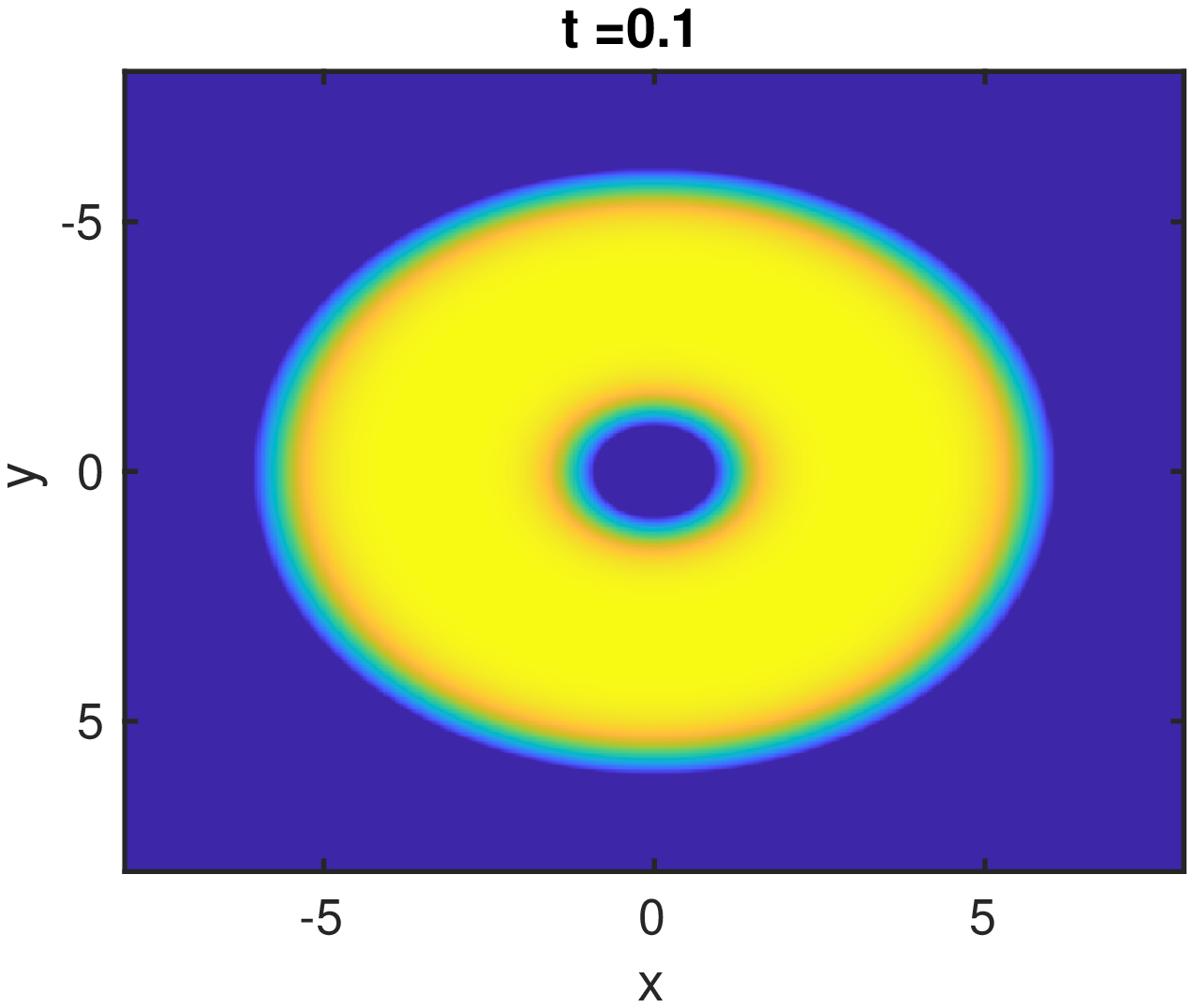}
	\end{subfigure}\\
	\begin{subfigure}[b]{0.4\textwidth}
		\includegraphics[width=\textwidth]{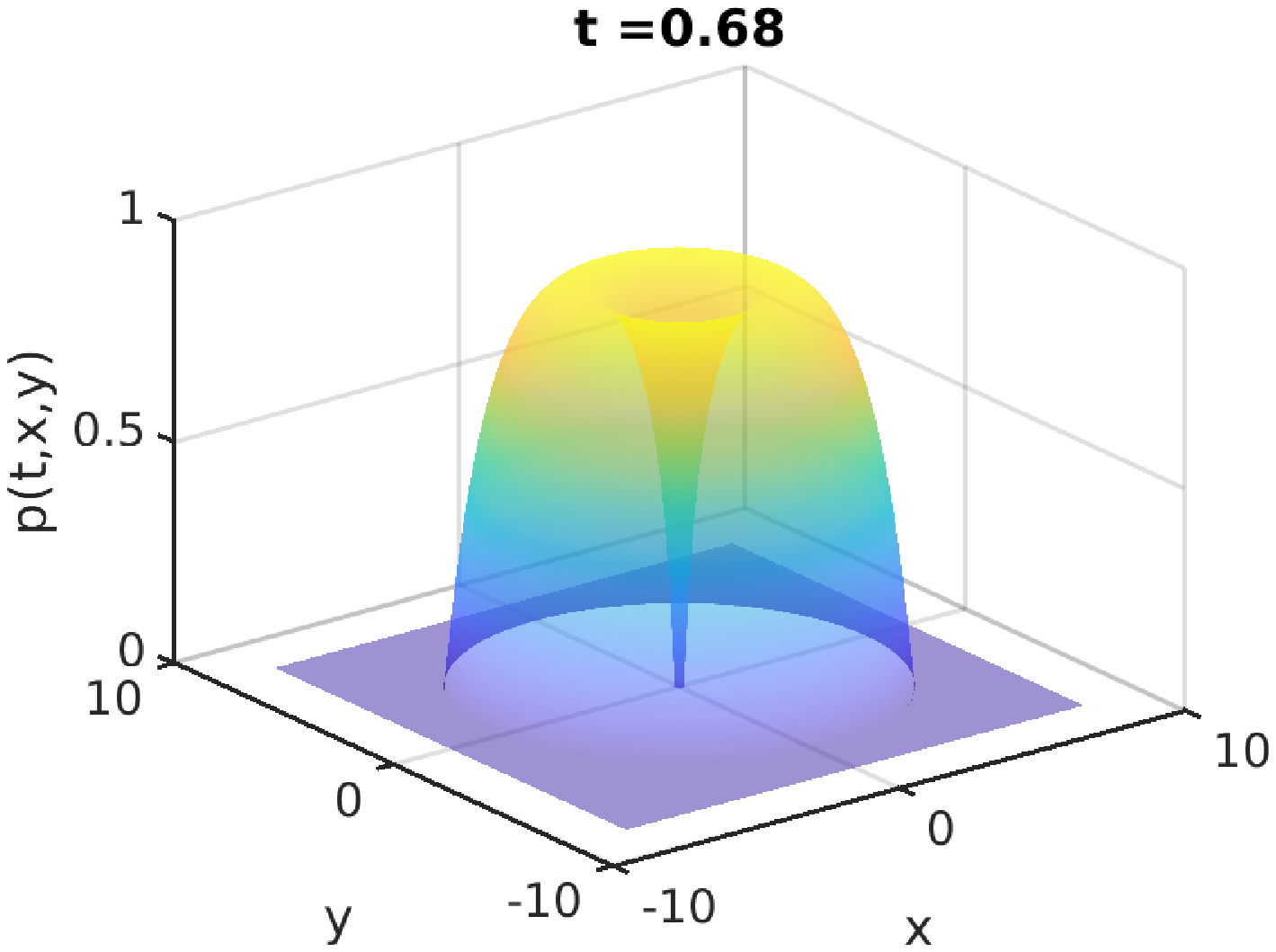}
	\end{subfigure}
		\begin{subfigure}[b]{0.4\textwidth}
		\includegraphics[width=\textwidth]{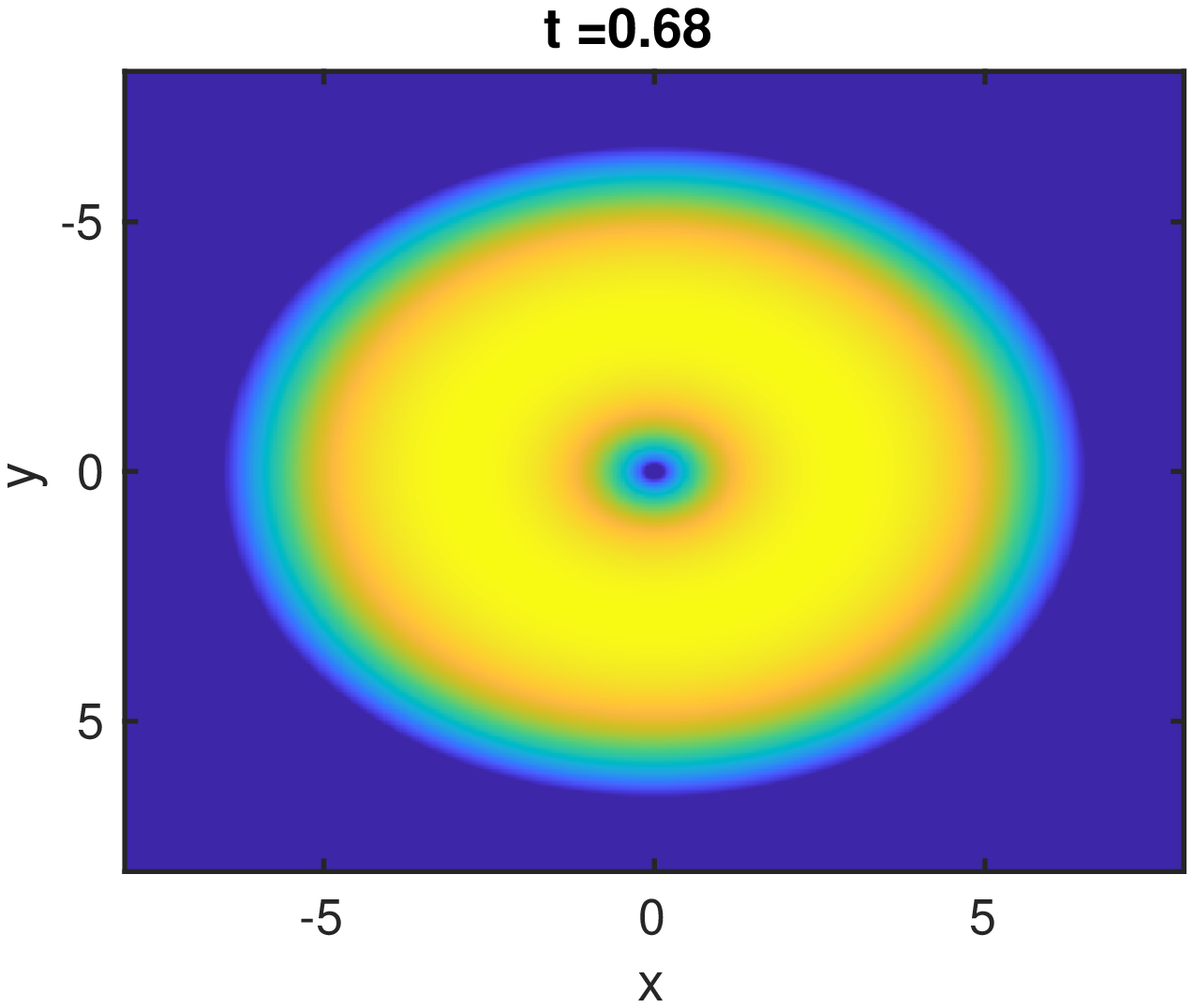}
	\end{subfigure}\\
	\begin{subfigure}[b]{0.4\textwidth}
		\includegraphics[width=\textwidth]{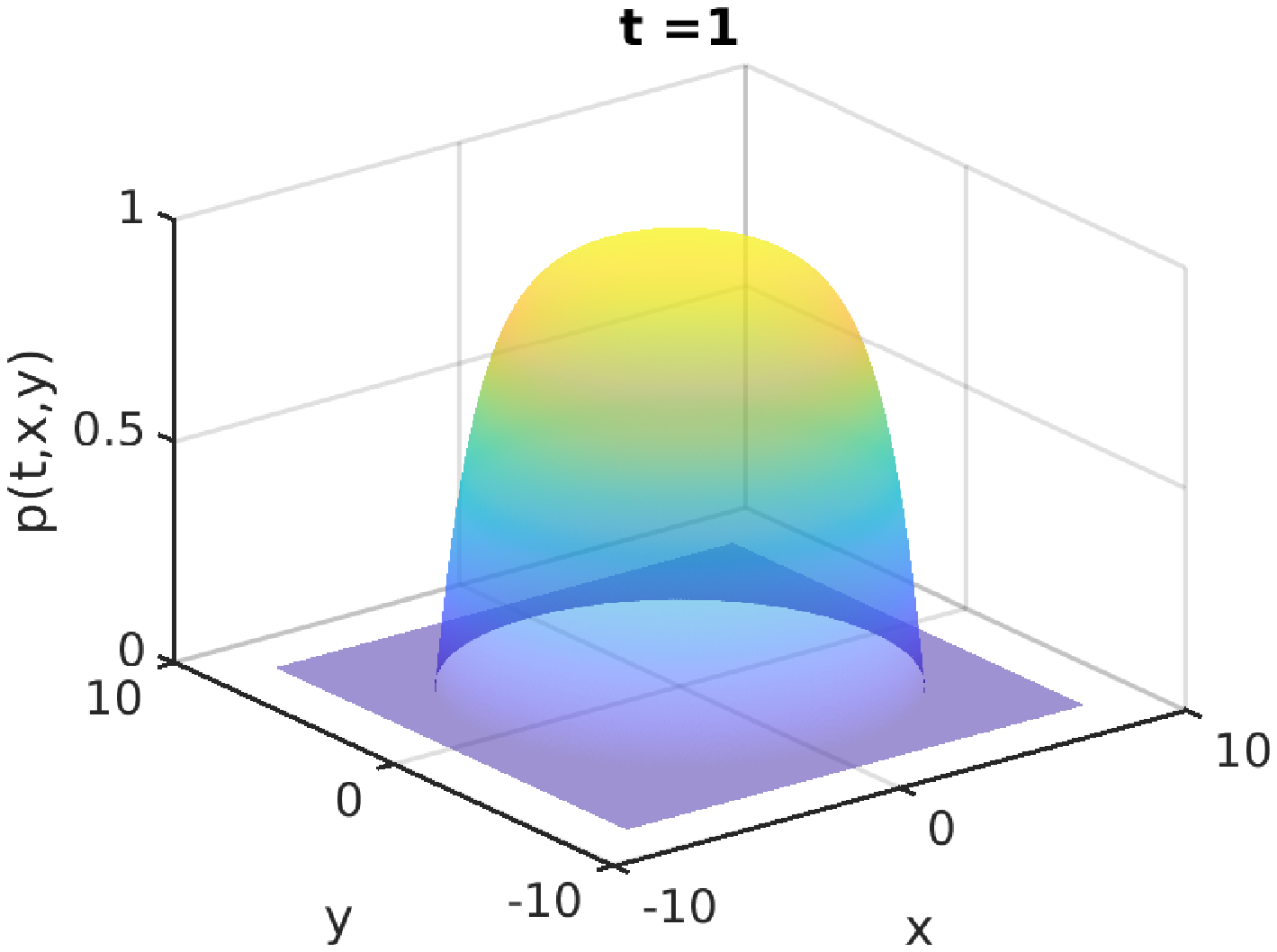}
	\end{subfigure}
		\begin{subfigure}[b]{0.4\textwidth}
		\includegraphics[width=\textwidth]{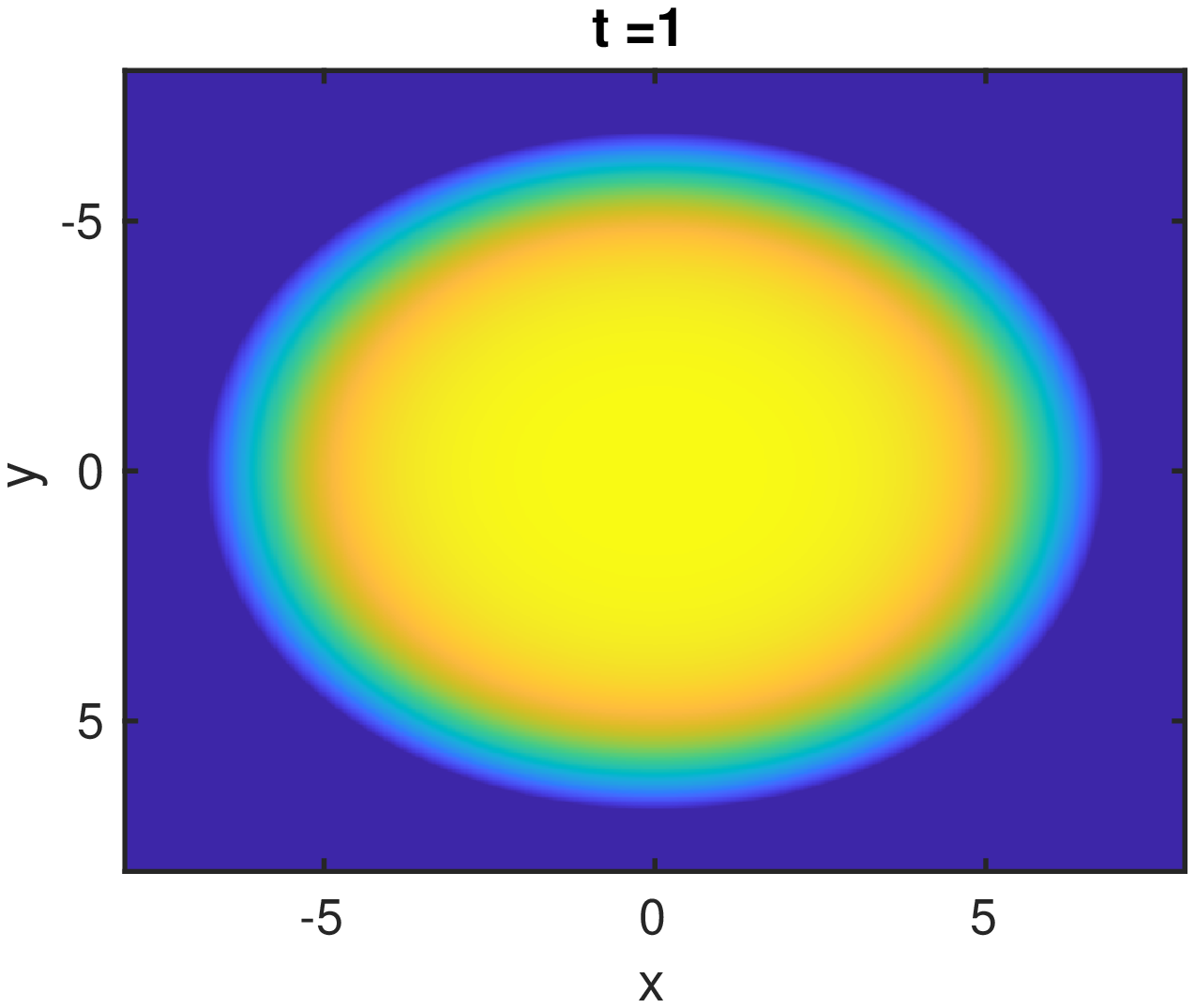}
	\end{subfigure}
	\caption{\textit{Focusing solution (pressure).} Numerical solution of the focusing problem with $\gamma=10$, $\Delta x=0.02$, initial internal radius $1$.} 
	\label{fig: focusing solution pressure}
 \end{figure} 
The Hele-Shaw problem in a spherical shell is defined by the following system
\begin{equation}\label{eq: HS focusing}
    \begin{dcases}
    -\Delta p_\infty = G(p_\infty) &\text{ in } \Omega(t),\\
   \quad  V \quad \,=-\partial_\nu p_\infty &\text { on } \partial \Omega(t),
    \end{dcases}
    \end{equation}
 where $\nu$ and $V$ denote the outward normal and the normal velocity of the free boundary, with 
    \begin{equation*}
   \Omega(t)=\{x; \ R_1(t) \leq|x|\leq R_2(t)\}.   
    \end{equation*}
In \cite{DP} the authors compute the asymptotic behaviour of the $L^p$-norms in space and time of the gradient of a radial solution, choosing for the sake of simplicity a constant reaction term and external radius $R_2(t)=R_2$ fixed. They show that the $L^p$-norms are \textit{uniformly} bounded (with respect to $\gamma$) if and only if $p\leq4$, which confirms that the uniform $L^4$ bound of the PME solution gradient is optimal.

We use our fully discrete scheme \eqref{scheme_implicit} in 2D to verify this interesting behaviour. We approximate the solution of system \eqref{eq: HS focusing}, taking $\gamma=10$, which is a value that well approximate the behaviour of the solution as $\gamma\rightarrow\infty$.

We take as computational domain $[-8,8]\times[-8,8]$ and $G(p)=1-p$. The initial data is given by
\begin{equation}\label{eq: initial data focu}
    n(x,y) = \begin{cases}
    0.8\qquad &\text{if} \quad 0.6< \sqrt{x^2+y^2} <6,\\
    0 \qquad &\text{otherwise}.
\end{cases}
\end{equation} 
The plots of the $L_x^q$-norms of $\grad p(t)$, with $q=2,4,6,8$, are displayed along time in Fig.~\ref{fig: gradients gamma 10 ph05}. We notice that at the focusing time, which is around $t=0.428$, the norms with exponent larger than $4$ develop a singularity.
We also present 3D plots of the solution and its pressure as time evolves, \cf Fig.~\ref{fig: focusing solution} and Fig.~\ref{fig: focusing solution pressure}. In order to better show the behaviour and the shape of the focusing solution, we choose to take a larger initial internal radius. Hence, we take it to be equal to 1 rather than 0.6 in Eq. \eqref{eq: initial data focu}.

\section{Conclusions}
We studied the properties of an upwind finite difference scheme for a mechanical model of tumor growth proving stability results which allowed us to infer the asymptotic preserving property of the scheme in the so-called incompressible limit.
We performed numerical simulations in order to investigate the sharpness of the $L^4$-uniform bound of the pressure gradient, using the focusing solution as limiting example.
 
The question of how to derive the Aronson-B\'enilan estimate for a fixed grid and $\gamma>1$ remains completely open and faces several technical difficulties, due to the stronger non-linearity of the equation. Moreover, as aforementioned, it could be of use in order to pass to the limit as $\Delta x\rightarrow 0$ in the semi-discrete scheme. Extending our approach on the Aronson-B\'enilan estimate to finite difference schemes for cross-reaction-diffusion systems of porous medium type could also represent a challenging problem.

\section*{Acknowledgements}
This project has received funding from the European Union's Horizon 2020 research and innovation program under the Marie Skłodowska-Curie (grant agreement No 754362).\\
The authors are grateful to Beno\^it Perthame for many valuable discussions during the preparation of this paper.
\begin{appendices}
\section{Proof of the solvability of \eqref{scheme_implicit_2} }\label{appendix_existence}
The following theorem, which is a generalization of \cite[Theorem~A.1]{AlmeidaBubbaPerthamePouchol} holds.
\begin{thm}
Denote $\bar{n}_i(t)$ and $\underline{n}_i(t)$ to be two solutions of the system of equations 
\begin{equation}\label{equation:n_proof}
	\frac{d n_i(t)}{dt} + (1-\alpha_i(t))n_i(t) - \nu \left[A(n_i(t),n_{i+1}(t)) - A(n_{i-1}(t),n_i(t)) \right]= N_i^k, \quad i\in I,
\end{equation}
where $A(n_i(t),n_{i+1}(t))$ is defined from \eqref{notation:A}, 
$\nu=\Delta t / \Delta x$, $\alpha_i(t)=\Delta tG(n_i^\gamma(t))$ and $\Delta t <1 / G(0)$,
with a super- and a sub-solution initial data, respectively, i.e. 
\begin{equation*}
    \bar{n}_i(0) = p_H^{\frac{1}{\gamma}}, \quad 
    \underline{n}_i(0) = 0.
\end{equation*}
Then we have \\
(i) $\bar{n}_i(t)$ and $\underline{n}_i(t)$ are nonnegative for all $t>0$ and $i\in I$.\\
(ii) $\bar{n}_i(t)$ and $\underline{n}_i(t)$  are  super- and  sub-solutions for all $t>0$ and $i\in I$.\\
(iii) $\bar{n}_i(t) \ge \underline{n}_i(t)$ for all $t>0$ and $i\in I$.\\
(iv) for any $i\in I$, both $\bar{n}_i(t)$ and $\underline{n}_i(t)$ converges to the same limit, which is the unique solution of  \eqref{scheme_implicit_2}.
\end{thm}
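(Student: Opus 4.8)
The plan is to run the classical method of sub- and supersolutions for the implicit step, viewing \eqref{equation:n_proof} as an artificial-time ODE flow whose equilibria are exactly the solutions of the fully implicit scheme \eqref{scheme_implicit_2}. Writing $\mathcal{L}[n]_i := (1-\Delta t\,G(n_i^\gamma))\,n_i - \nu\bigl(A(n_i,n_{i+1})-A(n_{i-1},n_i)\bigr)$, the system \eqref{equation:n_proof} reads $\tfrac{d}{dt}n_i = N_i^k - \mathcal{L}[n]_i$, so that a supersolution of \eqref{scheme_implicit_2} is precisely an $m$ with $\mathcal{L}[m]_i\ge N_i^k$, i.e. $\tfrac{d}{dt}m_i\le 0$ along the flow, while a subsolution satisfies the reverse inequalities. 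Since the right-hand side is locally Lipschitz in $n$ on the relevant range, local existence of $\bar n_i(t)$ and $\underline n_i(t)$ is immediate, and global existence will follow from the a priori bound of step (i).

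First I would prove (i) together with the $L^\infty$ bound by showing that the box $[0,p_H^{1/\gamma}]^I$ is forward invariant. On the face $\{n_j=0\}$ (with all $n_i\ge 0$), the monotonicities $\partial_1 A\le 0$, $\partial_2 A\ge 0$ give $A(0,n_{j+1})\ge 0$ and $A(n_{j-1},0)\le 0$, whence $\tfrac{d}{dt}n_j\ge N_j^k\ge 0$; on the face $\{n_j=p_H^{1/\gamma}\}$ (with all $n_i\le p_H^{1/\gamma}$) the same sign analysis of the fluxes, together with $G(p_H)=0$ and $N_j^k\le p_H^{1/\gamma}$, yields $\tfrac{d}{dt}n_j\le 0$. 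Hence no trajectory starting in the box can leave it, which gives nonnegativity, the uniform bound, and global existence. In particular $\bar n_i(0)=p_H^{1/\gamma}$ is a supersolution and $\underline n_i(0)=0$ is a subsolution, since $\mathcal{L}[\bar n(0)]_i=p_H^{1/\gamma}\ge N_i^k$ and $\mathcal{L}[\underline n(0)]_i=0\le N_i^k$.

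The heart of the argument is (ii) and (iii), which I would obtain from a comparison principle for the linearization of $\mathcal{L}$. Differentiating the flow in the artificial time and setting $w_i:=\tfrac{d}{dt}\bar n_i$ gives a linear system $\tfrac{d}{dt}w_i=-a_i w_i + b_i w_{i-1}+c_i w_{i+1}$, with $a_i=\partial_{n_i}\mathcal{L}[\bar n]_i$, $b_i=-\partial_{n_{i-1}}\mathcal{L}[\bar n]_i$ and $c_i=-\partial_{n_{i+1}}\mathcal{L}[\bar n]_i$. The point is that this system is cooperative with a strictly positive diagonal: $\partial_2 A\ge 0$ and $\partial_1 A\le 0$ give $b_i,c_i\ge 0$, while the bound $\Delta t<1/G(0)$, the fact that $\tfrac{d}{dn}G(n^\gamma)=\gamma n^{\gamma-1}G'(n^\gamma)\le 0$, and $-\nu(\partial_1 A-\partial_2 A)\ge 0$ together give $a_i\ge 1-\Delta t\,G(0)>0$. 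A standard maximum-principle argument---inspecting the first index and time at which $\max_i w_i$ would reach $0$ from below---then propagates the initial sign $w_i(0)\le 0$ to all $t$, which is exactly the statement that $\bar n_i$ stays a supersolution; the symmetric computation gives $\tfrac{d}{dt}\underline n_i\ge 0$. For (iii) I would subtract the two equations and apply the same principle to $d_i:=\bar n_i-\underline n_i$: by the mean value theorem $\mathcal{L}[\bar n]_i-\mathcal{L}[\underline n]_i$ inherits the same cooperative, diagonally dominant structure at intermediate states, so $d_i(0)\ge 0$ forces $d_i(t)\ge 0$.

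Finally, for (iv), the monotonicity from (ii) and the uniform bounds from (i) show that $\bar n_i(t)$ decreases to a limit $\bar n_i^\infty$ and $\underline n_i(t)$ increases to a limit $\underline n_i^\infty$ as $t\to\infty$; since the vector field of \eqref{equation:n_proof} is continuous and the system is finite-dimensional, both limits are equilibria, hence solutions of \eqref{scheme_implicit_2}, with $\underline n_i^\infty\le \bar n_i^\infty$ by (iii). To collapse the two limits I would invoke the $L^1$-contraction \eqref{L1-contraction} already proven: applied to two solutions of the implicit step sharing the same right-hand side $N^k$, it forces them to coincide, so the equilibrium is unique and $\bar n_i^\infty=\underline n_i^\infty$. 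I expect the main obstacle to be pinning down the cooperative, positive-diagonal structure of the linearized operator in (ii)--(iii)---in particular verifying that $\Delta t<1/G(0)$, the monotonicity of $G$, and the signs of the derivatives of $A$ jointly keep the diagonal coefficient strictly positive---since steps (i), (ii) and (iv) all reduce to this comparison principle once it is established.
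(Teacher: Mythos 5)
Your proposal is correct and follows essentially the same strategy as the paper's Appendix A proof: an artificial-time flow started from the constant super- and sub-solutions, with nonnegativity and the preservation of the super/sub-solution property obtained from the cooperative structure ($\partial_1 A\le 0$, $\partial_2 A\ge 0$, $G'\le 0$, $\Delta t\,G(0)<1$) via first-crossing/sign-preservation arguments. The only (harmless) deviations are in (iii), where you run a cooperative-ODE comparison on $\bar n_i-\underline n_i$ while the paper compares the two stationary operators at each fixed time and sums a Kato-type inequality $\sum_i(1-\bar\alpha_i+\beta_i)w_i^-\le 0$, and in (iv), where you invoke the $L^1$-contraction for uniqueness instead of the paper's direct summation of the linearized fixed-point relation.
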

\begin{proof} 
(i) We prove the case of supersolution.
The proof for the case of subsolution is similar. Consider the moment $t^*$ when $\bar{n}_i(t)$ first reach 0 for some $i_0$, i.e. 
$\bar{n}_{i_0}(t^*)=0$ while $\bar{n}_i(t^*) \ge 0$ for all $i\neq i_0$, then 
$A(\bar{n}_{i_0}(t),\bar{n}_{i_0+1}(t))\ge0$, $A(\bar{n}_{i_0-1}(t),\bar{n}_{i_0}(t))\le0$ and thus 
\begin{equation*}
	\frac{d \bar{n}_{i_0}(t^*)}{dt}  = \nu \left[A(\bar{n}_{i_0}(t),\bar{n}_{i_0+1}(t)) - A(\bar{n}_{i_0-1}(t),\bar{n}_{i_0}(t)) \right] + N_{i_0}^k \ge0
\end{equation*}
 via the evolution equation \eqref{equation:n_proof}. 
 As a result, $\bar{n}_i(t)$ can't change signs and thus remain nonnegative for all $t\ge0$. 

(ii) Here we prove the case of subsolution. The proof for the case of supersolution is similar. 
Denote 
\begin{equation*}
	\underline{z}_i(t) = \frac{d\underline{n}_i(t)}{dt}, \quad 
	\underline\alpha_i(t)=\Delta tG(\underline{n}_i^\gamma(t)), \quad
	\underline{A}_{i+\frac12}(t) = A(\underline{n}_i(t), \underline{n}_{i+1}(t)),
\end{equation*}
then $\underline{z}_i(0)\ge0$ for all $i$ since $\underline{n}_i(0)$ is a subsolution. Differentiating \eqref{equation:n_proof}, we get 
\begin{align*}
	&\frac{d \underline{z}_i(t)}{dt} + (1-\underline{\alpha}_i(t))\underline{z}_i(t) -\underline{\alpha}_i'(t)\underline{n}_i(t) 
	- \nu \left[\partial_1\underline{A}_{i+\frac12} - \partial_2 \underline{A}_{i-\frac12} \right]\underline{z}_i(t)\\ 
	=& \nu \partial_2 \underline{A}_{i+\frac12}\underline{z}_{i+1}(t) - \nu \partial_1 \underline{A}_{i-\frac12}\underline{z}_{i-1}(t) .
\end{align*}
Noticing that $\underline{\alpha}_i'(t)=0$ when $\underline{z}_i(t)=0$, the function $\underline{z}_i(t)$ can't change signs following a similar argument as in (i), which implies that $\underline{z}_i(t)\ge0$ for all $t\ge0$.
Then combining with \eqref{equation:n_proof}, we have that
\begin{equation*}
	 (1-\underline\alpha_i(t))\underline{n}_i(t) - \nu \left[\underline{A}_{i+\frac12}(t) - \underline{A}_{i-\frac12}(t) \right] \le N_i^k, \text{ for all } t\ge0,
\end{equation*}
which shows that $\underline{n}_i(t)$ is always a subsolution. 

(iii) Denote $w_i(t) = \bar{n}_i(t) - \underline{n}_i(t)$, then initially we have $w_i(0)\ge0$ for all $i$. 
We wish to show that $w_i(t)\ge0$ for all $t\ge0$ and $i\in I$. 
For simplicity of notations, we denote 
\begin{equation*}
	\bar{\alpha}_i(t) = \Delta t G(\bar{n}_i^\gamma(t)), \quad
	\underline{\alpha}_i(t) = \Delta t G(\underline{n}_i^\gamma(t)).
\end{equation*}
Noticing \eqref{eq: assumptions G} and the fact that both $\bar{n}_i(t)$ and $\underline{n}_i(t)$ are nonnegative, 
when $\Delta t<1/G(0)$, we have $\bar{\alpha}_i(t) \le 1$ and $\underline{\alpha}_i(t) \le 1$.
A direct computation shows that 
\begin{align*}
	(1-\bar{\alpha}_i(t)) \bar{n}_i(t) - (1-\underline{\alpha}_i(t)) \underline{n}_i(t) 
	& =
	(1-\bar{\alpha}_i(t)) w_i(t) + (\underline{\alpha}_i(t)-\bar{\alpha}_i(t)) \underline{n}_i(t) \\
	& = (1-\bar{\alpha}_i(t) + \beta_i(t)) w_i(t),
\end{align*}
where $\beta_i(t) = -\Delta t G'(\eta_i^\gamma(t)) \gamma\eta_i^{\gamma-1}(t)\underline{n}_i(t)\ge 0$ for some nonnegative $\eta_i(t)$ between $\bar{n}_i(t)$ and $\underline{n}_i(t)$.
By \eqref{equation:n_proof} and the fact that  $\bar{n}_i(t)$ and $\underline{n}_i(t)$ are  super- and  subsolutions, we have
\begin{align*}
	(1-\bar{\alpha}_i(t) + \beta_i(t)) w_i(t) 
	&-\nu\left[A_{i+\frac12}(\bar{n}_i, \bar{n}_{i+1})-A_{i-\frac12}(\bar{n}_{i-1}, \bar{n}_i)\right]\\
	&+\nu\left[A_{i+\frac12}(\underline{n}_i, \underline{n}_{i+1})-A_{i-\frac12}(\underline{n}_{i-1}, \underline{n}_i)\right]
	\ge 0. 
\end{align*}
Combining the above inequality with the following expression 
\begin{equation*}
	A_{i+\frac12}(\bar{n}_i, \bar{n}_{i+1}) - A_{i+\frac12}(\underline{n}_i, \underline{n}_{i+1}) 
	= 
	\partial_1 A_{i+\frac12}(\xi_i, \bar{n}_{i+1}) w_i + \partial_2 A_{i+\frac12}(\underline{n}_i, \eta_{i+1}) w_{i+1},
\end{equation*}
where $\partial_1 A_{i+\frac12}(\xi_i, \bar{n}_{i+1})\le0$ with some $\xi_i$ between $\bar{n}_i$ and $\underline{n}_i$ and $\partial_2 A_{i+\frac12}(\underline{n}_i, \eta_{i+1})\ge0$ with some $\eta_{i+1}$ between $\bar{n}_{i+1}$ and $\underline{n}_{i+1}$,  we have
\begin{align*}
	&\left[1-\bar{\alpha}_i(t) + \beta_i(t) - \nu \left(\partial_1 A_{i+\frac12}(\xi_i, \bar{n}_{i+1}) - \partial_2 A_{i-\frac12}(\underline{n}_{i-1}, \eta_i)\right)\right] w_i(t) \\
	& - \nu \partial_2 A_{i+\frac12}(\underline{n}_i, \eta_{i+1}) w_{i+1}(t) 
	+ \nu \partial_1 A_{i-\frac12}(\xi_{i-1}, \underline{n}_i) w_{i-1}(t) \ge 0.
\end{align*} 
Multiplying both sides by $\mathds{1}_{\{w_i<0\}}$ and summing over $i$, we get
\begin{align*}
	&- \sum_i (1-\bar{\alpha}_i(t) + \beta_i(t))w_i^- 
	+ I_1 + I_2 \ge0,
\end{align*}
where $w_i^- = \max\{ -w_i, 0\}$ and 
\begin{align*}
&I_1 = \nu \sum_i  \partial_2 A_{i-\frac12}(\underline{n}_{i-1}, \eta_i) w_i(\mathds{1}_{\{w_i<0\}} - \mathds{1}_{\{w_{i-1}<0\}}),\\
&I_2 = -\nu \sum_i  \partial_1 A_{i+\frac12}(\xi_{i}, \underline{n}_{i+1}) w_i(\mathds{1}_{\{w_i<0\}} - \mathds{1}_{\{w_{i+1}<0\}}).
\end{align*}
It is worth noticing that 
\begin{equation*}
	w_i(\mathds{1}_{\{w_i<0\}} - \mathds{1}_{\{w_{i\pm1}<0\}}) \le0,
\end{equation*}
which implies that $I_1\le0$, $I_2\le0$ and further
\begin{equation}\label{proof:w_sign2}
	 \sum_i (1-\bar{\alpha}_i(t) + \beta_i(t))w_i^- \le 0.
\end{equation}
It is easy to see from \eqref{proof:w_sign2} that we must have 
$w_i^-(t)\equiv 0$, i.e. $w_i(t)\ge0$ for all $t>0$.

\textit{(iv)} The monotonicity of $\bar{n}_i(t)$ and $\underline{n}_i(t)$ indicates that there exist the limits 
\begin{equation*}
	\bar{N}_i = \lim_{t\to\infty} \bar{n}_i(t), \quad 
	\underline{N}_i = \lim_{t\to\infty} \underline{n}_i(t).
\end{equation*}
Denote $W_i = \bar{N}_i - \underline{N}_i$, we can show that 
\begin{align*}
	&\left[1-\Delta t G(\bar{N}_i^\gamma) + \beta_i(t) - \nu \left(\partial_1 A_{i+\frac12}(\xi_i, \bar{N}_{i+1}) - \partial_2 A_{i-\frac12}(\underline{N}_{i-1}, \eta_i)\right)\right] W_i \\
	& - \nu \partial_2 A_{i+\frac12}(\underline{N}_i, \eta_{i+1}) W_{i+1}+ \nu \partial_1 A_{i-\frac12}(\xi_{i-1}, \underline{N}_i) W_{i-1} = 0,
\end{align*}
for some $\xi_i$'s and $\eta_i$'s. 
Summing over all $i$, we have 
\begin{equation*}
	\sum_i \left[1-\Delta t G(\bar{N}_i^\gamma) + \beta_i(t) \right] W_i = 0.
\end{equation*}
Noticing that $W_i\ge0$  and $1-\Delta t G(\bar{N}_i^\gamma)+ \beta_i(t)>0$, we have $W_i = 0$ for all $i\in I$. 
In other words, for each $i$, there is a unique limit of $\bar{n}_i(t)$ and $\underline{n}_i(t)$ as $t\to\infty$, which is $N_i^{k+1}$, the unique solution of \eqref{scheme_implicit_2}. 

\end{proof}
\end{appendices}

\bibliographystyle{abbrv}
\bibliography{biblio}

\end{document}